\newcommand{\R}{{\mathbb R}}
\newcommand{\Z}{{\mathbb Z}}
\newcommand{\C}{{\mathbb C}}
\newcommand{\E}{{\mathcal E}}
\newcommand{\be}{\begin{eqnarray}}
\newcommand{\ben}{\begin{eqnarray*}}
\newcommand{\en}{\end{eqnarray}}
\newcommand{\enn}{\end{eqnarray*}}
\newcommand{\ba}{\backslash}
\newcommand{\pa}{\partial}
\newcommand{\ov}{\overline}
\newcommand{\G}{\Gamma}
\newcommand{\vep}{\varepsilon}
\newcommand{\Om}{\Omega}
\newcommand{\om}{\omega}
\newcommand{\sig}{\sigma}
\newcommand{\n}{\noindent}
\newtheorem{thm}{Theorem}[section]
\newtheorem{lem}{Lemma}[section]
\theoremstyle{definition}
\newtheorem{defn}{Definition}[section]
\theoremstyle{remark}
\newtheorem{rem}{Remark}[section]
\numberwithin{equation}{section}
\title{\bf Well-posedness and convergence analysis of PML method for time-dependent acoustic scattering problems over a locally rough surface}
\author{ Hongxia Guo\thanks{School of Mathematical Sciences and LPMC, Nankai University, 300071 Tianjin, China. ({\tt hxguo@nankai.edu.cn})}  \and Guanghui Hu\thanks{(Corresponding author) School of Mathematical Sciences and LPMC, Nankai University, 300071 Tianjin, China. ({\tt ghhu@nankai.edu.cn})}
}
\date{} 
\begin{document}
\maketitle

\begin{abstract}
  We aim to analyze and calculate time-dependent acoustic wave scattering by a bounded obstacle and a locally perturbed non-selfintersecting curve. The scattering problem is  equivalently reformulated as an initial-boundary value problem of the wave equation in a truncated bounded domain through a well-defined transparent boundary condition.
Well-posedness and stability of the reduced problem are established. Numerically, we adopt the perfect matched layer (PML) scheme for simulating the propagation of perturbed waves.
By designing a special absorbing medium in a semi-circular PML, we show well-posedness and stability of the truncated initial-boundary value problem. Finally, we prove that the PML solution converges exponentially to the exact solution in the physical domain. Numerical results are reported to verify the exponential convergence with respect to absorbing medium parameters and  thickness of the PML.

\vspace{.2in} {\bf Keywords: wave equation, well-posedness, PML, convergence.
}

\end{abstract}

\section{Introduction}

The scattering problems over a half-space with local perturbations have widely considered in the fields of radar techniques, sonar, ocean surface detection, medical detection, geophysics, outdoor sound propagation and so on. Such problems are also referred to as cavity scattering problems in the literature; see e.g. \cite{Lidtn, Wood, Amm00, Amm03} where variational and integral equation methods (see also \cite{Amm00, Amm03})
were adopted to reduce the unbounded physical domain to a truncated computational domain in the time-harmonic regime. In this paper we concern the time-dependent scattering problems governed by wave equations.

If the domain of the wave equations is unbounded, one can either use
 transparent/absorbing boundary conditions  to minimize the spurious reflections or absorbing boundary layers, which are usually referred to perfectly matched layers (PML),  to bound the unbounded physical domain by truncated computational domain in the numerical simulation.
 A major challenge is to construct the temporal dependence of the transparent boundary condition \cite{Nedelec} or the artificially designed absorbing medium (see e.g.,\cite{Joly06}) in the PML method.
The PML scheme is initially introduced by B\'{e}renger for 2D and 3D Maxwell equations \cite{Ber94, Ber96}. The basic idea of the PML is to surround the physically computational domain by some {\rm artificial} medium that absorbs outgoing waves effectively.
Mathematically, a PML layer method can be equivalently formulated as a complex stretching of the external domain.
Such a feature makes PML an effective for modeling a variety of wave phenomena \cite{Bram, Chew94, Coll, Tur}.
Due to its barely reflective absorption of out going waves,  PML turns out to be very popular for simulating the propagation of waves in time domain \cite{hoop, Joly03, Joly12}.

For time-harmonic scattering problems,
the PML formulation was introduced in \cite{Cheny} to locally perturbed rough surface scattering problems. We also refer the readers to  \cite{Chen10-fre, Hohage, Lassas} for the  analysis of acoustic scattering problem in the whole space and to \cite{Bram08, Bao05}
for electromagnetic scattering problems where the convergence rate depends exponentially on the absorption parameter and thickness of PML layer.
In theory it is crucial to investigate well-posedness, stability, convergence of the PML formulation. This paper is concerned with the mathematical analysis  and numerical simulation of the time-dependent acoustic scattering problem in a locally perturbed half space with the following issues:
\begin{description}
\item[(1)] well-posedeness and stability of the time-dependent problem using the Dirichlet-to-Neumann (DtN) operator;
\item[(2)] well-posedness and long-time stability of the PML formulation in a truncated domain;
\item[(3)] convergence of the solution of the PML formulation to that of the original problem;
\item[(4)] numerical tests of the exponential convergence of the PML method.
\end{description}



To the best of our knowledge, the mathematical investigation of the convergence/error analysis of the PML problem for wave equations is far from being complete, in comparision with the vast works for time-harmonic scattering problems.
Existing results mainly concern the well-posedness and stability of PML problem; see e.g., \cite{appelo06, joly02,joly03,Bramble} where the absorption parameter was all assumed to be a constant.
Using Laplace transform and the transparent boundary conditions (TBC),
the exponential convergence with respect to the thickness and the absorbing parameter has been justified in \cite{Chen09,Chen12} for time-dependent acoustic scattering problems in the whole space. Later the approach of applying Laplace transform \cite{Chen09,Chen12} has been extended to the cases of waveguides \cite{Becache21}, periodic structures as well as electromagnetic scattering problems in the whole space \cite{Wei20}. See
also \cite{bao18, li15, wei19} for the analysis of the time-dependent fluid-solid interaction problems and electromagnetic scattering problems.
Nevertheless, the exponential convergence results in the aforementioned literatures are not confirmed by numerical examples. On the other hand, as far as we know, a comprehensive analysis is still missing for the PML method to the acoustic wave equation in a locally perturbed half space.

In this work, a perturbation of the half plane $\{x: x_2>0\}$ can be caused by either a bounded obstacle imbedded in the background medium or a compact change of the unbounded curve $x_2=0$; see the geometry shown in Figure
\ref{fig1}. Firstly, we adopt the approach of \cite{Chen09}
to prove well-posedness of the scattering  problem in proper time-dependent Sobolev spaces by using a well-defined TBC (Dirichlet-to-Neumann operator). We complement the earlier work \cite{Chen09} by describing mapping properties of the DtN operator and by connecting the TBCs defined over a finite and an infinite time period, which seem not well-addressed in the literature. 
Motivated by \cite{Chen09}, a circular PML layer with special medium properties will then be defined to truncate the original problem.   
A first order symmetric hyperbolic system is derived for the truncated PML problem, which is similar to those considered in \cite{Chen09, Chen12, Bao18}. The well-posedness and stability  of the truncated PML problem are justified by Laplace transform, variational method together with the energy method of \cite{joly02}.
The convergence of the PML scheme is based on the stability estimate of an initial-boundary value problem in the PML layer and  the exponential decay of the PML extension problem to be proved using modified Bessel functions. Such a technique is also inspired \cite{Chen09}.

This paper is organized  as follows. In the subsequent Section 2,  we first introduce the mathematical model and rigorous define the transparent boundary condition (TBC) to reformulate the scattering problem to an initial-boundary value problem in a truncated bounded domain. Well-posedness and stability will then be shown in Section 2.2. In Section 3, we derive a PML formulation in the half plane by complex coordinates stretching inspired by \cite{Chew94, Chen09, Chen12, Petropoulos} and study the well-posedness and stability  for the PML problem. We analyze the  exponential convergence of the PML method in the half space in Section 4. In the final Section 5, two numerical examples are reported to show the performance of the PML method.

\section{Mathematical formulations}
Let $\Gamma_0$ be a local perturbation of the straight line $\{(x_1,0): x_1\in \R\}$ such that $\Gamma_0$ coincides with $x_2=0$ in $|x_1|>R$ for some $R>0$ and that $\Gamma_0$ is a non-selfintersecting $C^2$-smooth curve. Denote by $\Omega\subset \R^2$ the unbounded domain above $\Gamma_0$, which is supposed to be filled by a homogeneous and isotropic medium with the unit mass density. Let $D \subset B_R^+:=\{x\in\Omega: |x|<R\}$ be  a bounded domain with the Lipschitz boundary $\pa D$ such that the exterior of $D$ is connected; see Figure \ref{fig1}.
Physically, the domain $D$ represents a sound soft obstacle embedded in $\Omega$. Write $\R^2_+=\{x\in \R^2: x_2>0\}$, $\Gamma_R^+:=\{x\in \Omega:  |x|=R\}$. It is obvious that $B_R^+$ is a Lipschitz domain.

The time-dependent acoustic scattering problem with the Dirichlet boundary condition enforcing on the obstacle $\partial D$ and the locally perturbed rough surface $\Gamma_0$ can be governed by the initial-boundary value problem of the wave equation
\be \label{eqs:wave}
\left\{\begin{array}{lll}
\partial_{t}^2 u(x,t)-\Delta u(x,t)=\partial_t f(x,t) &&\mbox{in}\quad (\Omega\backslash\overline{D})\times (0,T),\\
u(x,t)=0 \quad  &&\mbox{on}\quad ( \partial D \cup \Gamma_0)\times (0,T),\\
u(x,0)=\partial_t u(x,0)=0 &&\mbox{in}\quad (\Omega\backslash\overline{D}).
       \end{array}\right.
\en
Here, $T>0$ is an arbitrarily fixed positive number, the function $f$ represents an acoustic source term  compactly supported in $B_R^+\backslash\overline{D}$ and $u$ denotes the total field. In the exterior of $B_R^+$, the total field $u=u^{in}+u^{re}+u^{sc}$ can be divided into the sum of the incident field $u^{in}$, the reflected field $u^{re}$ corresponding to the unperturbed scattering problem in the homogeneous half space $x_2>0$ and the scattered field $u^{sc}$ caused by $D$ and the perturbation of the straight line $x_2=0$.  The first two components of $u$ will be explained as follows.

\begin{figure}[h]
\centering
\includegraphics[scale=0.4]{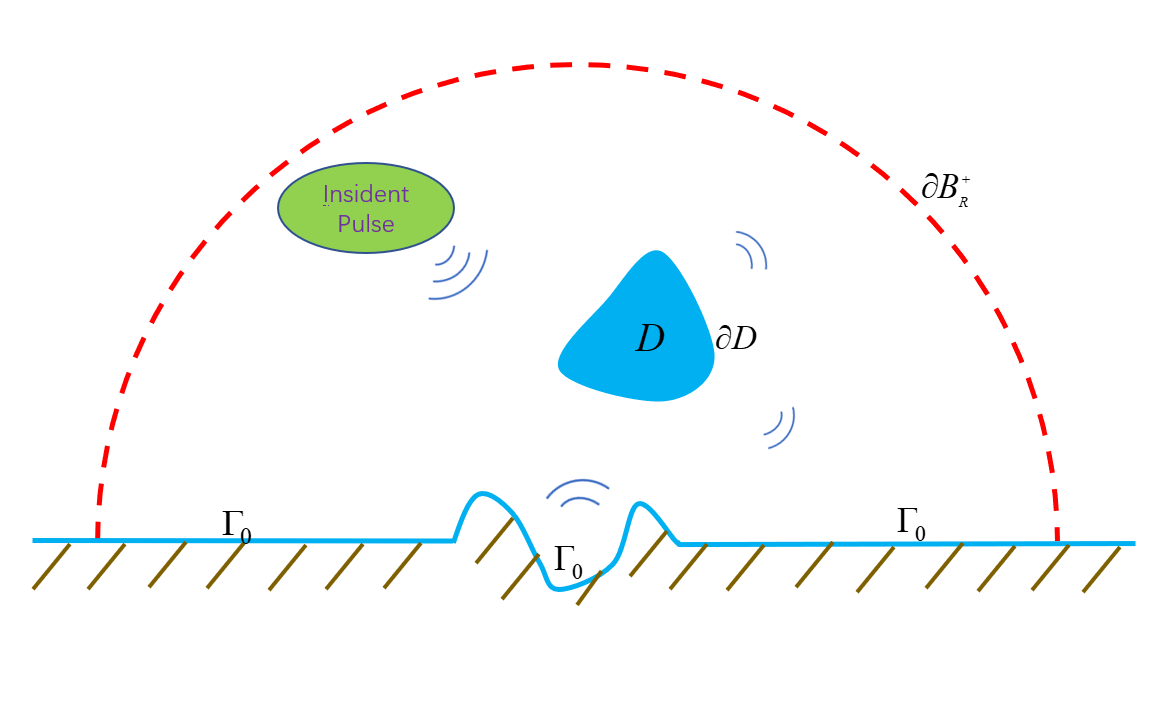}
\caption{Geometry of  acoustic wave scattering problem caused by a bounded obstacle $D$ and a locally perturbed curve $\Gamma_0$.
} \label{fig1}
\end{figure}
The incident field $u^{in}$ is generated by the inhomogeneous wave equation in $\R^2$:
\ben
\left\{\begin{array}{lll}
\partial_{t}^2 u(x,t)-\Delta u(x,t)=\partial_t f(x,t) &&\mbox{in}\quad \R^2, \; t>0,\\
u(x,0)=\partial_t u(x,0)=0 &&\mbox{in}\quad \R^2.
       \end{array}\right.
\enn
Obviously, the incident field $u^{in}$ takes the explicit form
\ben
u^{in}(x,t)=\int_{\R^2} G(x,t;\,y)*\pa_t f(y,t)\,dy \quad \mbox{in} \quad \R^2\times \R^+,
\enn
where $*$ denotes convolution between  $G$ and $\pa_t f$ with respect to the time $t$ , and
\ben
G(x,t;\,y):=\frac{H(t-|x-y|)}{2\pi\sqrt{t^2-|x-y|^2}},
\enn
is the Green's function of the wave operator $\pa_{t}^2-\Delta$ in the free space $\R^2\times\R$. Note that $H$ is the Heaviside function defined by
\ben
H(t):= \left\{\begin{array}{lll}
0,  && t\leq 0, \\
1, && t > 0.
\end{array}\right.
\enn
The reflected field $u^{re}$ caused by the incident field $u^{in}$ and the Dirichlet curve $x_2=0$ is governed by
\ben
\left\{\begin{array}{lll}
\partial_{t}^2 u^{re}(x,t)-\Delta u^{re}(x,t)=0 &&\mbox{in}\quad \R^2_+, \; t>0,\\
u^{re}(x,0)=\partial_t u^{re}(x,0)=0 &&\mbox{in}\quad \R^2_+,\\
u^{re}(x,t)=-u^{in}(x,t) &&\mbox{on}\quad x_2=0, t>0.
       \end{array}\right.
\enn
Denote by $y^{*}=(y_1,-y_2)$ the reflection of $y=(y_1,y_2)$ by the straight line $x_2=0$. Through simple calculations, we obtain the expression of the reflected field $u^{re}$ by
\ben
u^{re}(x,t)&=&-\int_{\R^2}G(x,t;\,y^{*})*\pa_t f(y,t)\,dy \\
&=& -\int_0^t \int_{B_R^+\backslash\overline{D}} G(x-y^*,t-\tau) \pa\tau f(y,\tau) \,dy d\tau.
\enn
Evidently, the sum $u^{in}+u^{re}$ denotes the total field to the unperturbed scattering problem that corresponds to $u^{in}$ and the Dirichlet curve $x_2=0$.
The function $u^{sc}$ consists of the scattered wave from the bounded domain $D$ and the local perturbation $\{x\in \Gamma_0:x_2\neq0, x_1\in\R  \}$.

Throughout this paper, we suppose that for any bounded domain $\Om_0$, $f\in H^2(0,T;L^2(\Om_0))$ and that $f|_{t=0}=0$, $f=\tilde{f}|_{(0,T)}$ where
\ben
\tilde{f}\in H^2(0,\infty;L^2(\Om_0)),\quad \|\tilde{f}\|_{ H^2(0,\infty;L^2(\Om_0))}\leq \|f\|_{ H^2(0,T;L^2(\Om_0))}.
\enn
This implies that the source term $\partial_t f$ on the right hand side of \eqref{eqs:wave} belongs to $H^1(0,T;\Omega\backslash\overline{D})$.
Hence, applying the approach of J. L. Lions (see  \cite[Theorem 8.1, Chapter 3]{LM72} and \cite[Theorem 8.2, Chapter 3]{LM72}) there exists a unique solution $u\in
C(0,T; H^1_0(\Omega\backslash \ov {D})) \cap C^1(0,T; L^2(\Omega\backslash \ov{D}))$ to \eqref{eqs:wave}.

\subsection{A transparent boundary condition (TBC) on a semi-circle }
The aim of this section is to rigorously address the Dirichlet-to-Neumann map for the wave equation \eqref{eqs:wave} in a locally perturbed half-plane. We shall follow the spirit of \cite{Chen09} for a bounded sound-hard obstacle but complement the definition of DtN there by describing mapping properties in time-dependent Sobolev spaces and connecting the DtN operators defined over a finite and an infinite time period.
More precisely, we shall define the time-domain boundary operator $\mathscr{T}$ by
\be \label{bc:operator}
\mathscr{T}u=\pa_r u \quad \mbox{on} \quad  \Gamma_R^+\times(0,T),
\en
which is called the TBC. Thus, the time-domain scattering problem (\ref{eqs:wave}) in the unbounded domain over the local rough surface can be  reduced into an equivalent initial-boundary value problem in the bounded domain $\Om_R^+:=B_R^+\backslash\overline{D}$:
\be \label{eqs:wave-b}
\left\{\begin{array}{lll}
\partial_{t}^2 u-\Delta u=\partial_t f &&\mbox{in}\quad \Om_R^+\times (0,T),\\
u=0 \quad  &&\mbox{on}\quad  (\partial D \cup  \Gamma_0)  \times (0,T),\\
\pa_r u=\mathscr{T}u \quad &&\mbox{on} \quad  \Gamma_R^+\times (0,T),\\
u|_{t=0}=\partial_t u|_{t=0}=0 &&\mbox{in}\quad \Om_R^+.
       \end{array}\right.
\en
In what follows we derive a representation of the boundary operator $\mathscr{T}$.
Let $H^{1/2}_0 (\Gamma_R^+)$, $H^{1/2} (\Gamma_R^+)$, $H^{-1/2} (\Gamma_R^+)$, $H^{-1/2}_0 (\Gamma_R^+)$ be Sobolev spaces defined on the open arc $\Gamma_R^+$. Then $H^{1/2}_0 (\Gamma_R^+)$ and $H^{-1/2} (\Gamma_R^+)$, $H^{1/2} (\Gamma_R^+)$ and $H^{-1/2}_0 (\Gamma_R^+)$ are anti-linear dual spaces\cite{Mclean}.
For $u\in H^1_0(B_R^+)$, we have $u|_{\Gamma_R^+} \in H^{1/2}_0 (\Gamma_R^+)$.

\noindent Consider an initial-boundary value problem  over a finite time period
\be \label{eqs:test1}
\left\{\begin{array}{lll}
\partial_{t}^2 u(x,t)-\Delta u(x,t)=0 &&\mbox{in}\quad (\Omega\backslash  \ov{ B_R^+})\times (0,T),\\
u(x,t)=g(x,t) \quad  &&\mbox{on}\quad \Gamma_R^+ \times (0,T),\\
u(x,t)=0 \quad &&\mbox{on}\quad (\Gamma_0 \cap |x|>R ) \times (0,T),\\
u(x,0)=\partial_t u(x,0)=0 &&\mbox{in}\quad (\Omega\backslash \ov {B_R^+})
       \end{array}\right.
\en
where $g\in C(0,T;H^{1/2}_0 (\Gamma_R^+))\,\cap C^1(0,T; H^{-1/2}(\Gamma_R^+))$ satisfying $g(x,0)=\partial_t g(x,0)=0$.
By \cite[Chapter 7]{Func},  there exists a  unique solution $u$ to the above equations satisfying
\ben
u\in C(0,T; H^1_{\diamond}(\Omega\backslash \ov {B_R^+})) \cap C^1(0,T; L^2(\Omega\backslash \ov{B_R^+})),
\enn
where $H^1_{\diamond}(\Omega\backslash \ov{ B_R^+})=\{u\in H^1(\Omega\backslash B_R^+): u=0 \;  \mbox{on} \;\{\Gamma_0  \cap   |x|>R\}\}$. 

\begin{defn}
 The DtN operator $\mathscr{T}: C(0,T;H^{1/2}_0 (\Gamma_R^+))\rightarrow C(0,T;H^{-1/2} (\Gamma_R^+))$ over a finite time period $(0,T)$ is defined as
\ben
\mathscr{T} g=\pa_r u \quad \mbox{on} \quad \Gamma_R^+\times(0,T),
\enn
where $u\in C(0,T; H^1_{\diamond}(\Omega\backslash \ov {B_R^+})) \cap C^1(0,T; L^2(\Omega\backslash \ov{B_R^+}))$ is the unique solution to (\ref{eqs:test1}).
\end{defn}
\noindent Consider another initial-boundary value problem but over an infinite time
\be \label{eqs:test2}
\left\{\begin{array}{lll}
\partial_{t}^2 w(x,t)-\Delta w(x,t)=0 &&\mbox{in}\quad (\Omega\backslash \ov{ B_R^+})\times (0,\infty),\\
w(x,t)=\tilde g(x,t) \quad  &&\mbox{on}\quad \Gamma_R^+ \times (0,\infty),\\
w(x,t)=0 \quad &&\mbox{on}\quad (\Gamma_0 \cap |x|>R ) \times (0,\infty),\\
w(x,0)=\partial_t w(x,0)=0 &&\mbox{in}\quad (\Omega\backslash \ov{B_R^+}),
       \end{array}\right.
\en
with the initial-boundary value
\be\label{reg}
\tilde{g}\in L^2(0,\infty;H^{1/2}_0 (\Gamma_R^+))\,\cap H^1(0,\infty; H^{-1/2}(\Gamma_R^+)),\quad \tilde{g}(x,0)=\partial_t \tilde{g}(x,0)=0.
\en
\begin{defn}
The DtN operator $\tilde{ \mathscr{T}}: L^2(0,\infty;H^{1/2}_0 (\Gamma_R^+))\rightarrow L^2(0,\infty;H^{-1/2} (\Gamma_R^+))$ over the infinite time period $(0,\infty)$ is defined as
\ben
\tilde{\mathscr{T}} \tilde g=\pa_r w  \quad  \mbox{on} \quad \Gamma_R^+\times(0,\infty),
\enn
where $ w\in L^2(0,\infty; H^1_{\diamond}(\Omega\backslash \ov {B_R^+})) \cap H^1(0,\infty; L^2(\Omega\backslash \ov{B_R^+}))$ is the unique solution of (\ref{eqs:test2}).
\end{defn}
\begin{lem} \label{dtn-t}
Let $\tilde g$ be the boundary value of the problem (\ref{eqs:test1})
and denote by $\tilde{g}$ its zero extension to $t>T$. Then
\ben
\tilde{\mathscr{T}} \tilde g=\mathscr{T} g \quad \mbox{in} \quad
L^2(0,T;H^{-1/2} (\Gamma_R^+)).\enn
\end{lem}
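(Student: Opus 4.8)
The plan is to deduce the identity from uniqueness, by showing that the infinite-time solution of \eqref{eqs:test2}, once restricted to the time window $(0,T)$, is nothing but the finite-time solution of \eqref{eqs:test1}, and then comparing Neumann traces. Throughout I write $g$ for the boundary datum appearing in \eqref{eqs:test1} and $\tilde g$ for its extension by zero to $(T,\infty)$; implicit in the statement is that $\tilde g$ inherits the regularity \eqref{reg}, which is what makes $\tilde{\mathscr T}\tilde g$ meaningful.

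First I would fix the unique $w\in L^2(0,\infty;H^1_\diamond(\Om\backslash\ov{B_R^+}))\cap H^1(0,\infty;L^2(\Om\backslash\ov{B_R^+}))$ solving \eqref{eqs:test2} with datum $\tilde g$, and check that its restriction to $(\Om\backslash\ov{B_R^+})\times(0,T)$ satisfies every line of \eqref{eqs:test1}. Indeed the wave equation, the homogeneous condition on $\Gamma_0\cap\{|x|>R\}$ and the vanishing data at $t=0$ hold on $(0,\infty)$, hence a fortiori on $(0,T)$, while on $\Gamma_R^+\times(0,T)$ one has $w=\tilde g=g$. The one point that is not a formality is that \eqref{eqs:test1} is posed in the class $C(0,T;H^1_\diamond)\cap C^1(0,T;L^2)$, i.e. with continuity in time, whereas $w$ is only given with $L^2$/$H^1$ integrability in time — the natural scale for the Laplace transform used in \eqref{eqs:test2}.

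To remedy this I would use the equation as a regularising device on the bounded interval: combining $\pa_t^2 w=\Delta w$ with $w\in L^2(0,T;H^1_\diamond)\cap H^1(0,T;L^2)$ and running the classical energy argument for the wave equation (formally, testing with $\pa_t w$ after subtracting a lifting of the Dirichlet datum $g$ and integrating over $(0,t)$) upgrades $w$ to $C([0,T];H^1_\diamond)\cap C^1([0,T];L^2)$. Thus $w|_{(0,T)}$ is admissible for \eqref{eqs:test1}, and by the uniqueness already invoked for that problem we get $w=u$ on $(\Om\backslash\ov{B_R^+})\times(0,T)$. Taking the exterior radial trace on $\Gamma_R^+$ — a bounded map into $C(0,T;H^{-1/2}(\Gamma_R^+))$ on the solution class, by the same (hidden) trace regularity underlying the definitions of $\mathscr T$ and $\tilde{\mathscr T}$ — yields $\pa_r w=\pa_r u$ on $\Gamma_R^+\times(0,T)$, that is $\tilde{\mathscr T}\tilde g=\mathscr T g$ in $C(0,T;H^{-1/2}(\Gamma_R^+))\hookrightarrow L^2(0,T;H^{-1/2}(\Gamma_R^+))$, which is the claim.

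I expect the regularity reconciliation of the previous paragraph to be the only genuine obstacle: one must certify that the infinite-time solution, cut down to $(0,T)$, really belongs to the continuous-in-time class of \eqref{eqs:test1} so that the finite-time uniqueness theorem applies; the rest (the formal verification of the equations, the uniqueness step, and the time-continuity of the Neumann trace) is routine. An alternative to this step is to invoke causality of the wave propagator directly — the solution at a time $t\le T$ depends only on the data on $[0,t]$, and these data coincide for $g$ and $\tilde g$ — but a rigorous proof of causality again rests on an energy estimate of essentially the same content.
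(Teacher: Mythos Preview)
Your argument is correct and follows the same uniqueness-based route as the paper: show that the infinite-time solution $w$, restricted to $(0,T)$, coincides with the finite-time solution $u$, and then compare normal derivatives on $\Gamma_R^+$. The paper's proof is terser---it sets $v:=u-w$, notes that $v$ satisfies the homogeneous problem \eqref{eqs:test3}, and invokes uniqueness directly without addressing the time-regularity reconciliation you flag---so your version is, if anything, more careful on that technical point.
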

\begin{proof}It is obvious that the extension $\tilde{g}$ fulfills the regularity and initial values specified in \eqref{reg}.
Define $v:=u-w$, where $u$ and $w$ are the unique solutions to \eqref{eqs:test1} and \eqref{eqs:test2}, respectively. It then follows that
\be \label{eqs:test3}
\left\{\begin{array}{lll}
\partial_{t}^2 v(x,t)-\Delta v(x,t)=0 &&\mbox{in}\quad (\Omega\backslash \ov {B_R^+})\times (0,T),\\
v(x,t)=0 \quad  &&\mbox{on}\quad \Gamma_R^+ \times (0,T),\\
v(x,t)=0 \quad &&\mbox{on}\quad (\Gamma_0 \cap |x|>R ) \times (0,T),\\
v(x,0)=\partial_t v(x,0)=0 &&\mbox{in}\quad (\Omega\backslash \ov {B_R^+}).
       \end{array}\right.
\en
By uniqueness to the above system (see e.g., \cite{LM72, Func}), we get $v\equiv0$ in $(\Omega\backslash \overline{B_R^+})\times (0,T)$, implying that $\pa_r v=0$ on  $\Gamma_R^+ \times (0,T)$. Hence, we obtain that $\tilde{\mathscr{T}}\tilde g=\mathscr{T} g$ on $\Gamma_R^+\times (0, T)$.
\end{proof}

From the proof of Lemma \ref{dtn-t} we conclude that the definition of $\mathscr{T}g$ is independent of the values of $g$ in $t>T$.
Below we want to derive an expression of $\tilde{\mathscr{T}}$ by Laplace transform.
For any $s\in \C$ with $\textnormal{Re}(s)>0$, applying Laplace transform to (\ref{eqs:test2}) with respect to $t$, we see that $w_L=\mathscr{L}(w)$ satisfies the Helmoholtz equation
\be\label{w1}
-\Delta w_L +s^2 w_L=0 \quad \mbox{in}\quad \Om\ba  \ov{B_R^+},
\en
together with the radiation condition
\be\label{w2}
\sqrt{r}(\frac{\pa w_L}{\pa r}  +s w_L ) \rightarrow 0  \quad \mbox{as}\quad r=|x|\rightarrow\infty.
\en
Let $\mathscr{G}: H^{1/2}_0(\G_R^+)\rightarrow H^{-1/2}(\G_R^+)$ be the DtN operator in s-domain defined by
\ben
\mathscr{G}\tilde{g}_L=\pa_r w_L \quad \textnormal{on}\quad \Gamma_R^+.
\enn
where $w_L$ is the unique solution to \eqref{w1}-\eqref{w2} satisfying the boundary value $w_L=g_L$ on $\Gamma_R^+$ and $w_L=0$ on $\Gamma_0\cap \{x: |x|>R\}$.
Then it follows that $$\tilde{\mathscr{T}}=\mathscr{L}^{-1}\circ \mathscr{G} \circ \mathscr{L}.$$ Next, we derive a representation of the DtN operator $\mathscr G$. In the polar coordinates $(r,\theta)$,
$w_L$ can be expanded into the series (see e.g., \cite{Chen09, Lidtn, Wood})
\ben
w_L(r, \theta; s) =\sum_{n=1}^{\infty} \frac{K_n(sr)}{K_n(sR)}w^n _L(R,s)\sin{n\theta}, \quad r>R, \quad \theta\in[0,\pi],
\enn
where $$w^n _L(R,s)=\frac{2}{\pi} \int_{0}^{\pi} {w_L (R, \theta, s) \sin{n\theta} \, d\theta}= \frac{2}{\pi} \int_{0}^{\pi} {\tilde{g}_L (R, \theta, s) \sin{n\theta} \, d\theta}
.$$ Here $K_n(z)$ represents the modified Bessel function of order $n$. A simple calculation gives
\be \label{eq:gul}
\mathscr{G}w_L(R,\theta,s) = \frac{\pa w_L}{\pa r}|_{\Gamma_R^+}=s\sum_{n=1}^{\infty} \frac{K_n'(sR)}{K_n(sR)}w^n _L(R,s)\sin{n\theta}.
\en

The DtN operators $\mathscr{G}$ and $\mathscr{T}$ have the following properties.

\begin{lem} \label{lem:bd}
The operator $\mathscr{G}: H^{1/2}_0(\Gamma_R^+)  \rightarrow  H^{-1/2}(\Gamma_R^+)$ is bounded.
\end{lem}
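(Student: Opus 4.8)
The plan is to establish the boundedness of $\mathscr{G}$ via the standard route for DtN operators: reduce the claim to a uniform (in the angular frequency) bound on the symbol of $\mathscr{G}$, namely the sequence $s\,K_n'(sR)/K_n(sR)$, and then read off the mapping property between $H^{1/2}_0(\Gamma_R^+)$ and $H^{-1/2}(\Gamma_R^+)$ from the explicit series \eqref{eq:gul}. Since the functions $\{\sin n\theta\}_{n\ge 1}$ form an orthogonal basis of $L^2(0,\pi)$ and the Sobolev norms on $\Gamma_R^+$ can be characterized (up to equivalence) by weighted $\ell^2$ sums of the sine-coefficients, namely $\|\phi\|_{H^{1/2}_0(\Gamma_R^+)}^2 \sim \sum_{n\ge1}(1+n^2)^{1/2}|\phi_n|^2$ and $\|\psi\|_{H^{-1/2}(\Gamma_R^+)}^2 \sim \sum_{n\ge1}(1+n^2)^{-1/2}|\psi_n|^2$, it suffices to show that there is a constant $C=C(s,R)$ with
\[
\Bigl|\, s\,\frac{K_n'(sR)}{K_n(sR)}\,\Bigr| \;\le\; C\,(1+n^2)^{1/2}\qquad\text{for all } n\ge 1 .
\]
Then for $\tilde g_L=\sum \tilde g_L^n \sin n\theta$ one gets $\|\mathscr{G}\tilde g_L\|_{H^{-1/2}}^2 \sim \sum (1+n^2)^{-1/2}|s K_n'(sR)/K_n(sR)|^2|\tilde g_L^n|^2 \le C^2 \sum (1+n^2)^{1/2}|\tilde g_L^n|^2 \sim C^2\|\tilde g_L\|_{H^{1/2}_0}^2$, which is exactly the asserted boundedness.

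The technical heart is therefore the estimate on the logarithmic derivative $K_n'(z)/K_n(z)$ with $z=sR$, $\real z>0$. I would proceed from the known integral/asymptotic properties of the modified Bessel function of the second kind. Two ingredients do the job: first, the recurrence relation $z K_n'(z) = n\,K_n(z) - z\,K_{n+1}(z)$ (equivalently $K_n'(z) = -\tfrac12(K_{n-1}(z)+K_{n+1}(z))$), which immediately gives $|z K_n'(z)/K_n(z)| \le n + z\,|K_{n+1}(z)/K_n(z)|$; and second, the monotonicity/ratio bound $0 < K_n(z)/K_{n+1}(z) < 1$ for $z>0$ and, for complex $z$ with $\real z>0$, the corresponding bound $|K_{n+1}(z)/K_n(z)|$ is controlled by a quantity of order $1 + n/|z|$ coming from the uniform (Olver) asymptotics $K_n(z)\sim \sqrt{\pi/(2n)}\,(ez/(2n))^{-n}$ as $n\to\infty$. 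Combining these yields $|z K_n'(z)/K_n(z)| \le C(1+n)$, hence the required $O((1+n^2)^{1/2})$ growth; for the finitely many small $n$ one simply uses that $K_n(sR)\ne 0$ (true whenever $\real(sR)>0$) so each term is a finite constant. The main obstacle is handling these Bessel ratios for \emph{complex} argument $s$ rather than real positive argument, where the clean monotonicity arguments are not directly available; I expect to route around this using the Laplace-type integral representation $K_n(z)=\int_0^\infty e^{-z\cosh t}\cosh(nt)\,dt$ valid for $\real z>0$, together with $|K_n(z)|\le K_n(\real z)$ and a matching lower bound on $|K_n(z)|$ in terms of $K_n(\real z)$ via a sector argument, so that the real-variable estimates transfer with constants depending only on $\arg s$.

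An alternative, perhaps cleaner, organization avoids pointwise symbol bounds entirely: work directly with the sesquilinear form associated to $\mathscr{G}$. For $\phi\in H^{1/2}_0(\Gamma_R^+)$ let $w_L$ be the unique $H^1$-solution of \eqref{w1}--\eqref{w2} with trace $\phi$; multiplying \eqref{w1} by a test function $\overline{v}$ with $v|_{\Gamma_R^+}=\psi$ and integrating over $\Omega\setminus\overline{B_R^+}$ gives $\langle \mathscr{G}\phi,\psi\rangle_{\Gamma_R^+} = \int_{\Omega\setminus\overline{B_R^+}} (\nabla w_L\cdot\nabla\overline{v} + s^2 w_L\overline{v})$. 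Taking $v$ to be the bounded extension operator applied to $\psi$ (with $\|v\|_{H^1}\le C\|\psi\|_{H^{1/2}_0}$), and using that the exterior problem \eqref{w1}--\eqref{w2} is well-posed with $\|w_L\|_{H^1(\Omega\setminus\overline{B_R^+})}\le C(s)\|\phi\|_{H^{1/2}_0(\Gamma_R^+)}$ (coercivity of $a(u,v)=\int \nabla u\cdot\nabla\overline v + s^2\int u\overline v$ for $\real(s^2)>0$ or, in general, the Laplace-domain resolvent estimate already invoked earlier in the paper), one gets $|\langle\mathscr{G}\phi,\psi\rangle|\le C(s)\|\phi\|_{H^{1/2}_0}\|\psi\|_{H^{1/2}_0}$ for all $\psi$, and duality of $H^{-1/2}(\Gamma_R^+)$ with $H^{1/2}_0(\Gamma_R^+)$ then yields $\|\mathscr{G}\phi\|_{H^{-1/2}(\Gamma_R^+)}\le C(s)\|\phi\|_{H^{1/2}_0(\Gamma_R^+)}$. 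I would present this variational argument as the primary proof and relegate the Bessel-ratio computation to a remark, since it is the Bessel estimates that later reappear in the exponential-decay analysis of the PML extension.
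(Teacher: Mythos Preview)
Your first route---bounding the symbol $sK_n'(sR)/K_n(sR)$ via a Bessel recurrence and then reading off the $H^{1/2}_0\to H^{-1/2}$ mapping from the series---is exactly what the paper does. The paper uses $K_n'(z)=-K_{n-1}(z)-\tfrac{n}{z}K_n(z)$ to obtain $|K_n'(sR)/K_n(sR)|\le n/(|s|R)+1\le C\sqrt{1+n^2}$, and then bounds the duality pairing $\langle\mathscr{G}\omega,\phi\rangle_{\Gamma_R^+}$ by Cauchy--Schwarz on the sine coefficients. One difference in care: the paper simply writes $|K_{n-1}(sR)/K_n(sR)|\le 1$ without addressing the complex-argument subtlety you flag; your proposed detour through the integral representation $K_n(z)=\int_0^\infty e^{-z\cosh t}\cosh(nt)\,dt$ is a reasonable way to make that step rigorous for $\textnormal{Re}(s)>0$. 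Your variational alternative (Green's identity in the exterior plus well-posedness of the exterior problem) is a genuinely different and more structural argument; it avoids special-function identities and would generalize to other geometries, but the paper does not take that route---and since the explicit sine-series representation and Bessel ratios are needed anyway for Lemma~\ref{lem:G} and the PML decay estimates in Section~4, there is no real economy in sidestepping them here.
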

\begin{proof}
By the recurrence formula of modified Bessel function
\ben
K_n^{'}(z)=-K_{n-1}(z)-\frac{n}{z}K_n(z),
\enn
we deduce that
\ben
\Big | \frac{K_n^{'}(sR)}{K_n(sR)} \Big | = \Big | \frac{n}{sR}+\frac{K_{n-1}(sR)}{K_n(sR)} \Big | \leq \frac{n}{|s|R}+1.
\enn
Let $|B_n|:=\Big | \frac{K_n^{'}(sR)}{K_n(sR)} \Big |$.  Then, $|B_n|\leq C\sqrt{n^2+1}$ for some constant $C>0$.
Given  $\phi \in H^{-1/2}_0(\Gamma_R^+)$, we expand
\ben
\phi(R,\theta)=\sum_{n=1}^{\infty}\phi_n(R)\sin{n\theta},\quad \phi_n(R)=\frac{2}{\pi}\int_{0}^{\pi}\phi(R,\theta)\sin n\theta \, d\theta.
\enn
By the definition of $\mathscr{G}$, for any $\om \in H^{1/2}_0(\Gamma_R^+)$ it follows that
\begin{align*}
\Big|\langle \mathscr{G}(\om),\,\phi \rangle_{\Gamma_R^+}\Big|&= \Big| \int_{\Gamma_R^+} s \sum_{n=1}^{\infty} \frac{K_n^{'}  (sR)}{K_n(sR)}\om_n(R) \sin n\theta  \sum_{n=1}^{\infty} \ov{\phi}_n(R) \sin n\theta\,d \gamma \Big| \\
&=\Big| sR \sum_{n=1}^{\infty} \frac{K_n^{'}  (sR)}{K_n(sR)}\om_n(R) \ov{\phi}_n(R) \int_0^\pi \sin^2 n\theta \, d\theta \\
&=\Big|\frac{\pi}{2}sR\sum_{n=1}^{\infty} \frac{K_n^{'}  (sR)}{K_n(sR)}\om_n(R) \ov{\phi}_n(R) \Big| \\
&\leq \frac{\pi}{2}|s|R \Big(\sum_{n=1}^{\infty} \Big| \frac{K_n^{'}  (sR)}{K_n(sR)} \Big|   |\om_n(R)|^2   \Big)^{1/2} \Big(\sum_{n=1}^{\infty} \Big| \frac{K_n^{'}  (sR)}{K_n(sR)} \Big|     | \phi_n(R)|^2    \Big)^{1/2}\\
&\leq C\Big( \sum_{n=1}^{\infty} \sqrt{1+n^2} |\om_n(R)|^2 \Big)^{1/2}   \Big( \sum_{n=1}^{\infty} \sqrt{1+n^2} |\phi_n(R)|^2 \Big)^{1/2} \\
&\leq C \| \om \|_{H^{1/2}_0(\Gamma_R^+)} \, \| \phi \|_{H^{1/2}_0(\Gamma_R^+)}.
\end{align*}
Then, we have
\ben
\|\mathscr G \om\|_{H^{-1/2}(\Gamma_R^+)} =\sup_{\phi\in H^{1/2}_0(\Gamma_R^+)}\frac{\Big|\langle \mathscr{G}(\om),\,\phi \rangle_{\Gamma_R^+}\Big|}{\|\phi\|_{H^{1/2}_0(\Gamma_R^+)}}\leq C \| \om\|_{H^{1/2}_0(\Gamma_R^+)}.
\enn
\end{proof}

\begin{lem} \label{lem:G}
It holds that, for any $\om \in H^{1/2}_0(\Gamma_R^+)$,
\ben
-\textnormal{Re}\,\langle s^{-1}\,\mathscr{G}\om,\, \om \rangle_{\Gamma_R^+} \geq0.
\enn
\end{lem}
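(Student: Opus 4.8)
The plan is to express the sesquilinear form $\langle s^{-1}\mathscr{G}\om,\om\rangle_{\Gamma_R^+}$ as a volume integral over the exterior domain by means of Green's first identity, and then to read off the sign from the hypothesis $\textnormal{Re}(s)>0$. Given $\om\in H^{1/2}_0(\Gamma_R^+)$, let $w_L$ be the unique solution of \eqref{w1}--\eqref{w2} with $w_L=\om$ on $\Gamma_R^+$ and $w_L=0$ on $\Gamma_0\cap\{|x|>R\}$, so that $\mathscr{G}\om=\pa_r w_L|_{\Gamma_R^+}$ by definition. For $\rho>R$ set $\Omega_\rho:=(\Omega\ba\ov{B_R^+})\cap\{|x|<\rho\}$. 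Multiplying \eqref{w1} by $\ov{w_L}$, integrating over $\Omega_\rho$ and applying Green's first identity gives
$$\int_{\Omega_\rho}\big(|\nabla w_L|^2+s^2|w_L|^2\big)\,dx=-\langle\mathscr{G}\om,\om\rangle_{\Gamma_R^+}+\int_{\Gamma_\rho^+}(\pa_r w_L)\,\ov{w_L}\,ds,$$
where the boundary contribution on $\Gamma_0\cap\{R<|x|<\rho\}$ vanishes since $w_L=0$ there, and the minus sign in front of $\langle\mathscr{G}\om,\om\rangle_{\Gamma_R^+}$ reflects that the outward unit normal of $\Omega_\rho$ along $\Gamma_R^+$ points towards the origin, i.e. equals $-\pa_r$.

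Next I would let $\rho\to\infty$. From the modal expansion $w_L(r,\theta;s)=\sum_{n\ge1}\frac{K_n(sr)}{K_n(sR)}w_L^n(R,s)\sin n\theta$ and the exponential decay of $K_n(z)$ as $|z|\to\infty$ with $\textnormal{Re}(z)>0$ (equivalently, the radiation condition \eqref{w2}), one verifies that $w_L\in H^1(\Omega\ba\ov{B_R^+})$ and that $\int_{\Gamma_\rho^+}(\pa_r w_L)\ov{w_L}\,ds\to0$. Passing to the limit yields $\langle\mathscr{G}\om,\om\rangle_{\Gamma_R^+}=-\int_{\Omega\ba\ov{B_R^+}}(|\nabla w_L|^2+s^2|w_L|^2)\,dx$. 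Dividing by $s$, taking real parts, and using that $\textnormal{Re}(s)>0$ forces $\textnormal{Re}(s^{-1})=\textnormal{Re}(s)/|s|^2>0$, we obtain
$$-\textnormal{Re}\,\langle s^{-1}\mathscr{G}\om,\om\rangle_{\Gamma_R^+}=\textnormal{Re}(s^{-1})\int_{\Omega\ba\ov{B_R^+}}|\nabla w_L|^2\,dx+\textnormal{Re}(s)\int_{\Omega\ba\ov{B_R^+}}|w_L|^2\,dx\ \ge\ 0,$$
which is the assertion.

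An alternative, more self-contained route uses the series \eqref{eq:gul} directly. By orthogonality of $\{\sin n\theta\}$ on $(0,\pi)$ one has $\langle s^{-1}\mathscr{G}\om,\om\rangle_{\Gamma_R^+}=\frac{\pi R}{2}\sum_{n\ge1}\frac{K_n'(sR)}{K_n(sR)}|\om_n(R)|^2$, so it suffices to prove $\textnormal{Re}\big(K_n'(z)/K_n(z)\big)\le0$ whenever $\textnormal{Re}(z)>0$. Writing $v(r)=K_n(sr)/K_n(sR)$, which solves $-(rv')'+(s^2r+n^2/r)v=0$ on $(R,\infty)$ with $v(R)=1$ and $v$ decaying at infinity, testing this ODE against $r\ov v$ and integrating by parts gives
$$R\,\frac{K_n'(sR)}{K_n(sR)}=-s^{-1}\int_R^\infty r|v'|^2\,dr-s\int_R^\infty r|v|^2\,dr-s^{-1}n^2\int_R^\infty r^{-1}|v|^2\,dr,$$
whose real part is nonpositive since $\textnormal{Re}(s)>0$ and $\textnormal{Re}(s^{-1})>0$.

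The main technical point to handle carefully is the limit $\rho\to\infty$: one must justify both that $w_L$ belongs to $H^1$ of the whole unbounded exterior region and that the flux through $\Gamma_\rho^+$ decays, which rest on the exponential decay of the modified Bessel functions $K_n(sr)$ for $\textnormal{Re}(s)>0$ — in contrast to the purely oscillatory Hankel functions appearing in the time-harmonic setting. With that in hand, everything else is a routine application of Green's identity together with the orthogonality relations, and the desired sign then follows immediately from $\textnormal{Re}(s)>0$.
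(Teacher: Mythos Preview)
Your alternative route via the series \eqref{eq:gul} is exactly the paper's own proof: the paper expands $\om$ in sines, writes $-\textnormal{Re}\,\langle s^{-1}\mathscr{G}\om,\om\rangle_{\Gamma_R^+}=-\tfrac{\pi R}{2}\sum_n\textnormal{Re}\big(K_n'(sR)/K_n(sR)\big)|\om_n|^2$, and then invokes the inequality $\textnormal{Re}\big(K_n'(sR)/K_n(sR)\big)\le0$ as a black box (Lemma~\ref{lem:mbf-1} in the appendix, citing \cite{Chen09}). You go one step further and supply a self-contained ODE proof of that Bessel inequality; note a small slip --- the displayed identity for $R\,K_n'(sR)/K_n(sR)$ comes from testing the ODE against $\ov v$, not $r\ov v$.

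Your main argument via Green's identity in the exterior region is a genuinely different and arguably cleaner route: it bypasses the modal expansion and the Bessel estimate entirely, reading the sign directly from the PDE $-\Delta w_L+s^2w_L=0$ and the hypothesis $\textnormal{Re}(s)>0$. The price is that you must justify $w_L\in H^1(\Omega\ba\ov{B_R^+})$ and the vanishing of the flux through $\Gamma_\rho^+$, which you correctly flag and which follow from the exponential decay of $K_n(sr)$; the series approach gets this term-by-term for free. The Green's-identity proof also makes transparent why the same inequality would hold for non-circular truncation boundaries, whereas the paper's argument is tied to the explicit Fourier--Bessel representation.
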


\begin{proof}
Given $\om \in H^{1/2}_0(\Gamma_R^+)$, we have
\ben
\om(R,\theta)=\sum_{n=1}^{\infty}\om_n(R)\sin{n\theta},\quad \om_n(R)=\frac{2}{\pi}\int_{0}^{\pi}\om(R,\theta)\sin n\theta \, d\theta.
\enn
It follows from the expression of $\mathscr{G}u_L$ (\ref{eq:gul}) and Lemma \ref{lem:mbf-1}, we obtain
\begin{align*}
-\textnormal{Re}\,\langle s^{-1}\,\mathscr{G}\om,\om \rangle_{\Gamma_R^+}
=&-\textnormal{Re} \int_{\Gamma_R^+} \sum_{n=1}^{\infty} \frac{K_n^{'}  (sR)}{K_n(sR)}\om_n(R) \sin n\theta  \sum_{n=1}^{\infty} \ov{\om}_n(R) \sin n\theta\,d \gamma\\
=& -R \sum_{n=1}^{\infty}\textnormal{Re}\Big(\frac{K_n^{'}  (sR)}{K_n(sR)}\Big)|\om_n (R)|^2 \int_0^{\pi} \sin^2 n\theta \, d \theta\\
=&-\frac{\pi}{2} R \sum_{n=1}^{\infty}\textnormal{Re}\Big(\frac{K_n^{'}  (sR)}{K_n(sR)}\Big)|\om_n (R)|^2\geq 0.
\end{align*}
\end{proof}
Below we write the Laplace transform variable as $s=s_1+is_2$ with $s_1>0$, $s_2\in\R$.

\begin{lem} \label{lem:t}
Let $\om \in C(0,T;H^{1/2}_0({\Gamma_R^+})\cap C^1(0,T;H^{-1/2}({\Gamma_R^+}) $ with the initial values $\om(\cdot,\, 0)=\partial_t \om(\cdot,\,0)=0$. 
Then it holds that
\ben
\textnormal{Re}\, \int_0^T    e^{-2s_1 t}\langle \mathscr{T} \om, \, \partial_t \om \rangle_{\Gamma_R^+}\,dt \leq0.
\enn
\end{lem}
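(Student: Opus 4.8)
The plan is to push the estimate to the Laplace ($s$-)domain, where it collapses onto the sign condition of Lemma~\ref{lem:G}. By continuity of both sides in $\omega$ together with a density argument, I would first reduce to the case in which $\omega$ vanishes in a neighbourhood of $t=T$ (replace $\omega$ by $\chi\omega$ with $\chi$ a smooth temporal cut-off supported in $[0,T)$); then the extension $\tilde\omega$ of $\omega$ by zero to $t>T$ satisfies the regularity and initial conditions in \eqref{reg}, and $\partial_t\tilde\omega\equiv0$ on $(T,\infty)$.

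\emph{Step 1: from finite to infinite time.} By Lemma~\ref{dtn-t} and the causality remark following it, $\mathscr T\omega=\tilde{\mathscr T}\tilde\omega$ on $\Gamma_R^+\times(0,T)$. Since $\partial_t\tilde\omega$ vanishes for $t>T$,
\[
\textnormal{Re}\int_0^T e^{-2s_1 t}\langle\mathscr T\omega,\partial_t\omega\rangle_{\Gamma_R^+}\,dt=\textnormal{Re}\int_0^\infty e^{-2s_1 t}\langle\tilde{\mathscr T}\tilde\omega,\partial_t\tilde\omega\rangle_{\Gamma_R^+}\,dt .
\]

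\emph{Step 2: Plancherel and reduction to Lemma~\ref{lem:G}.} Write $s=s_1+is_2$ with $s_1>0$ and put $\tilde\omega_L=\mathscr L\tilde\omega$. Since $\tilde\omega(\cdot,0)=0$ we have $\mathscr L(\partial_t\tilde\omega)=s\,\tilde\omega_L$, and $\mathscr L(\tilde{\mathscr T}\tilde\omega)=\mathscr G\tilde\omega_L$ by the identity $\tilde{\mathscr T}=\mathscr L^{-1}\circ\mathscr G\circ\mathscr L$. Multiplying the integrand by $e^{-s_1 t}$ and invoking Parseval's identity in $t$ together with Fubini on $\Gamma_R^+$ (the complex conjugate generated by Parseval matches the sesquilinear pairing used in Lemmas~\ref{lem:bd}--\ref{lem:G}) yields
\[
\int_0^\infty e^{-2s_1 t}\langle\tilde{\mathscr T}\tilde\omega,\partial_t\tilde\omega\rangle_{\Gamma_R^+}\,dt=\frac{1}{2\pi}\int_{-\infty}^{\infty}\bar s\,\langle\mathscr G\tilde\omega_L(s),\tilde\omega_L(s)\rangle_{\Gamma_R^+}\,ds_2 .
\]
Finally, writing $\bar s=|s|^2 s^{-1}$ gives $\textnormal{Re}\big(\bar s\,\langle\mathscr G\tilde\omega_L,\tilde\omega_L\rangle_{\Gamma_R^+}\big)=|s|^2\,\textnormal{Re}\,\langle s^{-1}\mathscr G\tilde\omega_L,\tilde\omega_L\rangle_{\Gamma_R^+}$, which is $\le0$ for every $s$ with $s_1>0$ by Lemma~\ref{lem:G}. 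Hence the integrand on the right is pointwise nonpositive in $s_2$, and taking real parts in the two displays proves the claim.

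\emph{Where the difficulty lies.} The algebraic core is just the two displayed identities followed by Lemma~\ref{lem:G}, and I expect no obstacle there. The work is in the functional-analytic bookkeeping: (i) making the zero extension admissible, handled by the cut-off/density reduction above; (ii) interpreting $\langle\mathscr T\omega,\partial_t\omega\rangle_{\Gamma_R^+}$ and its transform in the appropriate space--time duality, again via smooth approximation; and (iii) justifying the Parseval--Fubini step, i.e.\ the square-integrability of $s_2\mapsto\tilde\omega_L(s_1+is_2)$ and $s_2\mapsto\mathscr G\tilde\omega_L(s_1+is_2)$ along the line $\textnormal{Re}\,s=s_1$, which rests on the Paley--Wiener description of the Laplace transform of functions in $L^2(0,\infty;H^{1/2}_0(\Gamma_R^+))\cap H^1(0,\infty;H^{-1/2}(\Gamma_R^+))$ together with the boundedness of $\mathscr G$ from Lemma~\ref{lem:bd}. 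Point (iii) is the one requiring the most care.
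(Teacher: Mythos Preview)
Your proof is correct and follows essentially the same route as the paper: zero-extend $\omega$ past $T$, invoke Lemma~\ref{dtn-t} to pass from $\mathscr T$ to $\tilde{\mathscr T}$, apply the Parseval identity to reach $\frac{1}{2\pi}\int_{-\infty}^\infty \bar s\,\langle\mathscr G\tilde\omega_L,\tilde\omega_L\rangle_{\Gamma_R^+}\,ds_2$, and then use $\bar s=|s|^2 s^{-1}$ together with Lemma~\ref{lem:G}. The paper's argument is terser and omits the cut-off/density step and the Parseval--Fubini justification that you flag in your ``difficulty'' paragraph, so your version is in fact a more careful rendering of the same proof.
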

\begin{proof}
Let $\tilde{\om}(r,t)$ be the zero extension of $\om(r,t)$ with respect to $t$ in $\R$. Applying the Parseval identity (\ref{PI}) and Lemmas \ref{dtn-t} and \ref{lem:G}, we obtain
\begin{align*}
\textnormal{Re}\, \int_0^T e^{-2s_1 t}\langle \mathscr{T}\om, \, \partial_t \om \rangle_{\Gamma_R^+}\,dt
&=\textnormal{Re}\, \int_0^T e^{-2s_1 t} \int_{\Gamma_R^+}\mathscr{T}\om \partial_t \overline\om \,d\gamma\,dt \\
&=\textnormal{Re}\, \int_{\Gamma_R^+}\int_0^{\infty} e^{-2s_1 t} \tilde{ \mathscr{T}}\tilde\om \partial_t \overline{\tilde\om} \,dt \,d\gamma \\
&=\frac{1}{2\pi}\int_{-\infty}^{\infty}\textnormal{Re} \langle \mathscr{G}  \tilde{\om}_L,\, s\tilde{\om}_L \rangle_{\Gamma_R^+} \,d s_2 \\
&=\frac{1}{2\pi}\int_{-\infty}^{\infty} |s|^2 \textnormal{Re} \langle s^{-1}\mathscr{G}  \tilde{\om}_L,\, \tilde{\om}_L \rangle_{\Gamma_R^+} \,d s_2 \\
&\leq0.
\end{align*}
\end{proof}

\subsection{Well-posedness in the time-domain}
In the subsection, we prove well-posedness  of the truncated initial-boundary value problem (\ref{eqs:wave-b}) in the  bounded domain $\Om_R^+$ by using the variational method in the Laplace domain. Taking Laplace transform of (\ref{eqs:wave-b}) and using $f(\cdot, 0)=0$ we obtain
\be  \label{eq:s}
\left\{\begin{array}{lll}
\Delta u_L-s^2 u_L=sf_L  &&\mbox{in} \quad \Om_{R}^+\backslash\overline{D}, \\
    u_L=0   &&\mbox{on} \quad \partial D\cup\Gamma_0, \\

\partial_r u_L=\mathscr{G}u_L  &&\mbox{on} \quad \Gamma_R^+.
\end{array}\right.
\en
We formulate the variational formulation of problem (\ref{eq:s}) and show its well-posedness in the space $X_R:=\{u\in H^1(\Om_R^+):u=0\; \mbox{on} \;\partial D \cup \Gamma_0\}$.
Multiplying the Helmholtz equation in  (\ref{eq:s}) by the complex conjugate of a test function $v\in X_R$, applying the Green's formula with the boundary conditions on $\Gamma_R^+\cup\Gamma_0\cup\partial D$, we arrive at
\be \label{eq:s-v}
a(u_L,\,v)=\int_{\Om_R^+} f_L \,  \overline v \, d x  \quad \mbox{for all}\quad v\in X_R,
\en
where the sesquilinear form $a(\cdot,\cdot)$ is defined as
\ben
a(u_L,\,v)=\int_{\Om_R^+} \left( \frac{1}{s} \nabla u_L \cdot \nabla \overline v +s u_L  \overline v\right) \,d x -\langle s^{-1}\mathscr{G}u_L,\, v \rangle_{\Gamma_R^+}.
\enn

\begin{lem}\label{lem:wave-s}
The variational problem (\ref{eq:s-v}) has a unique solution $u_L\in X_R$ with the following stability estimate
\be \label{eq:wave-s}
\| \nabla u_L \|_{L^2(\Omega_R^+)} + \| s u_L \|_{L^2(\Omega_R^+)} \leq C \frac{(1+|s|)|s|}{s_1}\|f_L\|_{L^2(\Omega_R^+)},
\en
where $C$ is a constant independent of $s$.
\end{lem}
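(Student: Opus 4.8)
The plan is to apply the Lax–Milgram lemma to the sesquilinear form $a(\cdot,\cdot)$ on $X_R$, so the proof splits into (i) boundedness of $a$, (ii) coercivity of $a$, (iii) boundedness of the antilinear functional $v\mapsto\int_{\Om_R^+}f_L\,\overline v\,dx$, and then (iv) unwinding the coercivity constant into the stated estimate. Boundedness of $a$ is routine: the volume terms are controlled by $|s|^{-1}\|\nabla u_L\|\,\|\nabla v\|+|s|\,\|u_L\|\,\|v\|$, and the boundary term is handled by Lemma~\ref{lem:bd} together with the trace inequality $\|u_L\|_{H^{1/2}_0(\Gamma_R^+)}\le C\|u_L\|_{H^1(\Om_R^+)}$; boundedness of the right-hand side functional is just Cauchy–Schwarz.

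The heart of the matter is coercivity. I would test $a(u_L,u_L)$ and take the real part. The volume part gives
\[
\real\,a(u_L,u_L)=\real\!\left(\frac1s\right)\|\nabla u_L\|_{L^2(\Om_R^+)}^2+\real(s)\,\|u_L\|_{L^2(\Om_R^+)}^2-\real\,\langle s^{-1}\mathscr{G}u_L,u_L\rangle_{\Gamma_R^+}.
\]
By Lemma~\ref{lem:G} the boundary term contributes a nonnegative quantity to $-\real\,a$, hence may be dropped when bounding $\real\,a$ from below; and since $\real(1/s)=s_1/|s|^2$ and $\real(s)=s_1$, the volume part is bounded below by $\frac{s_1}{|s|^2}\bigl(\|\nabla u_L\|^2+|s|^2\|u_L\|^2\bigr)$. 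Thus
\[
\real\,a(u_L,u_L)\ \ge\ \frac{s_1}{|s|^2}\Bigl(\|\nabla u_L\|_{L^2(\Om_R^+)}^2+\|s\,u_L\|_{L^2(\Om_R^+)}^2\Bigr),
\]
which is the coercivity bound (with coercivity constant $\sim s_1/(1+|s|)^2$ after comparing the $H^1$-norm with the weighted norm $\|\nabla u_L\|^2+|s|^2\|u_L\|^2$). Lax–Milgram then yields existence and uniqueness of $u_L\in X_R$.

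For the stability estimate, I would combine coercivity with the variational identity: the left side above is $\le|a(u_L,u_L)|=\bigl|\int_{\Om_R^+}f_L\overline{u_L}\,dx\bigr|\le\|f_L\|_{L^2(\Om_R^+)}\|u_L\|_{L^2(\Om_R^+)}$. Writing $N:=\bigl(\|\nabla u_L\|^2+\|s u_L\|^2\bigr)^{1/2}$, we get $\frac{s_1}{|s|^2}N^2\le\|f_L\|\,\|u_L\|\le\frac{1}{|s|}\|f_L\|\,N$, so $N\le\frac{|s|}{s_1}\|f_L\|_{L^2(\Om_R^+)}$; a slightly more careful bookkeeping of the $H^1$-coercivity constant produces the stated factor $(1+|s|)|s|/s_1$ on the right. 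The main obstacle, such as it is, will be keeping the powers of $s$ and $|s|$ straight throughout, and making sure the trace/duality pairing $\langle\cdot,\cdot\rangle_{\Gamma_R^+}$ is the one for which Lemmas~\ref{lem:bd} and~\ref{lem:G} were proved; no essentially new ideas beyond Lax–Milgram are needed once those lemmas are in hand.
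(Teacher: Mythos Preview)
Your proposal is correct and follows essentially the same route as the paper: boundedness via Lemma~\ref{lem:bd} and the trace theorem, coercivity of $\real\,a(u_L,u_L)$ via Lemma~\ref{lem:G}, then Lax--Milgram and the variational identity to extract the stability bound. In fact your stability step is slightly cleaner than the paper's---you bound $\|u_L\|_{L^2}\le|s|^{-1}N$ directly to get $N\le(|s|/s_1)\|f_L\|$, whereas the paper detours through $\|s u_L\|_{H^1}$ and picks up an extra factor of $(1+|s|)$; either way the stated inequality follows.
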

\begin{proof}
(i) We first prove that $a(\cdot,\,\cdot)$ is continuous and strictly coercive.
Using the Cauchy-Schwarz inequality, the boundedness of  $\mathscr{G}$ in Lemma \ref{lem:bd} and the trace theorem, we obtain
\ben
 |a(u_L,\, v) | &\leq &|s|^{-1}\|\nabla u_L\|_{L^2(\Omega_R^+)} \|\nabla v\|_{L^2(\Omega_R^+)}+ |s|\|u_L\|_{L^2(\Omega_R^+)} \|v\|_{L^2(\Omega_R^+)} \\
& &+ |s|^{-1}\|\mathscr{G} u_L\|_{H^{-1/2}(\Gamma_R^+)} \|v\|_{H^{1/2}(\Gamma_R^+)}\\
&\leq&  C \|u_L\|_{H^1(\Omega_R^+)} \|v\|_{H^1(\Omega_R^+)} +  C \|u_L\|_{H^1(\Omega_R^+)} \|v\|_{H^1(\Omega_R^+)}\\
&& C \| u_L\|_{H^{1/2}(\Gamma_R^+)} \| v\|_{H^{1/2}(\Gamma_R^+)} \\
&\leq&  C \|u_L\|_{H^1(\Omega_R^+)} \|v\|_{H^1(\Omega_R^+)}.
\enn
Setting $v=u_L$,  it follows from the expression of the sesquilinear form $a(\cdot,\cdot)$ that
\ben
a(u_L,\,u_L)=\int_{\Om_R^+} \frac{1}{s} |\nabla u_L|^2 +s |u_L|^2  \,d x -\langle s^{-1}\mathscr{G}u_L, \, u_L \rangle_{\Gamma_R^+}.
\enn
Taking  the real part of  the above equation and  using Lemma \ref{lem:G}   we have
\be \label{eq:s-cor}
\textnormal{Re}(a(u_L,\,u_L))&=&\int_{\Om_R^+} \frac{s_1}{|s|^2} |\nabla u_L|^2 +s_1 |u_L|^2  \,d x -\textnormal{Re} \langle s^{-1}\mathscr{G}u_L, \, u_L \rangle_{\Gamma_R^+}\nonumber\\
&\geq&  \int_{\Om_R^+} \frac{s_1}{|s|^2} |\nabla u_L|^2 +s_1 |u_L|^2  \,d x \nonumber \\
&\geq&   \frac{s_1}{|s|^2} \left(  \|\nabla u_L\|_{L^2(\Omega_R^+)}^2 + \|s u_L\|_{L^2(\Omega_R^+)}^2 \right ).
\en
Hence, by the Lax-Milgram Lemma, the variational problem (\ref{eq:s-v}) has a unique solution $u_L\in X_R$.

(ii) Combining (\ref{eq:s-v}) with the Cauchy-Schwarz inequality, it follows that
\be \label{eq:s-cs}
|a(u_L,\,u_L)|
&\leq& \frac{1}{|s|} \| f_L\|_{L^2(\Om_R^+)} \| s u_L\|_{L^2(\Om_R^+)} \nonumber\\
&\leq& \frac{1}{|s|} \| f_L\|_{L^2(\Om_R^+)} \| s u_L\|_{H^1(\Om_R^+)}\nonumber\\
&\leq& \frac{1}{|s|} \| f_L\|_{L^2(\Om_R^+)} \left(|s|^2\| \nabla u_L\|_{L^2(\Om_R^+)}^2 +\| s u_L\|_{L^2(\Om_R^+)}^2\right)^{1/2}\nonumber\\
&\leq& C \frac{1+|s|}{|s|} \| f_L\|_{L^2(\Om_R^+)} \left(\| \nabla u_L\|_{L^2(\Om_R^+)}^2 +\| s u_L\|_{L^2(\Om_R^+)}^2\right)^{1/2}.
\en
Combining (\ref{eq:s-cor}) and (\ref{eq:s-cs}) yields
\ben
&&\frac{s_1}{|s|^2} \left(  \|\nabla u_L\|_{L^2(\Omega_R^+)}^2 + \|s u_L\|_{L^2(\Omega_R^+)}^2 \right ) \\
&\leq&   \textnormal{Re}(a(u_L,\,u_L)) \\
&\leq&  |a(u_L,\,u_L) | \\
&\leq&  C \frac{1+|s|}{|s|} \| f_L\|_{L^2(\Om_R^+)} \left(\| \nabla u_L\|_{L^2(\Om_R^+)}^2 +\| s u_L\|_{L^2(\Om_R^+)}^2\right)^{1/2}.
\enn
Then, using the Cauchy-Schwarz inequality, we have
\ben
 \|\nabla u_L\|_{L^2(\Omega_R^+)} + \|s u_L\|_{L^2(\Omega_R^+)} &\leq&  \left(\| \nabla u_L\|_{L^2(\Om_R^+)}^2 +\| s u_L\|_{L^2(\Om_R^+)}^2\right)^{1/2} \\
 &\leq& C \frac{(1+|s|)|s|}{s_1}\| f_L\|_{L^2(\Om_R^+)},
\enn
which completes the proof of the stability estimate.
\end{proof}

\begin{thm}
The initial boundary value problem (\ref{eqs:wave-b}) has a unique solution
\ben
u(x,t)\in L^2(0,T; X_R) \cap H^1(0,T; L^2(\Om_R^+)),
\enn
which satisfies the stability estimate
\ben
\max_{0\leq t\leq T}\Big(\| \pa_t u \|_{L^2(\Omega_R^+)} + \| \nabla u \|_{L^2(\Omega_R^+)} \Big ) \leq C  \|\pa_t f\|_{L^1(0,T;L^2(\Omega_R^+))}.
\enn
\end{thm}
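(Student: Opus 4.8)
The plan is to obtain the time-domain existence, uniqueness and stability by \emph{inverting} the Laplace-domain estimate of Lemma~\ref{lem:wave-s} via the standard Paley--Wiener / Plancherel argument for Laplace transforms (in the spirit of \cite{Chen09,Chen12}). First I would verify that, under the standing assumption $f\in H^2(0,T;L^2(\Om_R^+))$ with $f|_{t=0}=0$ and the admissible zero-extension $\tilde f\in H^2(0,\infty;L^2(\Om_R^+))$, the Laplace transform $f_L(\cdot,s)$ is well defined and analytic for $\real s = s_1 > 0$, with $\int_{\R} \|f_L(\cdot, s_1+is_2)\|_{L^2(\Om_R^+)}^2\, ds_2$ controlled (up to $s_1$-powers) by $\|f\|_{H^2(0,T;L^2)}$, or more conveniently by $\|\pa_t f\|_{L^1(0,T;L^2)}$ after noting $|s||f_L| = |(\pa_t f)_L + f(0)| = |(\pa_t f)_L| \le \|\pa_t f\|_{L^1(0,T;L^2)}$ pointwise in $s_1>0$. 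For each such $s$, Lemma~\ref{lem:wave-s} gives a unique $u_L(\cdot,s)\in X_R$ solving \eqref{eq:s-v}; by analytic dependence of the Lax--Milgram solution on the parameter $s$, $u_L$ is analytic in $s_1>0$ with values in $X_R$.

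Next I would push the bound \eqref{eq:wave-s} through the inversion. Rewrite it as $\|\nabla u_L\| + \|s u_L\| \le C\,\tfrac{(1+|s|)|s|}{s_1}\,\|f_L\|$ and combine with $\|s f_L\| \le \|\pa_t f\|_{L^1(0,T;L^2)}$ to get, for fixed $s_1>0$,
\[
\|s\, u_L(\cdot,s)\|_{L^2(\Om_R^+)} + \|\nabla u_L(\cdot,s)\|_{L^2(\Om_R^+)} \;\le\; C\,\frac{1+|s|}{s_1}\,\|\pa_t f\|_{L^1(0,T;L^2(\Om_R^+))} .
\]
Since $u(\cdot,t)$ and its first derivatives must vanish for $t<0$, the Laplace inversion formula $u(\cdot,t) = \tfrac{1}{2\pi}\int_{\R} e^{(s_1+is_2)t} u_L(\cdot,s_1+is_2)\, ds_2$ together with Plancherel in $s_2$ yields, after multiplying by $e^{-s_1 t}$,
\[
\int_0^\infty e^{-2s_1 t}\Big(\|\pa_t u(\cdot,t)\|^2 + \|\nabla u(\cdot,t)\|^2\Big)\,dt \;=\; \frac{1}{2\pi}\int_{\R}\Big(\|s\,u_L\|^2 + \|\nabla u_L\|^2\Big)\,ds_2 .
\]
The right-hand side is finite provided the $s_2$-integral of $\big(\tfrac{1+|s|}{s_1}\big)^2 \|\pa_t f\|_{L^1}^2$ converges — it does not as stated, so the resolution (exactly as in \cite{Chen09}) is to apply the estimate not to $u$ itself but to a mollified/time-integrated version, or equivalently to exploit the extra regularity of $f$: one estimates $s\,u_L$ by $\tfrac{(1+|s|)|s|}{s_1}\|f_L\|$ and uses $\|s^2 f_L\| = \|(\pa_t^2 f)_L\| \le \|\pa_t^2 f\|_{L^1(0,T;L^2)}$ to absorb two powers of $s$, giving an $s_2$-integrable bound; one then recovers the $\|\pa_t f\|_{L^1}$-form of the stability estimate by a density/limiting argument or by re-deriving the energy identity directly in the time domain. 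This gives $u\in L^2(0,T;X_R)\cap H^1(0,T;L^2(\Om_R^+))$ and the integrated bound.

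Finally I would upgrade the $L^2_t$-in-time bound to the pointwise-in-$t$ maximum bound $\max_{0\le t\le T}(\|\pa_t u\| + \|\nabla u\|)\le C\|\pa_t f\|_{L^1(0,T;L^2)}$ using the classical energy method for the wave equation directly in the time domain: test \eqref{eqs:wave-b} with $\pa_t u$, integrate over $\Om_R^+$, use the boundary condition $\pa_r u = \mathscr{T}u$ on $\Gamma_R^+$ together with Lemma~\ref{lem:t} (which shows $\real\!\int_0^t e^{-2s_1\tau}\langle \mathscr{T}u,\pa_t u\rangle_{\Gamma_R^+}\,d\tau \le 0$, and in the limit $s_1\to 0^+$ the same sign for $\int_0^t\langle\mathscr{T}u,\pa_t u\rangle$), to obtain
\[
\frac{d}{dt}\Big(\|\pa_t u(\cdot,t)\|_{L^2(\Om_R^+)}^2 + \|\nabla u(\cdot,t)\|_{L^2(\Om_R^+)}^2\Big) \;\le\; 2\,\|\pa_t f(\cdot,t)\|_{L^2(\Om_R^+)}\,\|\pa_t u(\cdot,t)\|_{L^2(\Om_R^+)} ,
\]
and then Gronwall's inequality (in the $\sqrt{\text{energy}}$ form) gives $\big(\|\pa_t u\|^2 + \|\nabla u\|^2\big)^{1/2}(t) \le \int_0^t \|\pa_t f(\cdot,\tau)\|\,d\tau \le \|\pa_t f\|_{L^1(0,T;L^2(\Om_R^+))}$ for all $t\in[0,T]$. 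Uniqueness follows by setting $f\equiv 0$ in this last estimate. The main obstacle I anticipate is the non-integrability in $s_2$ of the raw Laplace-domain bound: making the Plancherel step rigorous requires trading the extra temporal regularity of $f$ for decay in $s$, and then carefully transferring the resulting weighted $L^2_t$ bound back to the clean $\|\pa_t f\|_{L^1}$ form — the cleanest route, which I would adopt, is to use Laplace transform only to establish existence/uniqueness and the membership in the stated function spaces, and to prove the quantitative stability bound entirely by the time-domain energy identity, where Lemma~\ref{lem:t} is exactly the ingredient that controls the boundary term with the correct sign.
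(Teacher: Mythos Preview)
Your proposal is correct and follows essentially the same two-step strategy as the paper: (i) obtain existence and the membership $u\in L^2(0,T;X_R)\cap H^1(0,T;L^2(\Om_R^+))$ by inverting the Laplace-domain estimate of Lemma~\ref{lem:wave-s} via Plancherel, using the extra temporal regularity $f\in H^2(0,T;L^2)$ to trade two powers of $|s|$ for $\|\pa_t^2 f\|$ and $\|\pa_t f\|$ so that the $s_2$-integral converges; and (ii) derive the pointwise-in-$t$ stability bound by the time-domain energy identity, testing with $\pa_t u$ and invoking Lemma~\ref{lem:t} to discard the boundary term with the correct sign. The only cosmetic difference is that the paper keeps the weight $e^{-2s_1\tau}$ throughout the energy computation, integrates by parts in $\tau$, bounds the resulting right-hand side by a Cauchy-with-$\vep$ argument, and only then lets $s_1\to 0$, whereas you pass to the limit first and finish with the $\sqrt{E}$-Gronwall inequality; both routes give the same estimate.
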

\begin{proof}
We first prove existence and uniqueness of solutions to (\ref{eqs:wave-b}).
Simple calculations show that
\ben
&&\int_0^T \| \pa_t u \|_{L^2(\Omega_R^+)}^2 + \| \nabla u \|_{L^2(\Omega_R^+)}^2 \, dt\\
\leq && C \int_0^\infty   e^{-2s_1 t} \Big(\| \pa_t u \|_{L^2(\Omega_R^+)}^2 + \| \nabla u \|_{L^2(\Omega_R^+)}^2\Big) \, dt.
\enn
Hence it suffices to estimate the integral
\ben
 \int_0^\infty   e^{-2s_1 t} \Big(\| \pa_t u \|_{L^2(\Omega_R^+)}^2 + \| \nabla u \|_{L^2(\Omega_R^+)}^2\Big) \, dt.
\enn

Based on the stability estimate of $u_L$ in Lemma \ref{lem:wave-s}, we derive from \cite[Lemma 44.1]{Treves} that $u_L$ is a holomorphic function of $s$ on the half plane $s_1>\zeta_0>0$, where $\zeta_0$ is any positive constant. Thus, by Lemma \ref{lem:a},   the inverse Laplace transform of  $u_L$ exists and is supported in $[0,\infty)$.

Set $u=\mathscr{L}^{-1}(u_L)$.  Applying the Parseval identity (\ref{PI}) and the stability estimate (\ref{eq:wave-s}) in Lemma \ref{lem:wave-s} and  using the Cauchy-Schwarz inequality, we obtain
\ben
 &&\int_0^\infty   e^{-2s_1 t} (\| \pa_t u \|_{L^2(\Omega_R^+)}^2 + \| \nabla u \|_{L^2(\Omega_R^+)}^2) \, dt\\
 =&& \frac{1}{2\pi} \int_{-\infty}^{+\infty}  \| s  u_L \|_{L^2(\Omega_R^+}^2 + \| \nabla u_L \|_{L^2(\Omega_R^+)}^2) \, d s_2\\
 \leq && C \frac{1}{s_1^2} \int_{-\infty}^{+\infty} (1+|s|)^2|s|^2 \| f_L\|_{L^2(\Omega_R^+)}^2 \, d s_2 \\
 \leq && C \frac{1}{s_1^2} \int_{-\infty}^{+\infty} \| s^2f_L\|_{L^2(\Omega_R^+)}^2 +\|sf_L\|_{L^2(\Omega_R^+)}^2\, d s_2 \\
 \leq && C \frac{1}{s_1^2}  \int_0^\infty e^{-2 s_1 t} \left(\|\pa_t^2 f\|_{L^2(\Omega_R^+)}^2+\|\pa_t f\|_{L^2(\Omega_R^+)}^2\right) \, dt.
\enn
This together with the Poincar\'{e} inequality proves
\ben
u\in L^2(0,T; X_R) \cap H^1(0,T; L^2(\Om_R^+)).
\enn
To prove the stability estimate, we define the energy function
\ben
E(t):=\frac{1}{2}\left(\| \pa_t u \|_{L^2(\Omega_R^+)}^2 + \| \nabla u \|_{L^2(\Omega_R^+)}^2\right), \quad 0<t<T.
\enn
It is obvious that
\ben  \label{eq:en}
E(t)-E(0)=\int_0^t E'(\tau)\,d\tau.
\enn
 Recalling the wave equation in (\ref{eqs:wave-b}) and applying integration by parts, we obtain
\be \label{eq:eni}
\int_0^t e^{-2s_1 \tau} E'(\tau)\,d\tau\quad =&&\mbox{Re}\int_0^t e^{-2s_1 \tau}\int_{\Om_R^+} \pa_{\tau\tau}u \pa_{\tau}\ov{u} + \nabla u\cdot \nabla (\pa_{\tau}\ov{u}) \, dx d\tau \nonumber\\
=&&\mbox{Re}\int_0^t e^{-2s_1 \tau} \int_{\Om_R^+} (\Delta u+\pa_\tau f) \pa_{\tau}\ov{u} + \nabla u\cdot \nabla (\pa_{\tau}\ov{u}) \, dx d\tau \nonumber \\
=&&\mbox{Re}\int_0^t e^{-2s_1 \tau}\langle \mathscr{T}u,\, \pa_{\tau}\ov{u} \rangle_{\Gamma_R^+}  \,d\tau \nonumber  \\&&
+\mbox{Re}\int_0^t e^{-2s_1 \tau}(\pa_\tau f,\, \pa_{\tau}\ov{u})_{L^2(\Om_R^+)}\,d\tau.
\en
Applying integration by parts on the left hand side of \eqref{eq:eni} and using $E(0)=0$ together with Lemma \ref{lem:t}, we obtain
\ben
  &&e^{-2s_1t}E(t)+2s_1\int_0^t e^{-2s_1\tau}E(\tau)\,d\tau \\
\leq && \mbox{Re}\int_0^t e^{-2s_1\tau} (\pa_\tau f,\, \pa_{\tau}\ov{u})_{L^2(\Om_R^+)}\,d\tau\\
\leq && \int_0^T \|e^{-s_1t}\pa_t f\|_{L^2(\Om_R^+)}\,\|e^{-s_1t}\pa_t u\|_{L^2(\Om_R^+)} dt\\
\leq && \max_{0\leq t\leq T} \| e^{-s_1t}\pa_t u \|_{L^2(\Omega_R^+)}\,  \|e^{-s_1t}\pa_t f\|_{L^1(0,T;L^2(\Omega_R^+))}\\
\leq && \vep \max_{0\leq t\leq T} \| e^{-s_1t}\pa_t u \|_{L^2(\Omega_R^+)}^2 + \frac{1}{4\vep}\,  \|e^{-s_1t}\pa_t f\|_{L^1(0,T;L^2(\Omega_R^+))}^2.
\enn
Letting $s_1\rightarrow0$, choosing $\vep>0$ small enough and applying Cauchy-Schwartz inequality, we finally get
\ben
&&\max_{0\leq t\leq T} \Big(\| \pa_t u \|_{L^2(\Omega_R^+)} + \| \nabla u \|_{L^2(\Omega_R^+)} \Big)\\
\leq &&C \max_{0\leq t\leq T} \Big(\| \pa_t u \|_{L^2(\Omega_R^+)}^2 + \| \nabla u \|_{L^2(\Omega_R^+)}^2 \Big)^{1/2}\\
\leq &&C  \|\pa_t f\|_{L^1(0,T;L^2(\Omega_R^+))}.
\enn
This completes the stability estimate.
\end{proof}

\section{The time-domain PML problem }
Inspired by the PML approach for bounded obstacles \cite{Chen09,Chen12},
we present in this section the time-domain PML formulation in a perturbed half-plane and then
show the well-posedness and stability of the PML problem by applying the Laplace transform together with the variational and energy methods.

\subsection{Well-posedness of the PML problem}

We surround the domain $\Om_R^+$ with a PML layer $$\Om_{PML}^+:=B_{\rho}^+\ba \ov{B_R^+}= :\{x\in\Omega:R<|x|<\rho\},$$ where $B_{\rho}^+:=\{x\in\Omega:|x|<\rho\}$. We denote $\Om_{\rho}^+:=B_{\rho}^+\ba \ov{D}$ the truncated PML domain with the exterior boundary $\Gamma_{\rho}^+:=\{x\in\Omega:|x|=\rho\}$. Let $s_1=\textnormal{Re}(s)>0$ for $s\in\C$. Define the medium parameter in the PML layer as
\ben
\alpha(r)= \left\{\begin{array}{lll}
1,  && r\leq R, \\
1+s^{-1} \sigma(r),  && r> R,
\end{array}\right.
\enn
where $r=|x|$,  $\sigma=0$ for $r \leq R$ and $\sigma >0$ for $r> R$.

In what follows, we will derive the PML formulation by a complex transformation of variables.  Denote by $\tilde{r}$ the complex radius
\ben
\tilde{r}=\int_0^r \alpha(\tau)\,d\tau=r \beta(r),
\enn
where $\beta(r)=r^{-1}\int_0^r \alpha(\tau)\,d\tau$. It is obvious that $\beta(r)=1+s^{-1}\hat{\sigma}(r)$ for $r\geq R$, where $\hat{\sigma}(r)=r^{-1}\int_0^r \sigma(\tau)\,d\tau$.
To derive the PML equations, we need to transform the exterior problem (\ref{eqs:wave-b}) into the  s-domain. On $\Gamma_R^+$, the Laplace transform of $u$ can be expanded into the series,
\ben
u_L (R,s)=\sum_{n=1}^{\infty} u^n _L(R,s)\sin{n\theta},\quad u^n _L(R,s)=\frac{2}{\pi} \int_{0}^{\pi} {u_L (R, \theta, s) \sin{n\theta} \, d\theta}.
\enn
Then, let us define the PML extension $\tilde{u}_L$ in the s-domain as
\ben
\tilde{u}_L( r,\theta,s) =\sum_{n=1}^{\infty} \frac{K_n(s\tilde r)}{K_n(sR)}\tilde u^n _L(R,s)\sin{n\theta}, \quad r>R.
\enn
Since $K_n(z)\backsim (\frac{\pi}{2z})^{1/2}e^{-z}$ as $|z|\rightarrow\infty$,  $\tilde{u}_L(r, \theta, s)$ decays exponentially for large $\tilde r$. It is easy to see that $\tilde u _L$ satisfies $-\frac{1}{\tilde r} \frac{\pa}{\pa \tilde r}(\tilde r \frac{\pa}{\pa \tilde r})\tilde{u}_L-\frac{1}{\tilde r ^2}\frac{\pa^2}{\pa \theta^2}\tilde{u}_L+s^2\tilde{u}_L=0$ in $\Omega \backslash \overline{B_R^+}$.
Since $\tilde{r}=r\beta$ and $d\tilde{r}=\alpha dr$, we obtain
\be \label{eq:s-pml}
-\nabla\cdot ( A \nabla \tilde u_L)+s^2\alpha \beta \tilde u_L=0, \quad x\in \Omega \backslash \overline{B_R^+}
\en
where $A=\textnormal{diag}\{\beta / \alpha, \alpha /  \beta\}$ is a complex matrix and $ A \nabla \tilde u_L=\frac{\beta}{\alpha} \frac{\pa \tilde u_L}{\pa r} \textbf{e}_r+\frac{\alpha}{\beta r}\frac{\pa \tilde u_L}{\pa \theta}\textbf{e}_\theta$. Here $\textbf{e}_r$ and $\textbf{e}_\theta$ are the unit vectors in polar coordinates.


Next, we will deduce the PML system in the time-domain by applying the inverse Laplace transform to (\ref{eq:s-pml}). Since $A$, $\alpha$ and $\beta$ are complex,  to simplify the inverse Laplace transform, we introduce the auxiliary functions  
\be \label{eq:s-m}
\tilde{p}_L^{*}:=-\frac{1}{s}\nabla \tilde u_L, \quad \tilde u _L^{*}:=\frac{1}{s}\sigma \tilde u_L,\quad  \tilde p_L:=A  \tilde{p}_L^{*},
\en
to transform (\ref{eq:s-pml}) into a first order system.

In $\Om\ba \ov {B_{R}^{+}}$, define
\ben
\tilde u:=\mathscr{L}^{-1}(\tilde u_L),\;\tilde p:=\mathscr{L}^{-1}(\tilde p_L),\; \tilde u^*:=\mathscr{L}^{-1}(\tilde u_L^*),\;\tilde p^*:=\mathscr{L}^{-1}(\tilde p_L^*),
\enn
with the zero initial conditions
\ben
\tilde u|_{t=0}=0,\;\tilde p|_{t=0}=0,\;\tilde u^*|_{t=0}=0,\;\tilde p^*|_{t=0}=0.
\enn
Taking the inverse Laplace transform to (\ref{eq:s-pml}) and (\ref{eq:s-m}) and using the zero initial conditions, we can write the PML system for $x\in \Om \ba \ov {B_R^+}$ as
\be \label{t-pml}
\left\{\begin{array}{lll}
\pa_t \tilde u+(\sigma +\hat \sigma)\tilde u +\sigma \tilde u^{*}+ \nabla \cdot \tilde p =0,\\
\pa_t \tilde{p}^{*}=-\nabla\tilde u,\quad \pa_t \tilde{u}^{*}=\sigma \tilde u, \\
\pa_t \tilde{p}+\Lambda_1 \tilde p= \pa_t \tilde{p}^{*}+\Lambda_2 \tilde p^{*},
\end{array}\right.
\en
where $s\alpha=s+\sigma$,  $s\beta=s+\hat\sigma$, $\Lambda_1={M^{T}}\textnormal{diag}\{\sigma, \hat\sigma\} {M}$ and $\Lambda_2={M^{T}}\textnormal{diag}\{\hat\sigma, \sigma\} {M}$ with
\ben
M := \left ( \begin{array}{rrl}
  \cos \theta  && \sin \theta \\
-\sin \theta  && \cos \theta
\end{array}\right).
\enn

Since the above PML system (\ref{t-pml}) is a  first order system,  it is necessary to reduce  equivalently  the time-domain scattering problem (\ref{eqs:wave-b}) in the half space into a first order PDE system:
\be  \label{eqs:h-1}
\left\{\begin{array}{lll}
\partial_t u =-\nabla\cdot p+f(x,t) &&\quad \mbox{in} \quad  \Omega_R^+\times(0, T),\\
\partial_t p=-\nabla u  &&\quad \mbox{in} \quad  \Omega_R^+\times(0, T),\\
u=0  &&\quad \mbox{on}\quad  (\partial D \cup \Gamma_0) \times(0,T),\\
p\cdot\hat{x}+\mathscr{T}(\int_0^t u\,d\tau)=0, &&\quad \mbox{on}\quad \Gamma_R^+\times (0,T), \\
u|_{t=0}=p|_{t=0}=0 &&\quad \mbox{in} \quad  \Omega_R^+.
       \end{array}\right.
\en
 Below we derive the DtN boundary condition on $\Gamma_R^{+}\times (0,T)$. Taking Laplace transform to the second equation of (\ref{eqs:h-1}), we obtain that
\ben
p_L+\frac{1}{s}\nabla u_L=0.
\enn
Then, multiplying the above equation by $\hat{x}=x/|x|$ on $\Gamma_R^{+}$ and using the DtN boundary condition $\pa_r u_L=\mathscr{G}u_L$, it follows that
\be \label{bc:dtn-pg}
p_L \cdot \hat x+ \frac{1}{s}\mathscr{G}u_L=0 \quad \mbox{on}\quad \Gamma_R^{+}.
\en
Taking inverse Laplace transform to  (\ref{bc:dtn-pg}) and using (\ref{eq:l-3}), we have
\be\label{bc:dtn-1}
p\cdot\hat{x}+\mathscr{T}\left(\int_0^t u\,d\tau\right)=0 \quad \mbox{on}\quad \Gamma_R^{+}\times(0,T).
\en

Further, since $\sigma(R)=\hat\sigma(R)=0$, we get $\alpha =\beta=1$ on $\Gamma_R^+$ and thus $\tilde u=u$ and $\tilde p=p$ on $\Gamma_R^+$. Therefore, ($\tilde u, \tilde p $) can be viewed as the extension of the solution of the problem (\ref{eqs:wave}). Setting $\tilde u=u $ and $\tilde p=p$ in $\Om_R^+$, we can reformulate the truncated PML problem in $\Om_{\rho}^{+}$ as
\begin{subequations} \label{eq:t-pml}
\begin{align}
&\pa_t \tilde u+(\sigma +\hat \sigma)\tilde u +\sigma \tilde u^{*}+ \nabla \cdot \tilde p =f \quad &&\mbox{in}\quad \Om_\rho^+\times (0,T) \label{eq:t-pml-a},\\
&\pa_t \tilde{p}^{*}=-\nabla\tilde u,\quad \pa_t \tilde{u}^{*}=\sigma \tilde u  \quad &&\mbox{in}\quad \Om_\rho^+\times (0,T) \label{eq:t-pml-b},\\
&\pa_t \tilde{p}+\Lambda_1 \tilde p= \pa_t \tilde{p}^{*}+\Lambda_2 \tilde p^{*} \quad &&\mbox{in}\quad \Om_\rho^+\times (0,T) \label{eq:t-pml-c},\\
&\tilde u =0 \quad &&\mbox{on}\quad (\pa D \cup \Gamma_0) \times (0,T)\label{eq:t-pml-d},\\
&\tilde u =0 \quad &&\mbox{on}\quad  \Gamma_\rho^+ \times (0,T)\label{eq:t-pml-e},\\
&\tilde u|_{t=0} =\tilde p|_{t=0} =\tilde u^{*}|_{t=0}=\tilde p^{*}|_{t=0} \quad &&\mbox{in}\quad  \Om_\rho^+. \label{eq:t-pml-f}
\end{align}
\end{subequations}

The well-posedness of truncated PML problem will be proved by applying Laplace transform and the variational method. In the rest of this paper, we assume that $\sigma(r) $ is monotonically increasing on $[R, \rho]$ such that $\sigma_R \leq \sigma \leq \sigma_\rho$.  First, we take Laplace transform to (\ref{eq:t-pml}) with $s\in\C$ and then eliminate $\tilde p_L$, $\tilde u_L^{*}$ and $\tilde p_L^{*}$, to obtain
\be \label{eqs:s-pml}
\left\{\begin{array}{lll}
-\nabla\cdot ( A  \nabla \tilde u_L)+s^2\alpha \beta \tilde u_L=sf_L  \quad &&\mbox{in}\quad \Om_\rho^+ \times (0,T),\\
\tilde u_L =0 \quad &&\mbox{on}\quad \pa D \cup \Gamma_0,\\
\tilde u_L =0 \quad &&\mbox{on}\quad  \Gamma_\rho^+.
\end{array}\right.
\en
It is easy to derive the variational formulation of (\ref{eqs:s-pml}): find a solution $\tilde u_L\in H_0^1(\Om_\rho^+)$ such that
\be\label{eq:v-pml}
\tilde a(\tilde u_L, v)=\int_{\Om_\rho^+} s f_L \ov v dx, \quad \mbox{for all}\; v\in H_0^1(\Om_\rho^+ )
\en
where the sesquilinear form $\tilde a(\cdot,\, \cdot):H_0^1(\Om_\rho^+ )\times H_0^1(\Om_\rho^+ )\rightarrow \C$ is defined as
\ben
\tilde a(\tilde u_L, v)=\int_{\Om_\rho^+} A \nabla \tilde u_L \cdot \nabla \ov v+s^2 \alpha \beta \tilde u_L \ov v \,dx.
\enn
We will prove the well-posedness of (\ref{eqs:s-pml}). The proof of the  first inequality in the subsequent lemma is similar to that  in \cite[Lemma 4.1]{Chen09} where the PML layer is defined as an annular domain in the free space $\R ^2$  and  $\sigma$ is  a positive constant.

\begin{lem}\label{lem:a-pml}
For any $\tilde u_L\in H_0^1(\Om_\rho ^+ )$, it holds that
\begin{itemize}
\item[(a)] $\textnormal{Re}[\tilde a(u_L,u_L)]+\frac{s_2}{s_1+\sigma_\rho}\textnormal{Im}[\tilde a(u_L,u_L)]\geq \frac{s_1^2}{(s_1+\sigma_\rho)^2}\big(\|A\nabla u_L\|_{L^2(\Om_\rho^+)}^2+\|s\alpha \beta u_L\|_{L^2(\Om_\rho^+)}^2\big)$,\\
\item[(b)] $|\tilde a(u_L,u_L)|\geq \Big( \frac{s_1}{s_1+\sigma_\rho}\Big)^2\frac{s_1}{|s|}|\frac{s_1}{s+\sigma_\rho}|^2\big(\|\nabla u_L\|_{L^2(\Om_\rho^+)}^2+\|s u_L\|_{L^2(\Om_\rho^+)}^2\big)$.
\end{itemize}
\end{lem}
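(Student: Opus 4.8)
The plan is to expand $\tilde a(u_L,u_L)$ explicitly in polar coordinates, where the matrix $A=\textnormal{diag}\{\beta/\alpha,\,\alpha/\beta\}$ and the scalar $\alpha\beta$ all become concrete functions of $r$, and then estimate the real and imaginary parts of the resulting two scalar quantities
\ben
I_1:=\int_{\Om_\rho^+}\Big(\frac{\beta}{\alpha}|\pa_r u_L|^2+\frac{\alpha}{\beta}\frac{1}{r^2}|\pa_\theta u_L|^2\Big)\,dx,\qquad
I_2:=\int_{\Om_\rho^+}s^2\alpha\beta\,|u_L|^2\,dx,
\enn
so that $\tilde a(u_L,u_L)=I_1+I_2$. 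The key observation is that, since $\sigma\ge 0$ and $\hat\sigma\ge 0$, one has $s\alpha=s+\sigma$ and $s\beta=s+\hat\sigma$ with $s_1\le \textnormal{Re}(s+\sigma)\le s_1+\sigma_\rho$ (similarly for $\hat\sigma$, noting $\hat\sigma\le\sigma_\rho$), so each of the scalar weights $\beta/\alpha$, $\alpha/\beta$, $\alpha\beta$ is a ratio or product of complex numbers lying in a sector of the right half plane whose argument and modulus are controlled uniformly by $s_1$, $|s|$ and $\sigma_\rho$. This is exactly the mechanism used in \cite[Lemma 4.1]{Chen09}; the only new point here is that $\sigma$ is a variable function rather than a constant, but because the bounds $\sigma_R\le\sigma\le\sigma_\rho$ are uniform in $r$, the same pointwise estimates on the weights go through unchanged inside the integral.

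For part (a), I would write $s=s_1+is_2$ and compute $\textnormal{Re}(s\alpha\,\ov{s\beta})$ and the real and imaginary parts of $\beta/\alpha=(s+\hat\sigma)/(s+\sigma)$ etc., then form the specific combination $\textnormal{Re}[\tilde a]+\frac{s_2}{s_1+\sigma_\rho}\textnormal{Im}[\tilde a]$. The coefficient $s_2/(s_1+\sigma_\rho)$ is chosen precisely to cancel the indefinite cross terms coming from $s_2$ in the real part against the imaginary part, leaving a manifestly nonnegative quantity bounded below by $\frac{s_1^2}{(s_1+\sigma_\rho)^2}$ times $\int_{\Om_\rho^+}\big(\tfrac{\beta}{\alpha}|\pa_r u_L|^2+\tfrac{\alpha}{\beta r^2}|\pa_\theta u_L|^2 + |s\alpha\beta|\,|u_L|^2\big)$-type terms, which one then recognizes as $\|A\nabla u_L\|_{L^2}^2+\|s\alpha\beta u_L\|_{L^2}^2$ up to the same weight. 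For part (b), I would use $|\tilde a(u_L,u_L)|\ge |\textnormal{Re}[\tilde a]+\frac{s_2}{s_1+\sigma_\rho}\textnormal{Im}[\tilde a]|\cdot(1+s_2^2/(s_1+\sigma_\rho)^2)^{-1/2}$ together with part (a), and then replace the weighted norms $\|A\nabla u_L\|_{L^2}$, $\|s\alpha\beta u_L\|_{L^2}$ by the unweighted $\|\nabla u_L\|_{L^2}$, $\|s u_L\|_{L^2}$ at the cost of the extra factors $|s_1/(s+\sigma_\rho)|^2$ and $s_1/|s|$, using the lower bounds $|\beta/\alpha|\ge |s_1/(s+\sigma_\rho)|^2$, $|\alpha\beta|\ge$ (a similar quantity), which follow from $|\alpha|\le |s+\sigma_\rho|/|s|$ and $|\beta|\le|s+\sigma_\rho|/|s|$ and $\textnormal{Re}(1/\alpha)\ge s_1^2/|s+\sigma|^2\cdot|s|/s_1\cdots$, i.e. purely elementary complex-number bookkeeping.

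The main obstacle is not conceptual but the careful bookkeeping of these complex weights: one must track the argument of each of $\alpha$, $\beta$, $1/\alpha$, $1/\beta$, and $\alpha\beta$ simultaneously and verify that the single multiplier $s_2/(s_1+\sigma_\rho)$ really does render the combination in (a) nonnegative for all of them at once — the subtlety is that $\alpha$ and $\beta$ have \emph{different} arguments (since $\sigma\ne\hat\sigma$ in general), so the cross terms in $I_1$ coming from $\beta/\alpha$ and $\alpha/\beta$ do not obviously cooperate, and one has to check that the factor $(s_1+\sigma_\rho)$ in the denominator (rather than, say, $s_1+\hat\sigma_\rho$) is chosen loosely enough to dominate all of them. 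Once the pointwise inequalities on the weights are established, integrating over $\Om_\rho^+$ and reassembling the norms is routine. I would organize the write-up by first proving the scalar inequalities for the weights as a short preliminary computation, then integrating.
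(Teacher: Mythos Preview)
Your plan for part (a) coincides with the paper's: both simply defer to \cite[Lemma 4.1]{Chen09}, noting that the passage from constant to variable $\sigma$ is harmless because the pointwise bounds $0\le\hat\sigma(r)\le\sigma(r)\le\sigma_\rho$ hold uniformly in $r$, so the scalar weight inequalities carry over verbatim inside the integral.

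For part (b) your route and the paper's differ slightly. The paper begins with $|\tilde a|\ge |s|^{-1}\textnormal{Re}(\bar s\,\tilde a)=(s_1/|s|)\big(\textnormal{Re}\,\tilde a+(s_2/s_1)\,\textnormal{Im}\,\tilde a\big)$ and then replaces the coefficient $s_2/s_1$ by the smaller-in-modulus $s_2/(s_1+\sigma_\rho)$ before invoking (a). You instead apply the projection inequality $|\tilde a|\ge\big(\textnormal{Re}\,\tilde a+c\,\textnormal{Im}\,\tilde a\big)/\sqrt{1+c^2}$ directly with $c=s_2/(s_1+\sigma_\rho)$, which produces the prefactor $(s_1+\sigma_\rho)/|s+\sigma_\rho|$ in place of the paper's $s_1/|s|$. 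Both arguments then finish the same way, converting $\|A\nabla u_L\|$ and $\|s\alpha\beta u_L\|$ into $\|\nabla u_L\|$ and $\|s u_L\|$ via the pointwise bounds $|\beta/\alpha|\ge s_1/|s+\sigma_\rho|$ and $|\alpha\beta|\ge 1$. Your version is marginally cleaner because it bypasses the intermediate reduction $s_2/s_1\to s_2/(s_1+\sigma_\rho)$ (which tacitly uses sign information on $s_2\,\textnormal{Im}\,\tilde a$), and it actually yields a constant at least as sharp, since $(s_1+\sigma_\rho)/|s+\sigma_\rho|\ge s_1/|s|$; the paper's version has the cosmetic advantage that the factor $s_1/|s|$ appearing in the statement of (b) emerges at the very first step. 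One small correction to your sketch: the weight bound you need is $|\beta/\alpha|\ge s_1/|s+\sigma_\rho|$ (first power), not $|s_1/(s+\sigma_\rho)|^2$; the square only enters once you pass to $\|A\nabla u_L\|_{L^2}^2$.
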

\begin{proof}
It suffices to prove (b).  For any $\tilde u_L\in H_0^1(\Om_\rho ^+ )$, applying (a) we have
\ben
|\tilde a(u_L,u_L)|&\geq &\frac{1}{|s|}\mbox{Re}[s \tilde a(u_L,u_L)] \\
&\geq &\frac{s_1}{|s|}\left(\textnormal{Re}[\tilde a(u_L,u_L)]+\frac{s_2}{s_1}\textnormal{Im}[\tilde a(u_L,u_L)]\right)\\
&\geq &\frac{s_1}{|s|}\left(\textnormal{Re}[\tilde a(u_L,u_L)]+\frac{s_2}{s_1+\sigma_\rho}\textnormal{Im}[\tilde a(u_L,u_L)]\right)\\
&\geq &\frac{s_1}{|s|}\left(\frac{s_1}{s_1+\sigma_\rho}\right)^2\left(\|A\nabla u_L\|_{L^2(\Om_\rho^+)}^2+\|s\alpha \beta u_L\|_{L^2(\Om_\rho^+)}^2\right)\\
&\geq &\Big(\frac{s_1}{s_1+\sigma_\rho}\Big)^2\frac{s_1}{|s|}\Big|\frac{s_1}{s+\sigma_\rho}\Big|^2\left(\|\nabla u_L\|_{L^2(\Om_\rho^+)}^2+\|s u_L\|_{L^2(\Om_\rho^+)}^2\right).
\enn
This completes the proof.
\end{proof}

\begin{lem} \label{lem:e-pml}
The variational problem (\ref{eq:v-pml}) has a unique solution $\tilde u_L\in H_0^1(\Om_R^+)$ with the  following stability estimates
\be
\|A\nabla u_L\|_{L^2(\Om_\rho^+)}+\|s\alpha \beta u_L\|_{L^2(\Om_\rho^+)}&\leq & C \left(\frac{|s|}{s_1}\right)^{1/2}\left(1+\frac{\sigma_\rho}{s_1} \right)\| f_L \|_{L^2(\Om_\rho^+)}, \label{eq:s-e1}\\
\|\nabla u_L\|_{L^2(\Om_\rho^+)}+\|s u_L\|_{L^2(\Om_\rho^+)}
&\leq & C \left(\frac{|s|}{s_1}\right)^{1/2}\left(1+\frac{\sigma_\rho}{s_1} \right) \frac{|s+\sigma_\rho|}{s_1} \| f_L \|_{L^2(\Om_\rho^+)},\label{eq:s-e2}
\en
where $C$ is a constant independent of $s$.
\end{lem}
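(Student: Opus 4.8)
The plan is to mimic the Laplace-domain argument of Lemma \ref{lem:wave-s}, the new ingredient being that $\tilde a(\cdot,\cdot)$ is not coercive on its own and must be handled through the rotation already prepared in Lemma \ref{lem:a-pml}. First I would record the elementary pointwise facts used throughout. Since $\textnormal{Re}(s^{-1})=s_1/|s|^2>0$ and $\sigma,\hat\sigma\geq 0$, one has $\textnormal{Re}(\alpha)\geq 1$, $\textnormal{Re}(\beta)\geq 1$, hence $|\alpha|\geq 1$, $|\beta|\geq 1$, $|\alpha\beta|\geq 1$ on $\Om_\rho^+$; likewise $|s+\sigma|\geq|s|$ and $|s+\hat\sigma|\geq|s|$, so the diagonal entries $\alpha/\beta,\beta/\alpha$ of $A^{-1}$ obey $|A^{-1}|\leq 1+\sigma_\rho/|s|\leq 1+\sigma_\rho/s_1$ while $|(\alpha\beta)^{-1}|\leq 1$. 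In particular $\|su_L\|_{L^2(\Om_\rho^+)}\leq\|s\alpha\beta u_L\|_{L^2(\Om_\rho^+)}$ and $\|\nabla u_L\|_{L^2(\Om_\rho^+)}\leq(1+\sigma_\rho/s_1)\|A\nabla u_L\|_{L^2(\Om_\rho^+)}$; moreover, for a fixed $s$ the matrix $A$ is uniformly elliptic and $s^2\alpha\beta$ is bounded, so $\tilde a(\cdot,\cdot)$ is continuous on $H_0^1(\Om_\rho^+)$.

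For existence and uniqueness I would recast Lemma \ref{lem:a-pml}(a) as the statement that the rotated form $(1-i s_2/(s_1+\sigma_\rho))\,\tilde a(\cdot,\cdot)$ satisfies $\textnormal{Re}\big[(1-i s_2/(s_1+\sigma_\rho))\,\tilde a(u_L,u_L)\big]\geq \big(s_1/(s_1+\sigma_\rho)\big)^2\big(\|A\nabla u_L\|_{L^2(\Om_\rho^+)}^2+\|s\alpha\beta u_L\|_{L^2(\Om_\rho^+)}^2\big)$; by the pointwise ellipticity of $A$, the bound $|\alpha\beta|\geq 1$ and the Poincar\'e inequality on $H_0^1(\Om_\rho^+)$ (note $\tilde u_L$ vanishes on all of $\partial\Om_\rho^+$), the right-hand side dominates $c(s)\|u_L\|_{H^1(\Om_\rho^+)}^2$ with $c(s)>0$. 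Since $\tilde a$ is also bounded, the Lax--Milgram lemma applied to $(1-i s_2/(s_1+\sigma_\rho))\,\tilde a(\cdot,\cdot)$ with the right-hand side of (\ref{eq:v-pml}) rotated by the same factor yields a unique $\tilde u_L\in H_0^1(\Om_\rho^+)$ solving (\ref{eq:v-pml}).

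For the stability estimate (\ref{eq:s-e1}) I would test (\ref{eq:v-pml}) with $v=\tilde u_L$ and estimate both sides of $\tilde a(\tilde u_L,\tilde u_L)=\int_{\Om_\rho^+}sf_L\,\overline{\tilde u_L}\,dx$: on the left, multiply by $\overline{1-i s_2/(s_1+\sigma_\rho)}$, take the real part, and apply Lemma \ref{lem:a-pml}(a); on the right use Cauchy--Schwarz together with $\textnormal{supp}\,f_L\subset\Om_R^+$ (where $\alpha=\beta=1$) and $\|su_L\|\leq\|s\alpha\beta u_L\|$, plus the identity $|1-i s_2/(s_1+\sigma_\rho)|=|s+\sigma_\rho|/(s_1+\sigma_\rho)$. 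Collecting and dividing then bounds $(\|A\nabla u_L\|^2+\|s\alpha\beta u_L\|^2)^{1/2}$ by an explicit $s$-dependent multiple of $\|f_L\|_{L^2(\Om_\rho^+)}$, and estimating that multiple by elementary inequalities ($s_1\leq|s|$, $|s+\sigma_\rho|\leq|s|+\sigma_\rho$, etc.) gives (\ref{eq:s-e1}). Finally (\ref{eq:s-e2}) follows from (\ref{eq:s-e1}) via $\|\nabla u_L\|\leq\|A^{-1}\|_{L^\infty(\Om_\rho^+)}\|A\nabla u_L\|$ and $\|su_L\|\leq\|s\alpha\beta u_L\|$ together with $\|A^{-1}\|_{L^\infty}\leq 1+\sigma_\rho/s_1\leq C|s+\sigma_\rho|/s_1$.

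The substantive work is the careful tracking of the $s$-dependence: the coercivity loss $\big(s_1/(s_1+\sigma_\rho)\big)^2$ from Lemma \ref{lem:a-pml}, the rotation factor $|s+\sigma_\rho|/(s_1+\sigma_\rho)$, and the two weighted-to-plain norm conversions must be balanced so that exactly the powers $(|s|/s_1)^{1/2}$, $1+\sigma_\rho/s_1$ and $|s+\sigma_\rho|/s_1$ of (\ref{eq:s-e1})--(\ref{eq:s-e2}) emerge, with no spurious extra growth in $|s|$; these precise exponents are what will later make the inverse Laplace transform integrable and hence control the time-domain PML solution and its long-time stability.
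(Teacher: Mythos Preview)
Your proposal follows the same route as the paper: coercivity from Lemma~\ref{lem:a-pml} (your rotation interpretation of part~(a) is exactly the mechanism behind it), then Lax--Milgram for existence and uniqueness, and finally testing (\ref{eq:v-pml}) with $v=\tilde u_L$ together with Cauchy--Schwarz for the stability bounds. The only cosmetic difference is that you deduce (\ref{eq:s-e2}) from (\ref{eq:s-e1}) via the pointwise bound $\|A^{-1}\|_{L^\infty(\Om_\rho^+)}\leq 1+\sigma_\rho/s_1$, whereas the paper's sketch leaves room for invoking Lemma~\ref{lem:a-pml}(b) directly for (\ref{eq:s-e2}); either path is fine.
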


\begin{proof}
The first part of the lemma follows easily from the Lax-Milgram lemma and the strictly coercivity of $\tilde a(\cdot,\,\cdot)$ in Lemma \ref{lem:a-pml}. Further,  the stability estimates (\ref{eq:s-e1}) and (\ref{eq:s-e2}) follow from (\ref{eq:v-pml}), Lemma \ref{lem:a-pml} and the Cauchy-Schwartz inequality.
\end{proof}

The well-posedness of PML problem (\ref{eq:t-pml}) in the time domain can be established by applying Lemma \ref{lem:e-pml}.
\begin{thm}
The truncated PML problem (\ref{eq:t-pml}) in the time domain has a unique solution $(u,p,u^*,p^*)$ such that
\ben
&u\in L^2(0,T;H_0^1(\Omega_\rho^+))  \cap  H^1(0,T;L^2(\Omega_\rho^+)),\quad & u^*\in H^1(0,T;L^2(\Omega_\rho^+)),\\
&p\in L^2(0,T;H(\textnormal{div},\Omega_\rho^+))  \cap
H^1(0,T;L^2(\Omega_\rho^+)),\quad  & p^*\in H^1(0,T;L^2(\Omega_\rho^+)).
\enn
\end{thm}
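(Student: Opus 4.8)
The plan is to reproduce, at the level of the first-order PML system, the Laplace-domain strategy already used for the reduced problem~\eqref{eqs:wave-b}: solve the elliptic problem~\eqref{eqs:s-pml} in the $s$-domain with the explicit stability bounds of Lemma~\ref{lem:e-pml}, invert the Laplace transform, and then recover the auxiliary fields $p,u^{*},p^{*}$ from their defining relations~\eqref{eq:s-m}.

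First I would apply Lemma~\ref{lem:e-pml}: for every $s$ with $s_1=\mathrm{Re}(s)>\zeta_0>0$ the variational problem~\eqref{eq:v-pml} has a unique solution $\tilde u_L(\cdot,s)\in H_0^1(\Omega_\rho^+)$ obeying \eqref{eq:s-e1}--\eqref{eq:s-e2}, which are polynomial in $|s|$ and $1/s_1$. Since $\tilde a(\cdot,\cdot)$ and $sf_L$ depend analytically on $s$, \cite[Lemma~44.1]{Treves} shows that $\tilde u_L$ is holomorphic in $\{s_1>\zeta_0\}$, so by Lemma~\ref{lem:a} the field $u:=\mathscr L^{-1}(\tilde u_L)$ is well defined and supported in $[0,\infty)$. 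Feeding \eqref{eq:s-e2} into the Parseval identity~\eqref{PI} and bounding $\int_{-\infty}^{\infty}(1+|s|)^{a}|s|^{b}\|f_L\|_{L^2(\Omega_\rho^+)}^2\,ds_2$ by time-domain norms of $\partial_t f$ and $\partial_t^2 f$ --- finite because $f\in H^2(0,T;L^2(\Omega_\rho^+))$ --- exactly as in the proof of the earlier theorem, gives $u\in L^2(0,T;H_0^1(\Omega_\rho^+))\cap H^1(0,T;L^2(\Omega_\rho^+))$.

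Next I would reconstruct the remaining unknowns in the $s$-domain by setting $\tilde p_L^{*}:=-s^{-1}\nabla\tilde u_L$, $\tilde u_L^{*}:=s^{-1}\sigma\tilde u_L$ and $\tilde p_L:=A\tilde p_L^{*}$ as in~\eqref{eq:s-m}. Each is holomorphic in $s$ and polynomially bounded: for $\tilde p_L^{*}$ and $\tilde u_L^{*}$ this is immediate from the bounds on $\|\nabla\tilde u_L\|$ and $\|s\tilde u_L\|$, while for $\tilde p_L$ one uses that the entries of $A$, namely $\beta/\alpha=(s+\hat\sigma)/(s+\sigma)$ and $\alpha/\beta=(s+\sigma)/(s+\hat\sigma)$, are bounded on $\{s_1>0\}$ up to the factors $(s_1+\sigma_\rho)/s_1$ already accounted for in~\eqref{eq:s-e1}; moreover the equation~\eqref{eqs:s-pml} yields $\nabla\cdot\tilde p_L=-s^{-1}\nabla\cdot(A\nabla\tilde u_L)=-s\alpha\beta\tilde u_L+f_L$, so $\|\tilde p_L\|_{H(\textnormal{div},\Omega_\rho^+)}$ is controlled too. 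Hence $u^{*},p^{*},p$ are obtained as inverse Laplace transforms supported in $[0,\infty)$, and Parseval gives $u^{*}\in H^1(0,T;L^2(\Omega_\rho^+))$, $p^{*}\in H^1(0,T;L^2(\Omega_\rho^+))$ and $p\in L^2(0,T;H(\textnormal{div},\Omega_\rho^+))\cap H^1(0,T;L^2(\Omega_\rho^+))$.

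Finally I would check that $(u,p,u^{*},p^{*})$ solves~\eqref{eq:t-pml}: this follows by inverting the Laplace transform of~\eqref{eq:s-m} and of~\eqref{eqs:s-pml} written in the first-order form~\eqref{t-pml}, the zero initial data being built into the construction and the homogeneous conditions~\eqref{eq:t-pml-d}--\eqref{eq:t-pml-e} being inherited from $\tilde u_L\in H_0^1(\Omega_\rho^+)$ vanishing on $\partial D\cup\Gamma_0$. Uniqueness is immediate from linearity: a solution with $f=0$ has vanishing Laplace transform for all $s_1>\zeta_0$, whence $\tilde u_L\equiv0$ by Lemma~\ref{lem:e-pml} and then $\tilde p_L=\tilde u_L^{*}=\tilde p_L^{*}\equiv0$. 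The main obstacle I anticipate is the second step: verifying that the elimination of $\tilde p_L,\tilde u_L^{*},\tilde p_L^{*}$ leading to~\eqref{eqs:s-pml} is genuinely reversible, i.e. that the reconstructed fields really satisfy the coupled first-order ODE~\eqref{eq:t-pml-c} with the stated matrices $\Lambda_1,\Lambda_2$, while keeping every reconstructed quantity at polynomial growth in $s$ (the factor $|s+\sigma_\rho|/s_1$ in~\eqref{eq:s-e2} being the borderline case) so that the asserted temporal regularity survives the inverse transform.
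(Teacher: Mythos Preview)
Your proposal is correct and follows essentially the same Laplace-transform strategy as the paper: solve~\eqref{eqs:s-pml} via Lemma~\ref{lem:e-pml}, invoke \cite[Lemma~44.1]{Treves} and Lemma~\ref{lem:a} to invert, and feed the stability bounds \eqref{eq:s-e1}--\eqref{eq:s-e2} into Parseval to obtain the time-domain regularity of $u$ and then of $p$ through $s\tilde p_L=-A\nabla\tilde u_L$, $\nabla\cdot\tilde p_L=-s\alpha\beta\tilde u_L+f_L$. The only (cosmetic) difference is that for $u^{*}$ and $p^{*}$ the paper works directly in the time domain from the ODEs $\partial_t u^{*}=\sigma u$ and $\partial_t p^{*}=-\nabla u$ together with Poincar\'e's inequality, whereas you stay in the $s$-domain throughout; both routes are equally short and yield the same conclusion.
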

\begin{proof}
By simple calculations, we can obtain
\ben
&&\int_0^T \| \pa_t u \|_{L^2(\Om_\rho^+)}^2 + \| \nabla u \|_{L^2(\Om_\rho^+)}^2 \, dt\\
\leq && C \int_0^\infty   e^{-2s_1 t} (\| \pa_t u \|_{L^2(\Om_\rho^+)}^2 + \| \nabla u \|_{L^2(\Om_\rho^+)}^2) \, dt.
\enn
Hence it suffices to estimate the integral
\ben
 \int_0^\infty   e^{-2s_1 t} (\| \pa_t u \|_{L^2\Om_\rho^+)}^2 + \| \nabla u \|_{L^2(\Om_\rho^+)}^2) \, dt.
\enn
Using the stability estimate of $u_L$ in Lemma \ref{lem:e-pml}, we duduce from \cite[Lemma 44.1]{Treves} that $u_L$ is a holomorphic function of $s$ on the half plane $s_1>\zeta_0>0$, where $\zeta_0$ is any positive constant. Thus, by Lemma \ref{lem:a}, the inverse Laplace transform of  $u_L$ exists and is supported in $[0,\infty]$.

Set $u=\mathscr{L}^{-1}(u_L)$. One deduces from the Parseval identity (\ref{PI}),  stability estimate (\ref{eq:s-e2}) and the Cauchy-Schwartz inequality that
\ben
 &&\int_0^\infty   e^{-2s_1 t} (\| \pa_t u \|_{L^2(\Om_\rho^+)}^2 + \| \nabla u \|_{L^2(\Om_\rho^+)}^2) \, dt\\
 =&& \frac{1}{2\pi} \int_{-\infty}^{+\infty}  \| s  u_L \|_{L^2(\Om_\rho^+)}^2 + \| \nabla u_L \|_{L^2(\Om_\rho^+)}^2) \, d s_2\\
 \leq && C \frac{1}{s_1^3}\left( 1+\frac{\sigma_\rho}{s_1}\right)^2 \int_{-\infty}^{+\infty} |s| |s+\sigma_\rho|^2  \| f_L\|_{L^2(\Om_\rho^+)}^2 \, d s_2 \\
 = && C \frac{1}{s_1^3}\left( 1+\frac{\sigma_\rho}{s_1}\right)^2 \int_{-\infty}^{+\infty}   \|s(s+\sigma_\rho)f_L\|_{L^2(\Om_\rho^+)}   \| (s+\sigma_\rho) f_L\|_{L^2(\Om_\rho^+)}\, d s_2 \\
\leq && C \frac{1}{s_1^3}\left( 1+\frac{\sigma_\rho}{s_1}\right)^2 \int_{0}^{+\infty} e^{-2 s_1 t}  \|\pa_{tt}f+\sigma_\rho\pa_t f\|_{L^2(\Om_\rho^+)}   \|  \pa_t f+\sigma_\rho f\|_{L^2(\Om_\rho^+)}\, d t \\
\leq && C \frac{1}{s_1^3}\left( 1+\frac{\sigma_\rho}{s_1}\right)^2 \int_{0}^{+\infty} e^{-2 s_1 t} \Big( \|\pa_{tt}f\|_{L^2(\Om_\rho^+)} \|\pa_t f\|_{L^2(\Om_\rho^+)}  \\
 &&+\sigma_\rho\|\pa_{tt}f\|_{L^2(\Om_\rho^+)}   \|  f\|_{L^2(\Om_\rho^+)}  +\sigma_\rho\|\pa_t f\|_{L^2(\Om_\rho^+)}^2
 +\sigma_\rho^2\|\pa_t f\|_{L^2(\Om_\rho^+)}   \|  f\|_{L^2(\Om_\rho^+)}  \Big) \, d t \\
 \leq && C \frac{1}{s_1^3}\left( 1+\frac{\sigma_\rho}{s_1}\right)^2 \int_{0}^{+\infty} e^{-2 s_1 t} \Big( (1+\sigma_\rho)\|\pa_{tt}f\|_{L^2(\Om_\rho^+)}^2 +\sigma_\rho(1+\sigma_\rho+\sigma_\rho^2)\|\pa_t f\|_{L^2(\Om_\rho^+)}^2 \\
 &&+ (1+\sigma_\rho)\|  f\|_{L^2(\Om_\rho^+)}^2  \Big)\, dt.
\enn
This together with the Poincar\'{e} inequality proves
\ben
u\in L^2(0,T;H_0^1(\Om_\rho^+)) \cap H^1(0,T; L^2(\Om_\rho^+)).
\enn
From (\ref{eq:s-m}) and the first equation of (\ref{eqs:s-pml}), we  obtain
\be \label{eq:t-tr}
s p_L=-A \nabla u_L, \quad \nabla \cdot p_L=-s \alpha\beta u_L+f_L.
\en
By the first equation of (\ref{eq:t-tr}) and stability estimate (\ref{eq:s-e1}), we deduce from \cite[Lemma 44.1]{Treves} that $p_L$ is holomorphic function of $s$ on the half plane $s_1>\zeta_0>0$, where $\zeta_0$ is any positive constant. Thus, by Lemma \ref{lem:a}, it follows from that  the inverse Laplace transform of  $p_L$ exists and is supported in $[0,\infty]$.
Then, using the Parseval identity (\ref{PI}), Cauchy inequality with $\vep$  and  stability estimate (\ref{eq:s-e1}), we can obtain
\ben
 &&\int_0^\infty   e^{-2s_1 t} \left(\| \pa_t p \|_{L^2(\Om_\rho^+)}^2 + \| \nabla \cdot p \|_{L^2(\Om_\rho^+)}^2 \right) \, dt\\
 = && \frac{1}{2\pi} \int_{-\infty}^{+\infty}  \| s  p_L \|_{L^2(\Om_\rho^+)}^2 + \| \nabla \cdot  p_L \|_{L^2(\Om_\rho^+)}^2 \, d s_2\\
  =&& \frac{1}{2\pi} \int_{-\infty}^{+\infty}  \| A \nabla u_L \|_{L^2(\Om_\rho^+)}^2 + \| s \alpha \beta   u_L +f_L\|_{L^2(\Om_\rho^+)}^2 \, d s_2\\
 \leq && C \frac{1}{2\pi} \int_{-\infty}^{+\infty}  \| A \nabla u_L \|_{L^2(\Om_\rho^+)}^2 + \| s \alpha \beta   u_L\|_{L^2(\Om_\rho^+)}^2+\| f_L\|_{L^2(\Om_\rho^+)}^2 \, d s_2\\
 \leq &&  C \frac{1}{2\pi} \int_{-\infty}^{+\infty}  \frac{|s|}{s_1} \left(1+\frac{\sigma_\rho}{s_1} \right)^2\| f_L \|_{L^2(\Om_\rho^+)}^2+ \| f_L \|_{L^2(\Om_\rho^+)}^2 \,d s_2\\
  \leq &&  C  \int_{0}^{+\infty} e^{-2s_1 t} \left( \frac{1}{s_1} \left(1+\frac{\sigma_\rho}{s_1} \right)^2 \|\pa_t f \|_{L^2(\Om_\rho^+)}\| f \|_{L^2(\Om_\rho^+)}+ \| f \|_{L^2(\Om_\rho^+)}^2 \right) \,d t\\
   \leq &&  C  \int_{0}^{+\infty} e^{-2s_1 t} \left( \frac{1}{s_1} \left(1+\frac{\sigma_\rho}{s_1} \right)^2 \|\pa_t f \|_{L^2(\Om_\rho^+)}^2+ \Big( \frac{1}{s_1} \left(1+\frac{\sigma_\rho}{s_1} \right)^2+1\Big)\| f \|_{L^2(\Om_\rho^+)}^2 \right) \,d t.
\enn
Hence,
\ben
p\;\in\; L^2(0,T;H(\textnormal{div},\Omega_\rho^+))  \cap
H^1(0,T;L^2(\Omega_\rho^+)).
\enn
By the second equation of  (\ref{eq:t-pml-b}) and Poinc$\acute{a}$re's inequality, we see
\ben
 &&\int_0^\infty   e^{-2s_1 t} \| \pa_t u^* \|_{L^2(\Om_\rho^+)}^2  \, dt\\
\leq &&\int_0^\infty   e^{-2s_1 t} \sigma_\rho^2 \| u\|_{L^2(\Om_\rho^+)}^2  \, dt\\
 \leq && C \int_0^\infty   e^{-2s_1 t} \sigma_\rho^2 \left(\| \nabla u\|_{L^2(\Om_\rho^+)}^2  \right) \, dt\\
 \leq && C \int_0^\infty   e^{-2s_1 t} \sigma_\rho^2 \left(\| \nabla u\|_{L^2(\Om_\rho^+)}^2 + \| \pa_t u\|_{L^2(\Om_\rho^+)}^2  \right) \, dt.
\enn
Then, in view of the  solution space for $u$, we know $u^*\;\in\; H^1(0,T;L^2(\Omega_\rho^+))$.
Similarly, from the first equation of (\ref{eq:t-pml-b}), we know $\| \pa_t p^*\|_{L^2(\Om_\rho^+)}^2=\| \nabla u\|_{L^2(\Om_\rho^+)}^2$, which implies $p^*\;\in\;H^1(0,T;L^2(\Omega_\rho^+))$.
\end{proof}

\subsection{Stability of the truncated PML problem }
The aim of this subsection is to prove the stability of the PML problem (\ref{eq:t-pml}) with $\sigma=\hat \sigma$. We first present an auxiliary stability estimate.
\begin{thm} \label{thm:sg}
Let $(u,p,u^*,p^*)$ be the solution of the truncated PML problem (\ref{eq:t-pml}). Then there holds the stability estimate
\ben
&&\max_{0\leq t\leq T} \left( \|\pa_t u +\sigma u \|_{L^2(\Omega_\rho^+)} +\|\pa_t p +\sigma p \|_{L^2(\Omega_\rho^+)} +\|\pa_t u^* +\sigma u^* \|_{L^2(\Omega_\rho^+)} \right)\\
\leq && C \int_0^T \| \pa_t f +\sigma f\|_{L^2(\Omega_\rho^+)}\, dt,
\enn
where the constant $C$ is independent of $\sigma$ and $T$.
\end{thm}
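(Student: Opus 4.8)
The plan is to prove the estimate by a time–domain energy argument on the system obtained from \eqref{eq:t-pml} after acting with the operator $L:=\pa_t+\sigma$. \textbf{Step 1.} First I would observe that, since the profile satisfies $\sigma=\hat\sigma$ and $M$ is orthogonal, $\Lambda_1=\Lambda_2=\sigma I$, so \eqref{eq:t-pml-c} reduces to $\pa_t(p-p^*)+\sigma(p-p^*)=0$; with the zero initial data this forces $p\equiv p^*$, and it suffices to treat
\[
\pa_t u+2\sigma u+\sigma u^*+\nabla\cdot p=f,\qquad \pa_t p=-\nabla u,\qquad \pa_t u^*=\sigma u \quad\text{in }\Om_\rho^+\times(0,T),
\]
with $u=0$ on $\pa D\cup\Gamma_0\cup\Gamma_\rho^+$ and vanishing data. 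Next I would set $\hat u:=\pa_tu+\sigma u$, $\hat p:=\pa_tp+\sigma p$, $\hat u^*:=\pa_tu^*+\sigma u^*$, $\hat f:=\pa_tf+\sigma f$; since $\sigma=\sigma(r)$ is $t$–independent, $L$ commutes with $\pa_t$, while $L(\nabla\cdot p)=\nabla\cdot\hat p-(\nabla\sigma)\cdot p$ and $L\nabla u=\nabla\hat u-(\nabla\sigma)u$, so applying $L$ to the three equations yields the closed system
\[
\pa_t\hat u+2\sigma\hat u+\sigma\hat u^*+\nabla\cdot\hat p=\hat f+(\nabla\sigma)\cdot p,\qquad
\pa_t\hat p=-\nabla\hat u+(\nabla\sigma)u,\qquad
\pa_t\hat u^*=\sigma\hat u,
\]
with $\hat u=0$ on all of $\pa\Om_\rho^+$ (because $u=0$ there) and zero initial data.

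\textbf{Step 2.} Then I would multiply these equations by $\overline{\hat u}$, $\overline{\hat p}$, $\overline{\hat u^*}$, integrate over $\Om_\rho^+$, add and take real parts. Integrating $\nabla\cdot\hat p$ by parts gives a vanishing boundary term (as $\hat u|_{\pa\Om_\rho^+}=0$) and a volume term $-\textnormal{Re}\int\nabla\hat u\cdot\overline{\hat p}$ that cancels the one from the $\hat p$–equation, while $\textnormal{Re}\int\sigma\hat u^*\overline{\hat u}=\tfrac12\tfrac{d}{dt}\|\hat u^*\|_{L^2(\Om_\rho^+)}^2$ by the $\hat u^*$–equation, which leaves
\[
\frac12\frac{d}{dt}\Big(\|\hat u\|^2+\|\hat p\|^2+\|\hat u^*\|^2\Big)+2\!\int_{\Om_\rho^+}\!\sigma|\hat u|^2
=\textnormal{Re}\!\int_{\Om_\rho^+}\!\hat f\,\overline{\hat u}+\textnormal{Re}\!\int_{\Om_\rho^+}\!(\nabla\sigma)\cdot\big(p\,\overline{\hat u}+u\,\overline{\hat p}\big),
\]
all norms in $L^2(\Om_\rho^+)$. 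The dissipation term is nonnegative and is discarded; the forcing term is handled in the usual way---$|\textnormal{Re}\int\hat f\,\overline{\hat u}|\le\|\hat f\|\big(\|\hat u\|^2+\|\hat p\|^2+\|\hat u^*\|^2\big)^{1/2}$, then integrate in time and divide by the square root of the energy---so that it contributes exactly $\int_0^T\|\hat f\|_{L^2(\Om_\rho^+)}\,dt$ to the final bound.

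\textbf{Step 3.} The main obstacle is the lower–order term $\textnormal{Re}\int(\nabla\sigma)\cdot(p\,\overline{\hat u}+u\,\overline{\hat p})$, which is genuinely present since $\sigma$ depends on $x$ and $L$ does not commute with $\nabla$. I would first use $\textnormal{Re}\big[(\nabla\sigma)\cdot(p\,\overline{\hat u}+u\,\overline{\hat p})\big]=\pa_t\,\textnormal{Re}\big[(\nabla\sigma)\cdot(p\,\overline u)\big]+2\,\textnormal{Re}\big[\sigma(\nabla\sigma)\cdot(p\,\overline u)\big]$ to move the first piece into a modified energy $E(t):=\|\hat u\|^2+\|\hat p\|^2+\|\hat u^*\|^2-2\,\textnormal{Re}\int(\nabla\sigma)\cdot(p\,\overline u)$, and then estimate the leftover pieces $\textnormal{Re}\int(\nabla\sigma)\cdot(p(t)\,\overline{u(t)})$ and $\int_0^t\textnormal{Re}\int\sigma(\nabla\sigma)\cdot(p\,\overline u)$ with the help of the plain energy bound for the un–hatted system (test it with $\overline u$, $\overline p$, $\overline{u^*}$), giving $\|u(t)\|+\|p(t)\|+\|u^*(t)\|\le C\int_0^t\|f\|\,d\tau$, together with $\|f(t)\|\le\int_0^t\|\hat f\|\,d\tau\le\int_0^T\|\hat f\|\,d\tau$, which follows from $f(t)=\int_0^t e^{-\sigma(t-\tau)}\hat f(\tau)\,d\tau$ and $0<e^{-\sigma(t-\tau)}\le1$. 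Using $\sigma(R)=0$, $u|_{r=\rho}=0$ and the monotonicity $\sigma'\ge0$ on $[R,\rho]$, an integration by parts in $r$ should let me control these contributions by quantities already in $E$ rather than by $\|\nabla\sigma\|_{L^\infty}$, after which a Grönwall inequality closes the loop; finally, since $E(t)$ is comparable to $\|\hat u\|^2+\|\hat p\|^2+\|\hat u^*\|^2$, taking $\max_{0\le t\le T}$ and absorbing the resulting $\varepsilon$–multiple of that maximum gives the claim. The hard part will be doing this last bookkeeping so that neither $\|\nabla\sigma\|_{L^\infty}$ nor $T$ enters the final constant.
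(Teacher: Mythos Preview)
Your Steps~1--2 are precisely the paper's argument: apply $\pa_t+\sigma$ to \eqref{eq:t-pml-a} and test with $\pa_tu+\sigma u$; derive $\pa_t^2p+\Lambda_1\pa_tp+\Lambda_2\nabla u+\nabla\pa_tu=0$ from \eqref{eq:t-pml-b}--\eqref{eq:t-pml-c} and test with $\pa_tp+\sigma p$; use $\pa_tu^*=\sigma u$ to turn the $\sigma\hat u^*$ contribution into $\tfrac12\tfrac{d}{dt}\|\pa_tu^*+\sigma u^*\|^2$; integrate by parts once so the gradient cross terms cancel; drop the nonnegative dissipation $\int_{\Om_\rho^+}(\sigma+\hat\sigma)|\pa_tu+\sigma u|^2$; and finish with Cauchy--Schwarz on the forcing term. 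Structurally this is exactly what the paper does.

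The divergence is your Step~3. The paper carries \emph{no} commutator terms: it writes $\sigma\nabla\cdot p+\pa_t\nabla\cdot p=\nabla\cdot(\pa_tp+\sigma p)$ and $\Lambda_2\nabla u+\nabla\pa_tu=\nabla(\pa_tu+\sigma u)$ and proceeds, i.e.\ it treats $\sigma$ as spatially constant. That is the intended reading of the hypothesis $\sigma=\hat\sigma$ in this subsection (and is how the numerical experiments are set up, with $\sigma=\hat\sigma$ a single constant); indeed, keeping the defining relation $\hat\sigma(r)=r^{-1}\!\int_0^r\sigma$, the identity $\sigma=\hat\sigma$ forces $\sigma'\equiv0$ on the layer. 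Under that reading your commutators vanish identically and your Steps~1--2 already give the theorem, with $C$ manifestly independent of $\sigma$ and $T$. Conversely, for a genuinely variable $\sigma$ your Step~3 plan cannot close as stated: a Gr\"onwall step, or the control of $\int_0^t\!\int\sigma(\nabla\sigma)\cdot(p\,\overline u)$ through the un-hatted energy, inevitably brings in factors of $T$ or $\|\sigma'\|_{L^\infty}$, contradicting the claimed independence of $C$; and your modified energy $E(t)$ is not equivalent to $\|\hat u\|^2+\|\hat p\|^2+\|\hat u^*\|^2$ with a $\sigma$-free constant. So the ``hard part'' you anticipate is not meant to be done---with $\sigma=\hat\sigma$ it simply is not there.
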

\begin{proof}
We apply to  equation  (\ref{eq:t-pml-a}) the operator $\pa_t +\sigma$ to get
\ben
\pa_{t}^2 u+\nabla \cdot \left( \pa_t p +\sigma p \right) + (\sigma+\hat \sigma)\left( \pa_t u +\sigma u \right) +\hat \sigma \left( \pa_t u^* +\sigma u^* \right)= \pa_t f +\sigma f.
\enn
Multiplying the above equation by $\pa_t u +\sigma u$ and integrating over $\Om_\rho^+$ yield
\be
\begin{aligned} \label{eq:st-1}
&\frac{1}{2}\frac{d}{dt}\| \pa_t u+\sigma u\|_{L^2(\Omega_\rho^+)}^2+ \frac{1}{2}\frac{d}{dt}\| \pa_t u^*+\sigma u^*\|_{L^2(\Omega_\rho^+)}^2+ \left( \nabla\cdot (\pa_t p+\sigma p),\,  \pa_t u+\sigma u\right)_{\Omega_\rho^+} \\
&+\left((\sigma+\hat \sigma) (\pa_t u+\sigma u),\,(\pa_t u+\sigma u) \right )_{\Omega_\rho^+}= \left(\pa_t f+\sigma f,\, \pa_t u+\sigma u\right)_{\Omega_\rho^+}.
\end{aligned}
\en
Since
\ben
\int_0^t \left((\sigma+\hat \sigma) (\pa_\tau u+\sigma u),\,(\pa_\tau u+\sigma u) \right )_{\Omega_\rho^+}\, d\tau\geq0,
\enn
integrating (\ref{eq:st-1}) from $0$ to $t$ and applying Green's first identity, we obtain
\be \label{eq:t-su}
\begin{aligned}
&\frac{1}{2}\| \pa_t u+\sigma u\|_{L^2(\Omega_\rho^+)}^2+ \frac{1}{2}\| \pa_t u^*+\sigma u^*\|_{L^2(\Omega_\rho^+)}^2- \int_0^t \left( (\pa_\tau p+\sigma p),\, \nabla ( \pa_\tau u+\sigma u)\right)_{\Omega_\rho^+}\,d\tau\\
\leq & \frac{1}{2}\| \pa_t u|_{t=0}\|_{L^2(\Omega_\rho^+)}^2+\frac{1}{2}\| \pa_t u^*|_{t=0}\|_{L^2(\Omega_\rho^+)}^2 +\int_0^t\left(\pa_\tau f+\sigma f,\, \pa_\tau u+\sigma u\right)_{\Omega_\rho^+}\,d\tau .
\end{aligned}
\en
Here we have used the fact that $u|_{t=0}=u^*|_{t=0}=0$.
We then apply $\pa_t $ to  the first equation of (\ref{eq:t-pml-b}) and (\ref{eq:t-pml-c}) and eliminate the term with $p^*$. This gives
\ben
\pa_{t}^2 p+\Lambda_1 \pa_t p +\Lambda_2 \nabla u+\nabla \pa_t u=0.
\enn
Multiplying the above equation by $\pa_t p +\sigma p$ and integrating over $\Om_\rho^+$ yield
\be \label{eq:t-p}
\begin{aligned}
&\frac{1}{2}\frac{d}{dt}\| \pa_t p+\sigma p\|_{L^2(\Omega_\rho^+)}^2+  \left(  \nabla ( \pa_t u+\sigma u),\, (\pa_t p+\sigma p)\right)_{\Omega_\rho^+} \\
&+\left((\Lambda_1-\sigma I) \pa_t p,\,(\pa_t p+\sigma p) \right )_{\Omega_\rho^+}=0.
\end{aligned}
\en
 Since $\sigma=\hat\sigma$, we have $\Lambda_1=\Lambda_2=\sigma I$ and $\Lambda_1-\sigma I=0$. Thus it follows from (\ref{eq:t-p}) that
\be \label{eq:t-sp}
    \frac{1}{2}\| \pa_t p+\sigma p\|_{L^2(\Omega_\rho^+)}^2+ \int_0^t \left(  \nabla ( \pa_\tau u+\sigma u),\, (\pa_\tau p+\sigma p)\right)_{\Omega_\rho^+} \,d\tau
    =\| \pa_t p|_{t=0}\|_{L^2(\Omega_\rho^+)}^2.
\en
Adding (\ref{eq:t-su}) and (\ref{eq:t-sp}) we get
\be
&&\frac{1}{2}\| \pa_t u+\sigma u\|_{L^2(\Omega_\rho^+)}^2+ \frac{1}{2}\| \pa_t u^*+\sigma u^*\|_{L^2(\Omega_\rho^+)}^2+ \frac{1}{2}\| \pa_t p+\sigma p\|_{L^2(\Omega_\rho^+)}^2 \nonumber\\
&\leq & \frac{1}{2}\| \pa_t u|_{t=0}\|_{L^2(\Omega_\rho^+)}^2
+\frac{1}{2}\| \pa_t u^*|_{t=0}\|_{L^2(\Omega_\rho^+)}^2 \nonumber\\
&&+\| \pa_t p|_{t=0}\|_{L^2(\Omega_\rho^+)}^2
+\int_0^t\left(\pa_\tau f+\sigma f,\, \pa_\tau u+\sigma u\right)_{\Omega_\rho^+}\,d\tau. \nonumber
\en
It follows from the compatibility conditions in (\ref{eq:t-pml-a})-(\ref{eq:t-pml-b}) and the initial conditions (\ref{eq:t-pml-f}) that
\begin{equation}
\pa_t u|_{t=0}=f|_{t=0}=0,\quad \pa_t u^*|_{t=0}=0, \quad \pa_t p|_{t=0}=0.
\end{equation}
Applying the Cauchy inequality with $\vep$, we have
\ben
&&\max_{0\leq t\leq  T} \left( \|\pa_t u +\sigma u \|_{L^2(\Omega_\rho^+)} +\|\pa_t p +\sigma p \|_{L^2(\Omega_\rho^+)} +\|\pa_t u^* +\sigma u^* \|_{L^2(\Omega_\rho^+)} \right)\\
\leq && C \int_0^T \| \pa_t f +\sigma f\|_{L^2(\Omega_\rho^+)}\, dt.
\enn
\end{proof}
The following lemma will be used to prove the stability of truncated PML problem (\ref{eq:t-pml}) which can be directly obtained from \cite[Lemma 3.2]{Chen12}.
\begin{lem} \label{lem:sg}
It holds that
\ben
\max_{0\leq t\leq T} \|  \sigma u \|_{L^2(\Omega_\rho^+)}\leq \max_{0\leq t\leq  T} \|  \pa_t u+\sigma u \|_{L^2(\Omega_\rho^+)}.
\enn
\end{lem}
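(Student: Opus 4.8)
The plan is to reduce the statement, pointwise in $x$, to the scalar linear ODE that defines $g:=\partial_t u+\sigma u$, solve it, and then integrate over $\Omega_\rho^+$. Since $\sigma=\sigma(|x|)$ is independent of $t$ and $u|_{t=0}=0$, for each fixed $x\in\Omega_\rho^+$ the identity $\partial_t u(x,\cdot)+\sigma(x)u(x,\cdot)=g(x,\cdot)$ is a first-order linear ODE in $t$ with zero initial data, so
\[
\sigma(x)\,u(x,t)=\int_0^t \sigma(x)\,e^{-\sigma(x)(t-\tau)}\,g(x,\tau)\,d\tau ,\qquad 0\le t\le T .
\]
The one structural fact I need is that the non-negative kernel $k_x(t,\tau):=\sigma(x)\,e^{-\sigma(x)(t-\tau)}$ has total mass $\int_0^t k_x(t,\tau)\,d\tau=1-e^{-\sigma(x)t}\le 1$; this is exactly where $\sigma\ge 0$ (guaranteed by the standing assumptions on the absorbing profile) is used. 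The Cauchy--Schwarz inequality in $\tau$ against the measure $k_x(t,\tau)\,d\tau$ then gives the pointwise estimate
\[
|\sigma(x)\,u(x,t)|^{2}\le\Big(\int_0^t k_x(t,\tau)\,d\tau\Big)\int_0^t k_x(t,\tau)\,|g(x,\tau)|^{2}\,d\tau\le\int_0^t k_x(t,\tau)\,|g(x,\tau)|^{2}\,d\tau .
\]

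Next I would integrate this inequality over $\Omega_\rho^+$, interchange the $x$- and $\tau$-integrals by Fubini, and invoke the mass bound a second time to conclude $\max_{0\le t\le T}\|\sigma u(\cdot,t)\|_{L^2(\Omega_\rho^+)}\le\max_{0\le t\le T}\|\partial_t u+\sigma u\|_{L^2(\Omega_\rho^+)}$; equivalently, one may push the $L^2(\Omega_\rho^+)$-norm through the time convolution with Minkowski's integral inequality, using that $\sigma$ is a fixed radial multiplier and that $k_x(t,\cdot)$ has $L^1$-norm in $\tau$ at most one. This last bookkeeping is exactly the computation in \cite[Lemma 3.2]{Chen12}, which I would cite directly, as the paper does. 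The step I would treat most carefully is precisely this one: interchanging $\sup_t$ with $\int_{\Omega_\rho^+}$ and still landing on the constant $1$ (rather than a $T$- or $\sigma_\rho$-dependent constant) is delicate, since the two operations do not commute and the inequality must be routed through the kernel-mass bound.

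As a self-contained alternative staying inside the Laplace-transform framework already used in the paper, I would take the Laplace transform of $\partial_t u+\sigma u=g$ and use $u|_{t=0}=0$ to get $\sigma\widehat u=\dfrac{\sigma}{s+\sigma}\,\widehat g$; since $|s+\sigma|^{2}=(s_1+\sigma)^{2}+s_2^{2}\ge\sigma^{2}$ whenever $\textnormal{Re}(s)=s_1>0$, this yields $\|\sigma\widehat u(\cdot,s)\|_{L^2(\Omega_\rho^+)}\le\|\widehat g(\cdot,s)\|_{L^2(\Omega_\rho^+)}$ for all such $s$. Combined with the Parseval identity (\ref{PI}) and the limit $s_1\to 0^{+}$, this gives the corresponding bound in the $L^2(0,T;L^2(\Omega_\rho^+))$-norm, which suffices for the stability argument that follows.
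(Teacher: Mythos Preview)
The paper does not prove this; it cites \cite[Lemma~3.2]{Chen12}, whose argument is exactly your ODE-and-kernel computation, so your proposal coincides with the intended proof.

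One clarification on the ``delicate'' step you flag: it is routine only because $\sigma$ is constant in $x$ in this setting (the standing hypothesis $\sigma=\hat\sigma$ of this subsection forces this, and it is also the setting of \cite{Chen12}). Then $k_x(t,\tau)=\sigma\,e^{-\sigma(t-\tau)}$ does not depend on $x$ and factors out of the spatial integral, after which the mass bound applies directly:
\[
\|\sigma u(\cdot,t)\|_{L^2}^2\le\int_0^t\sigma e^{-\sigma(t-\tau)}\|g(\cdot,\tau)\|_{L^2}^2\,d\tau
\le\Big(\max_{0\le\tau\le T}\|g(\cdot,\tau)\|_{L^2}^2\Big)\int_0^t\sigma e^{-\sigma(t-\tau)}\,d\tau
\le\max_{0\le\tau\le T}\|g(\cdot,\tau)\|_{L^2}^2.
\]
For a genuinely $x$-dependent $\sigma$ the sharp constant $1$ can actually fail (take $\sigma$ equal to two distinct positive values on disjoint sets and let $g(\cdot,\tau)$ be supported on different sets at different times), so your caution is well placed; but no extra work is needed here. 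Finally, your Laplace-transform alternative yields only an $L^2(0,T;L^2(\Omega_\rho^+))$ bound, not the $L^\infty(0,T;L^2(\Omega_\rho^+))$ bound stated in the lemma and used in Theorem~\ref{thm:gg}, so it is weaker than what is required.
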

The main result of this subsection is stated as follows.
\begin{thm} \label{thm:gg}
The solution $(u,p,u^*,p^*)$ to the truncated PML problem (\ref{eq:t-pml}) satisfies the stability estimate
\ben
&&\max_{0\leq t\leq  T} \left( \|\pa_t u  \|_{L^2(\Omega_\rho^+)} +\|\pa_t p  \|_{L^2(\Omega_\rho^+)} +\|\pa_t u^*  \|_{L^2(\Omega_\rho^+)}++\|\pa_t p^*  \|_{L^2(\Omega_\rho^+)} \right)\\
& \leq &   C \int_0^T \| \pa_t f +\sigma f\|_{L^2(\Omega_\rho^+)}\, dt.
\enn
\end{thm}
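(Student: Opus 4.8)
The plan is to deduce Theorem~\ref{thm:gg} from the weighted estimate of Theorem~\ref{thm:sg} by stripping off the zeroth‑order weights $\sigma$, using Lemma~\ref{lem:sg} together with the algebraic structure of the first‑order PML system \eqref{eq:t-pml}. Recall that Theorem~\ref{thm:sg} already controls
\[
\max_{0\le t\le T}\big(\|\partial_t u+\sigma u\|_{L^2(\Omega_\rho^+)}+\|\partial_t p+\sigma p\|_{L^2(\Omega_\rho^+)}+\|\partial_t u^*+\sigma u^*\|_{L^2(\Omega_\rho^+)}\big)\le C\int_0^T\|\partial_t f+\sigma f\|_{L^2(\Omega_\rho^+)}\,dt,
\]
and that Lemma~\ref{lem:sg}, whose proof uses nothing beyond the vanishing initial datum, extends verbatim (componentwise) to each of the fields $v\in\{u,p,u^*,p^*\}$, all of which vanish at $t=0$ by \eqref{eq:t-pml-f}: thus $\max_{0\le t\le T}\|\sigma v\|_{L^2(\Omega_\rho^+)}\le\max_{0\le t\le T}\|\partial_t v+\sigma v\|_{L^2(\Omega_\rho^+)}$. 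Throughout, $C$ denotes a generic constant independent of $\sigma$ and $T$.

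For $u$ and $p$ this is immediate: by the triangle inequality and Lemma~\ref{lem:sg},
\[
\max_{0\le t\le T}\|\partial_t u\|_{L^2(\Omega_\rho^+)}\le\max_{0\le t\le T}\|\partial_t u+\sigma u\|_{L^2(\Omega_\rho^+)}+\max_{0\le t\le T}\|\sigma u\|_{L^2(\Omega_\rho^+)}\le 2\max_{0\le t\le T}\|\partial_t u+\sigma u\|_{L^2(\Omega_\rho^+)},
\]
and likewise $\max_{0\le t\le T}\|\partial_t p\|_{L^2(\Omega_\rho^+)}\le 2\max_{0\le t\le T}\|\partial_t p+\sigma p\|_{L^2(\Omega_\rho^+)}$; both right‑hand sides are bounded by Theorem~\ref{thm:sg}. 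For $u^*$ I would bypass the triangle inequality: the second identity in \eqref{eq:t-pml-b} reads $\partial_t u^*=\sigma u$, so $\|\partial_t u^*\|_{L^2(\Omega_\rho^+)}=\|\sigma u\|_{L^2(\Omega_\rho^+)}$, and Lemma~\ref{lem:sg} (applied to $u$) bounds the maximum of this by $\max_{0\le t\le T}\|\partial_t u+\sigma u\|_{L^2(\Omega_\rho^+)}$.

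The term $\|\partial_t p^*\|$ requires the only genuine argument. Since $\sigma=\hat\sigma$, the orthogonality of $M$ gives $\Lambda_1=\Lambda_2=\sigma I$, so \eqref{eq:t-pml-c} becomes $\partial_t p+\sigma p=\partial_t p^*+\sigma p^*$, while the first identity in \eqref{eq:t-pml-b} gives $\partial_t p^*=-\nabla u$. Eliminating $\partial_t p^*$ yields $\nabla u=\sigma p^*-(\partial_t p+\sigma p)$, whence
\[
\max_{0\le t\le T}\|\partial_t p^*\|_{L^2(\Omega_\rho^+)}=\max_{0\le t\le T}\|\nabla u\|_{L^2(\Omega_\rho^+)}\le\max_{0\le t\le T}\|\sigma p^*\|_{L^2(\Omega_\rho^+)}+\max_{0\le t\le T}\|\partial_t p+\sigma p\|_{L^2(\Omega_\rho^+)}.
\]
Applying Lemma~\ref{lem:sg} to $p^*$ and then using $\partial_t p^*+\sigma p^*=\partial_t p+\sigma p$ once more bounds $\max_{0\le t\le T}\|\sigma p^*\|_{L^2(\Omega_\rho^+)}$ by $\max_{0\le t\le T}\|\partial_t p+\sigma p\|_{L^2(\Omega_\rho^+)}$, so $\max_{0\le t\le T}\|\partial_t p^*\|_{L^2(\Omega_\rho^+)}\le 2\max_{0\le t\le T}\|\partial_t p+\sigma p\|_{L^2(\Omega_\rho^+)}$, which is again controlled by Theorem~\ref{thm:sg}. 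Summing the four bounds gives the assertion with a constant independent of $\sigma$ and $T$.

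I expect the $p^*$ contribution to be the only real obstacle, since Theorem~\ref{thm:sg} provides no weighted estimate for $p^*$ itself; the resolution is the observation that when $\sigma=\hat\sigma$ the matrices $\Lambda_1,\Lambda_2$ collapse to $\sigma I$, forcing $\partial_t p^*+\sigma p^*=\partial_t p+\sigma p$ and letting one recycle the bound already proved for $p$. A minor point is that Lemma~\ref{lem:sg} is used for the vector field $p$ (and $p^*$) rather than just for the scalar $u$, but this is harmless because its proof is componentwise and relies only on the zero initial condition.
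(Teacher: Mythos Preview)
Your proof is correct and follows essentially the same route as the paper: strip the $\sigma$-weights from Theorem~\ref{thm:sg} via Lemma~\ref{lem:sg} and the algebraic identities of the system, using in particular that $\sigma=\hat\sigma$ forces $\Lambda_1=\Lambda_2=\sigma I$ and hence $\partial_t p^*+\sigma p^*=\partial_t p+\sigma p$. The only difference is a small detour in your $p^*$ step: the paper applies the triangle inequality and Lemma~\ref{lem:sg} directly to $p^*$ to get $\max\|\partial_t p^*\|\le 2\max\|\partial_t p^*+\sigma p^*\|=2\max\|\partial_t p+\sigma p\|$, whereas you first pass through $\partial_t p^*=-\nabla u$ and then recombine; both arrive at the same bound with the same ingredients.
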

\begin{proof}
It follows from Lemma \ref{lem:sg} that
\ben
\max_{0\leq t\leq T}\| \pa_t u \|_{L^2(\Omega_\rho^+)} &\leq& \max_{0\leq t\leq T}\| \pa_t u +\sigma u \|_{L^2(\Omega_\rho^+)}+ \max_{0\leq t\leq T}\| \sigma u \|_{L^2(\Omega_\rho^+)} \\
&\leq&  2\max_{0\leq t\leq T}\| \pa_t u +\sigma u \|_{L^2(\Omega_\rho^+)}.
\enn
Similarly, we obtain
\ben
\max_{0\leq t\leq T}\| \pa_t p \|_{L^2(\Omega_\rho^+)} &\leq& 2\max_{0\leq t\leq T}\| \pa_t p +\sigma p \|_{L^2(\Omega_\rho^+)}.
\enn
Using (\ref{eq:t-pml-b}) and Lemma \ref{lem:sg},
\ben
\max_{0\leq t\leq T}\| \pa_t u^* \|_{L^2(\Omega_\rho^+)} =\max_{0\leq t\leq T}\| \sigma u \|_{L^2(\Omega_\rho^+)} \leq \max_{0\leq t\leq T}\| \pa_t u +\sigma u \|_{L^2(\Omega_\rho^+)}.
\enn
By (\ref{eq:t-pml-c}) and Lemma \ref{lem:sg}, one deduces
\ben
\max_{0\leq t\leq T}\| \pa_t p^* \|_{L^2(\Omega_\rho^+)} \;\; &\leq& 2\max_{0\leq t\leq T}\| \pa_t p^* +\sigma p^* \|_{L^2(\Omega_\rho^+)}\\
&=&2\max_{0\leq t\leq T}\| \pa_t p +\sigma p \|_{L^2(\Omega_\rho^+)}.\\
\enn
The desired estimate of Theorem \ref{thm:gg} follows from Theorem \ref{thm:sg} and the above estimates.
\end{proof}
\section {Convergence of PML method}
In this section, we shall prove convergence of the PML method.
First, we discuss the stability of an auxiliary problem for $(\tilde u, \tilde p, \tilde u^*, \tilde p^*) $ over  the PML layer $\Om_{PML}^+$. Consider
\be \label{eq:t-pml-l}
\left\{\begin{array}{lll}
\pa_t \tilde u+(\sigma +\hat \sigma)\tilde u +\sigma \tilde u^{*}+ \nabla \cdot \tilde p =0  \quad &&\mbox{in}\quad \Om_{PML}^+\times (0,T),\\
\pa_t \tilde{p}^{*}=-\nabla\tilde u \quad \pa_t \tilde{u}^{*}=-\sigma \tilde u \quad &&\mbox{in}\quad \Om_{PML}^+\times (0,T),\\
\pa_t \tilde{p}+\Lambda_1 \tilde p= \pa_t \tilde{p}^{*}+\Lambda_2 \tilde p^{*} \quad &&\mbox{in}\quad \Om_{PML}^+\times (0,T),\\
\tilde u =0 \quad &&\mbox{on}\quad (\Gamma_R^+ \cup \Gamma_0) \times (0,T),\\
\tilde u =\xi \quad &&\mbox{on}\quad  \Gamma_\rho^+ \times (0,T)\\
\tilde u|_{t=0} =\tilde p|_{t=0} =\tilde u^{*}|_{t=0}=\tilde p^{*}|_{t=0} \quad &&\mbox{in}\quad  \Om_{PML}^+.
\end{array}\right.
\en
Below we prove a trace lemma which will be used in proving the stability of the above auxiliary problem (\ref{eq:t-pml-l}).
\begin{lem} \label{lem:pml-stability}
Let $\xi \in  H^2(0,T;H^{1/2}_0(\Gamma_\rho^+))$. Then there exists a function $\zeta\in H^2(0,T;\\H^1(\Om_{PML}^+))$ such that $\zeta=0$ on $\Gamma_R^+\times(0,T)$, $\zeta=\xi$ on $\Gamma_\rho^+\times(0,T)$ and
\be
&&\|\pa_t^2 \zeta\|_{L^2(0,T;L^2(\Om_{PML}^+))}\leq C \rho^{1/2} \|\pa_t^2 \xi\|_{L^2(0,T;H^{-1/2}(\Gamma_\rho^+))},   \label{lem:tr-1}\\
&&\|\nabla \pa_t \zeta\|_{L^2(0,T;L^2(\Om_{PML}^+))}\leq C \rho^{-1/2} \|\pa_t \xi\|_{L^2(0,T;H^{1/2}_0(\Gamma_\rho^+))}. \label{lem:tr-2}
\en
\end{lem}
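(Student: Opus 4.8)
The plan is to construct $\zeta$ as an explicit harmonic-type lifting of $\xi$ written in polar coordinates on the half-annulus $\Om_{PML}^+=\{(r,\theta):R<r<\rho,\ 0<\theta<\pi\}$, and then reduce both estimates to elementary one–dimensional integrals in $r$. I would expand the boundary datum in the sine basis, which already encodes the homogeneous condition on $\Gamma_0\cap\{|x|>R\}$: writing $\xi(\rho,\theta,t)=\sum_{n\ge1}\xi_n(t)\sin n\theta$ with $\xi_n(t)=\tfrac2\pi\int_0^\pi\xi(\rho,\theta,t)\sin n\theta\,d\theta$, I set
\[
\zeta(r,\theta,t):=\sum_{n\ge1}\xi_n(t)\,\phi_n(r)\,\sin n\theta,\qquad
\phi_n(r):=\frac{r^{n}-R^{2n}r^{-n}}{\rho^{n}-R^{2n}\rho^{-n}} .
\]
Each mode solves $-r^{-1}(r\phi_n')'+n^2r^{-2}\phi_n=0$ with $\phi_n(R)=0$, $\phi_n(\rho)=1$, so $\zeta=0$ on $\Gamma_R^+$ and on $\Gamma_0\cap\{|x|>R\}$, $\zeta=\xi$ on $\Gamma_\rho^+$, and the lifting acts only in the space variables, whence $\pa_t^k\zeta=\sum_n(\pa_t^k\xi_n)\phi_n\sin n\theta$. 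It therefore suffices to prove the two \emph{spatial} inequalities
\[
\Big\|\sum_n\eta_n\phi_n\sin n\theta\Big\|_{L^2(\Om_{PML}^+)}\le C\rho^{1/2}\Big\|\sum_n\eta_n\sin n\theta\Big\|_{H^{-1/2}(\Gamma_\rho^+)},
\]
\[
\Big\|\nabla\!\sum_n\eta_n\phi_n\sin n\theta\Big\|_{L^2(\Om_{PML}^+)}\le C\rho^{-1/2}\Big\|\sum_n\eta_n\sin n\theta\Big\|_{H^{1/2}_0(\Gamma_\rho^+)},
\]
and then apply them with $\eta_n=\pa_t^2\xi_n$, respectively $\eta_n=\pa_t\xi_n$, and integrate over $(0,T)$; membership $\zeta\in H^2(0,T;H^1(\Om_{PML}^+))$ follows from the second inequality read with the full $H^1$–norm.

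Next I would use orthogonality of $\{\sin n\theta\}$ on $(0,\pi)$ and the area element $r\,dr\,d\theta$ to turn everything into scalar sums: $\|\sum_n\eta_n\phi_n\sin n\theta\|_{L^2(\Om_{PML}^+)}^2=\tfrac\pi2\sum_n|\eta_n|^2\int_R^\rho\phi_n^2\,r\,dr$, while one integration by parts that uses the radial equation and $\phi_n(R)=0$, $\phi_n(\rho)=1$ gives $\|\nabla\sum_n\eta_n\phi_n\sin n\theta\|_{L^2(\Om_{PML}^+)}^2=\tfrac\pi2\sum_n|\eta_n|^2\big(\int_R^\rho|\phi_n'|^2r\,dr+\int_R^\rho n^2r^{-2}\phi_n^2 r\,dr\big)=\tfrac\pi2\sum_n|\eta_n|^2\,\rho\,\phi_n'(\rho)$. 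Combined with the series characterisation of the boundary norms, $\|\sum_n\eta_n\sin n\theta\|_{H^{s}(\Gamma_\rho^+)}^2\simeq\rho\sum_n(1+n^2)^{s}|\eta_n|^2$ (cf.\ \cite{Mclean}), the whole lemma reduces to the two bounds, uniform in $n\ge1$,
\[
\int_R^\rho\phi_n(r)^2\,r\,dr\le C\,\rho^2\,(1+n^2)^{-1/2},\qquad \rho\,\phi_n'(\rho)\le C\,(1+n^2)^{1/2}.
\]
Granting these, the $L^2$ identity yields $\|\zeta\|_{L^2}^2\le C\rho\cdot\rho\sum(1+n^2)^{-1/2}|\eta_n|^2=C\rho\,\|\eta\|_{H^{-1/2}(\Gamma_\rho^+)}^2$ and the gradient identity yields $\|\nabla\zeta\|_{L^2}^2\le C\rho^{-1}\cdot\rho\sum(1+n^2)^{1/2}|\eta_n|^2=C\rho^{-1}\|\eta\|_{H^{1/2}_0(\Gamma_\rho^+)}^2$, which are precisely \eqref{lem:tr-1}--\eqref{lem:tr-2}.

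The only genuine work is the two radial bounds, and that is where I expect the main (mild) obstacle. Writing $\phi_n(r)=\sinh(n\log(r/R))/\sinh(n\log(\rho/R))$ exhibits $\phi_n$ as an increasing profile concentrated near $r=\rho$: one gets $\phi_n(r)^2\le(1-(R/\rho)^2)^{-2}(r/\rho)^{2n}$, hence $\int_R^\rho\phi_n^2r\,dr\le C\rho^2/(n+1)\le C\rho^2(1+n^2)^{-1/2}$ with $C$ depending only on a lower bound for $\rho/R$, while $\rho\phi_n'(\rho)=n\coth(n\log(\rho/R))$. The second expression carries a harmless dependence on $\rho/R$ through $\coth$: since $\coth$ is decreasing, $n\coth(n\log(\rho/R))\le n\coth(\log(\rho/R))\le Cn\le C(1+n^2)^{1/2}$ as soon as $\rho/R$ is bounded below, e.g. $\rho\ge2R$, which may be assumed of the PML layer and holds automatically in the thick–layer regime $\rho\to\infty$ used in the convergence analysis. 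If one insists on constants independent of any geometric ratio, the same conclusion follows by multiplying $\zeta$ by a fixed radial cutoff supported in a collar of $\Gamma_\rho^+$, or, equivalently, by rescaling $x\mapsto x/\rho$ to a reference half–annulus and invoking the standard trace/extension operator that gains half a derivative ($H^{s}(\partial\cdot)\to H^{s+1/2}$ for every $s$), the powers $\rho^{\pm1/2}$ then being produced purely by the Jacobians of the rescaling. Assembling the two steps above with these radial estimates completes the proof.
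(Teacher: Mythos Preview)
Your argument is correct and complete; it simply chooses a different radial lifting than the paper. The paper does not use the harmonic profiles $\phi_n$ but rather smooth cutoffs $\chi_n\in C^\infty[R,\rho]$ with $\chi_n(\rho)=1$, $0\le\chi_n\le1$, $|\chi_n'|\le C\delta_n^{-1}$, supported in $(\rho-\delta_n,\rho)$ with $\delta_n=(\rho-R)/\sqrt{1+n^2}$, so that mode $n$ is localized in a collar of width $\sim(1+n^2)^{-1/2}$. The two radial bounds are then immediate: $\int_R^\rho|\chi_n|^2r\,dr\le\rho\,\delta_n$ gives \eqref{lem:tr-1}, and $\int_R^\rho|\chi_n'|^2r\,dr\le C\rho\,\delta_n^{-1}$ together with $\int_R^\rho n^2r^{-1}|\chi_n|^2\,dr\le n^2R^{-1}\delta_n$ gives \eqref{lem:tr-2}; no ODE identity or integration by parts is needed. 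Your harmonic extension is the minimal-energy lifting and produces the clean closed form $\|\nabla\zeta\|_{L^2}^2=\tfrac\pi2\sum_n|\eta_n|^2\,\rho\,\phi_n'(\rho)=\tfrac\pi2\sum_n|\eta_n|^2\,n\coth(n\log(\rho/R))$, at the cost of a constant that requires $\rho/R$ bounded away from $1$---harmless here, as you observe, since the PML analysis only cares about $\rho\ge R$ fixed or $\rho\to\infty$. Both routes deliver the same powers of $\rho$; the cutoff construction is a bit more elementary, while yours makes transparent why the extension gains exactly half a derivative.
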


\begin{proof}
Expand $\xi(\theta,t)$ as follows
\ben
\xi(\theta,t)=\sum_{n=1}^{\infty} \xi_n(t)\sin n\theta,\quad \xi_n=\frac{2}{\pi}\int_{0}^{\pi} \xi(\theta, t)\sin n\theta \, d\theta.
\enn
Let $\chi_n\in C^{\infty}[R,\rho]$ such that $\chi_n(\rho)=1$, $0\leq\chi_n(r)\leq1$, $| \chi_n^{'}|\leq C \delta_n^{-1}$ for $r\in [R,\rho]$, and supp$(\chi_n)\subset(\rho-\delta_n,\rho)$, where $\delta_n=(\rho-R)/\sqrt{1+n^2}$, $n\in\Z$. Define the function
\ben
\zeta(t,r,\theta):=\sum_{n=1}^{\infty} \xi_n(t)\chi_n(r)\sin n\theta.
\enn
Then, it is clear that $\zeta=0$ on $\Gamma_R^+\times(0,T)$, $\zeta=\xi$ on $\Gamma_\rho^+\times(0,T)$.  It is obvious that
\ben
\int_0^T \|\pa_t^2\zeta\|_{L^2(\Om_{PML}^+)}^2 \,dt &=&\int_0^T \int_0^{\pi} \int_R^\rho \Big| \sum_{n=1}^{\infty} \xi_n^{''}(t)\chi_n(r)\sin n \theta \Big|^2 r \,dr d\theta dt\\
&=& \int_0^T \frac{\pi}{2} \sum_{n=1}^{\infty}  \int_R^\rho \big| \xi_n^{''}(t)\big|^2 \big| \chi_n(r)\big|^2 r \, dr dt \\
&\leq & \int_0^T \frac{\pi}{2} \sum_{n=1}^{\infty} \int_{\rho-\delta_n}^\rho \big| \xi_n^{''}(t)\big|^2  r \, dr dt \\
&\leq& \int_0^T \frac{\pi}{2}\rho \sum_{n=1}^{\infty} \delta_n \big| \xi_n^{''}(t)\big|^2   \, dt \\
&\leq& \int_0^T |\rho-R| \|\pa_t^2 \xi\|_{H^{-1/2}(\Gamma_\rho^+)}^2 \, dt \\
& \leq & C \rho \|\pa_t^2 \xi\|_{L^2(0,T;H^{-1/2}(\Gamma_\rho^+))}^2.
\enn
This proves (\ref{lem:tr-1}). Similarly, one can prove (\ref{lem:tr-2}).
\end{proof}
Theorem \ref{thm:au} below describes the stability of the solution to the problem (\ref{eq:t-pml-l}) in $\Om_{PML}^+$. It  can be easily proved by combining Lemmas \ref{lem:e-pml} and  \ref{lem:pml-stability} together with the Parseval identity. Since the proof is quite similar to  \cite[Theorem 4.3]{Chen09}, we omit the detailed proof.
\begin{lem} \label{thm:au}
Let $s_1=1/T$, $(\phi, \Phi, \phi^*, \Phi^*)$ be the solution of the PML problem (\ref{eq:t-pml-l}) in $\Om_{PML}^+$. Then
\ben
& &\|\pa_t \Phi\|_{L^2(0,T;L^2(\Om_{PML}^+))}+\|\nabla\cdot \Phi\|_{L^2(0,T;L^2(\Om_{PML}^+))}\\
&\leq & (1+\sigma T)^2 T \left( \rho  \|\pa_t^2 \xi\|_{L^2(0,T;H^{-1/2}(\Gamma_\rho^+))} + \rho^{-1} \|\pa_t \xi\|_{L^2(0,T;H^{1/2}_0(\Gamma_\rho^+))} \right ).
\enn
\end{lem}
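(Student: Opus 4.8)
The plan is to reproduce, on the annular layer $\Om_{PML}^+$, the Laplace-domain argument of \cite[Theorem 4.3]{Chen09}: pass to the $s$-domain, lift the Dirichlet datum $\xi$ on $\Gamma_\rho^+$ by the function $\zeta$ of Lemma \ref{lem:pml-stability}, apply the $s$-domain coercivity and stability bounds of Lemmas \ref{lem:a-pml}--\ref{lem:e-pml} (which hold verbatim with $\Om_\rho^+$ replaced by $\Om_{PML}^+$ and $H_0^1$ replaced by the subspace of $H^1(\Om_{PML}^+)$ vanishing on all of $\Gamma_R^+\cup(\Gamma_0\cap\{R<|x|<\rho\})\cup\Gamma_\rho^+$, since the proof of Lemma \ref{lem:a-pml} is purely pointwise in $x$), and finish with the Parseval identity (\ref{PI}) and the prescribed choice $s_1=1/T$. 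Throughout I write $\phi_L=\mathscr{L}\phi$, etc.

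First I would apply the Laplace transform to (\ref{eq:t-pml-l}) and eliminate $\Phi_L$, $\phi^*_L$, $\Phi^*_L$ exactly as in the derivation of (\ref{eqs:s-pml}); this reduces the system to (\ref{eqs:s-pml}) with $f=0$ and the inhomogeneous boundary value $\phi_L=\xi_L$ on $\Gamma_\rho^+$, and yields $s\Phi_L=-A\nabla\phi_L$, while the first equation of (\ref{eq:t-pml-l}) expresses $\nabla\cdot\Phi_L$ through $\phi_L$ alone with $\|\nabla\cdot\Phi_L\|_{L^2}\le C(1+\sigma_\rho/s_1)^2\|s\phi_L\|_{L^2}$ (after inserting the multiple of $\phi_L/s$ that replaces $\phi^*_L$). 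Since $|\alpha|,|\beta|\ge1$ and $\|A\|_\infty\le C(1+\sigma_\rho/s_1)$, it is then enough to estimate $\|A\nabla\phi_L\|_{L^2(\Om_{PML}^+)}$ and $\|s\phi_L\|_{L^2(\Om_{PML}^+)}$, which dominate $\|s\Phi_L\|_{L^2}$ and $\|\nabla\cdot\Phi_L\|_{L^2}$.

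Next I would introduce the lifting. Let $\zeta$ be the function of Lemma \ref{lem:pml-stability} (it vanishes on $\Gamma_0$ as well, being built from $\{\sin n\theta\}$), with Laplace transform $\zeta_L$. Observing that multiplying the unknown by $s$ scales the form $\tilde a(\cdot,\cdot)$ of (\ref{eq:v-pml}) by $s$, the function $v_L:=s\phi_L$ solves the same boundary value problem with datum $s\xi_L$; hence $w_L:=v_L-s\zeta_L$ vanishes on $\partial\Om_{PML}^+$ and satisfies $\tilde a(w_L,\psi)=-\tilde a(s\zeta_L,\psi)$ with, using $|\alpha|,|\beta|\ge1$,
\[
|\tilde a(s\zeta_L,\psi)|\le C(1+\sigma_\rho/s_1)^2\bigl(\|s\nabla\zeta_L\|_{L^2}+\|s^2\zeta_L\|_{L^2}\bigr)\bigl(\|\nabla\psi\|_{L^2}+\|s\psi\|_{L^2}\bigr).
\]
I work with $s\phi_L$ rather than $\phi_L$ on purpose, so that $\|s\nabla\zeta_L\|$ and $\|s^2\zeta_L\|$ — the Laplace transforms of $\nabla\partial_t\zeta$ and $\partial_t^2\zeta$ — are what appear on the right, matching (\ref{lem:tr-1})--(\ref{lem:tr-2}). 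Applying the coercivity of $\tilde a(\cdot,\cdot)$ (Lemma \ref{lem:a-pml} on $\Om_{PML}^+$) and the Lax--Milgram lemma, and then paralleling the proof of Lemma \ref{lem:e-pml}, should give an $s$-domain bound of the form $\|A\nabla\phi_L\|_{L^2(\Om_{PML}^+)}+\|s\phi_L\|_{L^2(\Om_{PML}^+)}\le C\,T(1+\sigma_\rho T)^2\bigl(\|s\nabla\zeta_L\|_{L^2}+\|s^2\zeta_L\|_{L^2}\bigr)$, the factor $1/s_1=T$ coming out of the coercivity constants once the powers of $|s|/s_1$ in Lemma \ref{lem:a-pml} are cancelled against the $|s|^{-1}$ produced by $v_L=s\phi_L$ (this is where $s_1=1/T$, hence $|s|\ge1/T$, is used).

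Finally I would return to the time domain: with $\|g\|_{L^2(0,T)}^2\le e^2\int_0^\infty e^{-2s_1t}|g|^2\,dt$, the Parseval identity (\ref{PI}) at $s_1=1/T$, the vanishing initial data of $\phi$, $\Phi$, $\zeta$ (so that $\mathscr{L}(\partial_t\Phi)=s\Phi_L$, $\mathscr{L}(\partial_t^2\zeta)=s^2\zeta_L$, and so on), and $e^{-2s_1t}\le1$, the $s$-domain bound of the previous paragraph integrates to $\|\partial_t\Phi\|_{L^2(0,T;L^2)}+\|\nabla\cdot\Phi\|_{L^2(0,T;L^2)}\le C\,T(1+\sigma_\rho T)^2\bigl(\|\nabla\partial_t\zeta\|_{L^2(0,T;L^2)}+\|\partial_t^2\zeta\|_{L^2(0,T;L^2)}\bigr)$, after which Lemma \ref{lem:pml-stability} converts the right-hand side into a constant times $\rho^{-1}\|\partial_t\xi\|_{L^2(0,T;H^{1/2}_0(\Gamma_\rho^+))}+\rho\|\partial_t^2\xi\|_{L^2(0,T;H^{-1/2}(\Gamma_\rho^+))}$ via (\ref{lem:tr-1})--(\ref{lem:tr-2}), which is the asserted estimate. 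The hard part will be the weight bookkeeping of the third paragraph: Lemma \ref{lem:a-pml} carries several unfavourable powers of $s_1/|s|$ and $|s|/s_1$, and the whole argument must be organised — applying the stability bound to $s\phi_L$, exploiting $|\alpha|,|\beta|\ge1$, and using $s_1=1/T$ — so that the accumulated $s$-weight is a bounded function of $s_2$ and the $s_2$-integral closes with exactly the constant $(1+\sigma T)^2T$ rather than a cruder power; this is precisely the step that follows \cite[Theorem 4.3]{Chen09}.
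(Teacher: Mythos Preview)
Your proposal is correct and follows exactly the route the paper indicates: the paper omits the proof, stating only that it combines Lemmas \ref{lem:e-pml} and \ref{lem:pml-stability} with the Parseval identity in the manner of \cite[Theorem 4.3]{Chen09}, and that is precisely your outline --- Laplace transform, lift the boundary datum by the $\zeta$ of Lemma \ref{lem:pml-stability}, apply the coercivity/stability of Lemmas \ref{lem:a-pml}--\ref{lem:e-pml} on $\Om_{PML}^+$, and Parseval back at $s_1=1/T$. Your identification of the weight bookkeeping as the delicate step is accurate, and the device of working with $s\phi_L$ rather than $\phi_L$ is exactly what is needed to make the right-hand side land on $\|s^2\zeta_L\|$ and $\|s\nabla\zeta_L\|$, matching (\ref{lem:tr-1})--(\ref{lem:tr-2}).
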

We also need an estimate for the convolution proved in \cite[Lemma 5.2]{Chen09}.
\begin{lem} \label{lem:pml-2}
Let $g_1$, $g_2$ $\in $ $L^2(0,T)$. For any $\textnormal{Re}(s)=s_1>0$, it holds that
\be
\| g_1 \ast g_2\|_{L^2(0,T)}\leq e^{s_1 t} \left( \max_{-\infty<s_2<+\infty}|\mathscr{L}(g_1)(s_1+is_2)| \right ) \| g_2\|_{L^2(0,T)}.
\en
\end{lem}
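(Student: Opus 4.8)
The plan is to reduce the claim to Plancherel's theorem applied to exponentially weighted versions of $g_1$ and $g_2$. First I would extend $g_1$ and $g_2$ by zero to $(0,\infty)$ and set $h_j(t):=e^{-s_1t}g_j(t)$ for $j=1,2$. Since each $g_j\in L^2(0,T)$ has support in the bounded interval $(0,T)$, the Cauchy--Schwarz inequality shows $h_j\in L^1(0,\infty)\cap L^2(0,\infty)$, so $h_1\ast h_2$ is well defined and, by Young's inequality, lies in $L^2(0,\infty)$. The elementary identity underlying the whole argument is that conjugating by the exponential turns the convolution into the convolution of the weighted functions: inserting $e^{-s_1t}=e^{-s_1(t-\tau)}e^{-s_1\tau}$ inside $\int_0^t g_1(t-\tau)g_2(\tau)\,d\tau$ gives $e^{-s_1t}(g_1\ast g_2)(t)=(h_1\ast h_2)(t)$ for a.e. $t>0$.

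Next I would pass to the Fourier transform in time. Because $h_1\in L^1$ and $h_2\in L^2$, the convolution theorem gives $\widehat{h_1\ast h_2}=\widehat{h_1}\,\widehat{h_2}$, and a direct computation identifies the Fourier transform of the weighted function with the Laplace transform of the original function restricted to the vertical line $\textnormal{Re}(s)=s_1$, namely $\widehat{h_j}(s_2)=\int_0^\infty e^{-(s_1+is_2)t}g_j(t)\,dt=\mathscr{L}(g_j)(s_1+is_2)$; in particular $|\widehat{h_1}|\le\|h_1\|_{L^1(0,\infty)}$ is bounded, so $\max_{s_2\in\R}|\mathscr{L}(g_1)(s_1+is_2)|$ is finite. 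Plancherel's identity then yields
\[
\|h_1\ast h_2\|_{L^2(0,\infty)}^2=\frac{1}{2\pi}\int_{-\infty}^{\infty}|\mathscr{L}(g_1)(s_1+is_2)|^2\,|\widehat{h_2}(s_2)|^2\,ds_2,
\]
and bounding $|\mathscr{L}(g_1)(s_1+is_2)|$ by its supremum and applying Plancherel once more to $h_2$ gives $\|h_1\ast h_2\|_{L^2(0,\infty)}\le\bigl(\max_{s_2\in\R}|\mathscr{L}(g_1)(s_1+is_2)|\bigr)\|h_2\|_{L^2(0,\infty)}$.

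Finally I would remove the weights on the interval $(0,T)$. On the one hand $\|h_2\|_{L^2(0,\infty)}=\|e^{-s_1t}g_2\|_{L^2(0,T)}\le\|g_2\|_{L^2(0,T)}$ because $0<e^{-s_1t}\le1$; on the other hand, restricting to $(0,T)$ and using $(g_1\ast g_2)(t)=e^{s_1t}(h_1\ast h_2)(t)$ with $e^{s_1t}\le e^{s_1T}$ there, we obtain $\|g_1\ast g_2\|_{L^2(0,T)}\le e^{s_1T}\|h_1\ast h_2\|_{L^2(0,T)}\le e^{s_1T}\|h_1\ast h_2\|_{L^2(0,\infty)}$. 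Chaining these estimates gives the stated inequality (with the pointwise weight $e^{s_1t}$ dominated by its value $e^{s_1T}$ on $(0,T)$). The argument is essentially routine; the only points deserving care are the integrability bookkeeping needed to invoke the convolution theorem and Plancherel's identity without qualification — both immediate from $g_j\in L^2(0,T)$ with compact time support — and the clean identification $\widehat{h_j}(\cdot)=\mathscr{L}(g_j)(s_1+i\,\cdot)$, which is what lets the maximum of the Laplace transform over the vertical line enter the bound.
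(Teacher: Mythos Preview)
Your argument is correct. The paper does not supply its own proof of this lemma; it simply cites \cite[Lemma 5.2]{Chen09}. The route you take---zero extension, exponential weighting $h_j=e^{-s_1t}g_j$, the identity $e^{-s_1t}(g_1\ast g_2)=h_1\ast h_2$, identification $\widehat{h_j}(s_2)=\mathscr{L}(g_j)(s_1+is_2)$, Plancherel plus the sup bound, and finally undoing the weight on $(0,T)$---is the standard proof and almost certainly coincides with the one in the cited reference. Your remark that the factor $e^{s_1 t}$ in the statement is to be read as $e^{s_1 T}$ (the maximum of $e^{s_1 t}$ on $(0,T)$) is also the right way to interpret the inequality, and the integrability checks you flag ($h_1\in L^1$, $h_2\in L^2$) are exactly what is needed to justify the convolution theorem and Plancherel without caveat.
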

The following result follows directly from the proof of  Lemma \ref{lem:t}.
\begin{lem} \label{lem:con}
Given $t\geq0$ and $\omega\in L^2(0,T;H^{1/2}_0(\Gamma_R^+))$ with the initial condition $\omega(\cdot,0)=0$, it holds that
\ben
-\textnormal{Re}\int_0^t  e^{-2s_1 \tau} \Big\langle \mathscr{T} \left(\int_0^\tau \omega(x,\eta) \,d \eta \right) ,\, \omega(x,\eta)   \Big \rangle\, d\tau\geq0.
\enn
\end{lem}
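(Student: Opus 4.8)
The plan is to recognize Lemma \ref{lem:con} as the coercivity estimate of Lemma \ref{lem:t} applied to the time-antiderivative of $\omega$. Put $\psi(x,\tau):=\int_0^\tau\omega(x,\eta)\,d\eta$, so that $\partial_\tau\psi=\omega$ and $\psi(\cdot,0)=\partial_\tau\psi(\cdot,0)=0$; then the quantity to be estimated is exactly $-\,\textnormal{Re}\int_0^t e^{-2s_1\tau}\langle\mathscr{T}\psi,\,\partial_\tau\psi\rangle_{\Gamma_R^+}\,d\tau$. This is the integrand of Lemma \ref{lem:t} with the test function $\omega$ there replaced by $\psi$ and the upper limit $T$ replaced by $t$; since $T>0$ is arbitrary we may assume $t\le T$.

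Concretely I would rerun the three steps in the proof of Lemma \ref{lem:t}. Extend $\omega$ by zero to $\tau>t$, call this $\tilde\omega$, and set $\tilde\psi(x,\tau):=\int_0^\tau\tilde\omega(x,\eta)\,d\eta$; then $\tilde\psi$ is continuous in $\tau$, constant on $(t,\infty)$, $\partial_\tau\tilde\psi=\tilde\omega$ has support in $(0,t)$, and $e^{-s_1\tau}\tilde\psi\in L^2(0,\infty;H^{1/2}_0(\Gamma_R^+))$ for $s_1>0$. By Lemma \ref{dtn-t} and the causality of the exterior wave equation, $\tilde{\mathscr{T}}\tilde\psi=\mathscr{T}\psi$ on $(0,t)$, while $\partial_\tau\tilde\psi=0$ on $(t,\infty)$, so the time integral extends to $(0,\infty)$. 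Applying the Parseval identity (\ref{PI}) together with $\mathscr{L}(\partial_\tau\tilde\psi)=s\tilde\psi_L$ and $\mathscr{L}(\tilde{\mathscr{T}}\tilde\psi)=\mathscr{G}\tilde\psi_L$ gives
\ben
\textnormal{Re}\int_0^t e^{-2s_1\tau}\langle\mathscr{T}\psi,\,\partial_\tau\psi\rangle_{\Gamma_R^+}\,d\tau
&=&\frac{1}{2\pi}\int_{-\infty}^{\infty}\textnormal{Re}\,\langle\mathscr{G}\tilde\psi_L,\,s\tilde\psi_L\rangle_{\Gamma_R^+}\,ds_2\\
&=&\frac{1}{2\pi}\int_{-\infty}^{\infty}|s|^2\,\textnormal{Re}\,\langle s^{-1}\mathscr{G}\tilde\psi_L,\,\tilde\psi_L\rangle_{\Gamma_R^+}\,ds_2,
\enn
and since $\tilde\psi_L(s)\in H^{1/2}_0(\Gamma_R^+)$ for a.e.\ $s_2$, Lemma \ref{lem:G} makes the integrand $\le0$; hence the left-hand side is $\le0$ and the claimed inequality follows after multiplying by $-1$.

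The one delicate point, and the expected main obstacle, is the mismatch in regularity: $\omega$ is merely $L^2$ in time, so $\psi$ need not lie in the class $C(0,T;H^{1/2}_0(\Gamma_R^+))\cap C^1(0,T;H^{-1/2}(\Gamma_R^+))$ under which $\mathscr{T}$ was originally defined and Lemma \ref{lem:t} stated. This causes no real trouble, because the manipulations above use only $e^{-s_1\tau}\tilde\psi\in L^2(0,\infty;H^{1/2}_0(\Gamma_R^+))$ and the boundedness of $\mathscr{G}$ from Lemma \ref{lem:bd}, so they remain valid verbatim; alternatively, if one prefers to stay within the hypotheses of Lemma \ref{lem:t}, one approximates $\omega$ in $L^2(0,T;H^{1/2}_0(\Gamma_R^+))$ by smooth functions vanishing at $\tau=0$, applies Lemma \ref{lem:t} to the corresponding antiderivatives, and passes to the limit using that $\omega\mapsto\int_0^t e^{-2s_1\tau}\langle\mathscr{T}(\int_0^\tau\omega\,d\eta),\,\omega\rangle_{\Gamma_R^+}\,d\tau$ is continuous on $L^2(0,T;H^{1/2}_0(\Gamma_R^+))$ by Lemma \ref{lem:bd}. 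Apart from this routine density step, the statement is a direct corollary of Lemma \ref{lem:G}.
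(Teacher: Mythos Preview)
Your proposal is correct and matches the paper's approach exactly: the paper states only that the result ``follows directly from the proof of Lemma \ref{lem:t},'' and your reduction via $\psi(\cdot,\tau)=\int_0^\tau\omega\,d\eta$ together with the Parseval/Lemma \ref{lem:G} argument is precisely that. Your discussion of the regularity mismatch and the density workaround is a welcome clarification the paper omits.
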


Now, we are ready to verify the exponential convergence of the time-domain PML method.
\begin{thm}  \label{convergence}
Let $(u,p)$ and $(\hat u, \hat p, \hat u^*, \hat p^*)$ be the solution of the problems (\ref{eqs:h-1}) and (\ref{t-pml}) with $s_1=T^{-1}$, respectively. Then
\ben
&& \max_{0\leq t \leq T}\left( \|u-\hat u \|_{L^2(\Om_R^+)}+\|p-\hat p \|_{L^2(\Om_R^+) }\right )\\
\leq &&
C\,(1+\sigma T)^2 \rho T^{3/2} e^{-\rho \hat \sigma(\rho)\left(1-\frac{R^2}{\rho^2}\right)}    \| \pa_t^2 \hat u\|_{L^2(0,T;H^{-1/2}(\Gamma_R^+))}   \\ &&+C\,(1+\sigma T)^2 \rho^{-1}T^{3/2} e^{-\rho \hat \sigma(\rho)\left(1-\frac{R^2}{\rho^2}\right)}    \| \pa_t \hat u\|_{L^2(0,T;H^{1/2}_0(\Gamma_R^+))},
\enn
where $C>0$ is a constant.
\end{thm}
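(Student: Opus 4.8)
The plan is to exploit that the absorbing medium is inactive inside $\Om_R^+$, so that $(u,p)$ and $(\hat u,\hat p)$ solve the \emph{same} first order wave system of (\ref{eqs:h-1}) (with vanishing source and data) in $\Om_R^+$, and the whole discrepancy sits in the effective boundary condition on $\Gamma_R^+$. In the Laplace domain I would introduce the PML extension of the PML trace,
\[
\check u_L(r,\theta,s):=\sum_{n=1}^{\infty}\frac{K_n(s\tilde r)}{K_n(sR)}\,\hat u_L^{\,n}(R,s)\sin n\theta,\qquad \hat u_L^{\,n}(R,s)=\frac{2}{\pi}\int_0^{\pi}\hat u_L(R,\theta,s)\sin n\theta\,d\theta .
\]
By construction $\check u_L$ solves the stretched equation (\ref{eq:s-pml}) in $\Om\ba\ov{B_R^+}$, coincides with $\hat u_L$ on $\Gamma_R^+$ and, comparing with (\ref{eq:gul}), satisfies $\pa_r\check u_L=\mathscr G(\hat u_L|_{\Gamma_R^+})$ on $\Gamma_R^+$. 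Hence $w_L:=\hat u_L-\check u_L$, restricted to the layer $\Om_{PML}^+$, solves the stretched equation with $w_L=0$ on $\Gamma_R^+$ and $w_L=-\check u_L|_{\Gamma_\rho^+}$ on $\Gamma_\rho^+$, while $\pa_r\hat u_L|_{\Gamma_R^+}=\mathscr G(\hat u_L|_{\Gamma_R^+})+\pa_r w_L|_{\Gamma_R^+}$. Transforming back and using $\mathscr T=\mathscr L^{-1}\circ\mathscr G\circ\mathscr L$ with (\ref{bc:dtn-1}), the PML solution satisfies $\hat p\cdot\hat x+\mathscr T(\int_0^t\hat u\,d\tau)=\zeta$ on $\Gamma_R^+\times(0,T)$, where $\zeta:=p_w\cdot\hat x|_{\Gamma_R^+}$ and $p_w$ is the $p$-field associated with $w$ through (\ref{eq:s-m}). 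Consequently the error $(e,e_p):=(u-\hat u,p-\hat p)$ solves (\ref{eqs:h-1}) in $\Om_R^+\times(0,T)$ with zero source and zero data, but with the perturbed condition $e_p\cdot\hat x+\mathscr T(\int_0^t e\,d\tau)=-\zeta$ on $\Gamma_R^+$.

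Next I would control $\zeta$ through the layer and show it is exponentially small. The quadruple built from $w$ (its $p_w,u_w^*,p_w^*$ via (\ref{eq:s-m})) is exactly a solution of the auxiliary PML-layer problem (\ref{eq:t-pml-l}) on $\Om_{PML}^+$ with homogeneous data on $\Gamma_R^+$ and data $\xi:=-\check u|_{\Gamma_\rho^+}$ on $\Gamma_\rho^+$. Taking $s_1=1/T$, Lemma \ref{thm:au} (and its analogue for once time-differentiated fields) bounds $\|\pa_t p_w\|_{L^2(0,T;L^2(\Om_{PML}^+))}+\|\nabla\cdot p_w\|_{L^2(0,T;L^2(\Om_{PML}^+))}$ by $(1+\sigma T)^2T\big(\rho\|\pa_t^2\xi\|_{L^2(0,T;H^{-1/2}(\Gamma_\rho^+))}+\rho^{-1}\|\pa_t\xi\|_{L^2(0,T;H_0^{1/2}(\Gamma_\rho^+))}\big)$, and via the $H(\mathrm{div})$ trace theorem (with zero initial data) this controls $\|\zeta\|_{L^2(0,T;H^{-1/2}(\Gamma_R^+))}$ and $\|\pa_t\zeta\|_{L^2(0,T;H^{-1/2}(\Gamma_R^+))}$ by the same right-hand side up to extra powers of $T$. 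The $n$-th sine coefficient of $\xi_L=-\check u_L|_{\Gamma_\rho^+}$ equals $-K_n(s\tilde\rho)K_n(sR)^{-1}\hat u_L^{\,n}(R,s)$ with $\tilde\rho=\rho\beta(\rho)=\rho+s^{-1}\rho\hat\sigma(\rho)$; using the monotonicity and large-argument behaviour of the modified Bessel functions $K_n$ (the tools already used for Lemmas \ref{lem:bd} and \ref{lem:G}) one shows, uniformly in $n\geq1$ and in $s$ on the line $\mathrm{Re}\,s=1/T$, that $|K_n(s\tilde\rho)/K_n(sR)|\leq C\,e^{-\rho\hat\sigma(\rho)(1-R^2/\rho^2)}$. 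Combined with the Parseval identity and the convolution estimate of Lemma \ref{lem:pml-2}, this yields $\|\pa_t^k\xi\|_{L^2(0,T;H^{\pm1/2}(\Gamma_\rho^+))}\leq C\,e^{-\rho\hat\sigma(\rho)(1-R^2/\rho^2)}\|\pa_t^k\hat u\|_{L^2(0,T;H^{\pm1/2}(\Gamma_R^+))}$ for $k=1,2$, hence the exponential bound on $\|\zeta\|_{L^2(0,T;H^{-1/2}(\Gamma_R^+))}$ and $\|\pa_t\zeta\|_{L^2(0,T;H^{-1/2}(\Gamma_R^+))}$ in terms of $\hat u|_{\Gamma_R^+}$.

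Finally I would run an energy estimate for the error, mirroring the well-posedness proof of (\ref{eqs:wave-b}). Setting $v=\int_0^t e\,d\tau$ one has $\pa_t^2 v-\Delta v=0$ in $\Om_R^+$, $v=0$ on $\pa D\cup\Gamma_0$, and $\pa_r v-\mathscr T v=\zeta$ on $\Gamma_R^+$ with zero initial data. Testing with $e^{-2s_1\tau}\pa_\tau v=e^{-2s_1\tau}e$, integrating over $\Om_R^+\times(0,t)$ and discarding the DtN contribution by Lemma \ref{lem:con}, one is left with the boundary term $\mathrm{Re}\int_0^t e^{-2s_1\tau}\langle\zeta,\pa_\tau v\rangle_{\Gamma_R^+}\,d\tau$; integrating this by parts in time produces only pairings of $\zeta$, $\pa_t\zeta$ with $v$, whose $H^{1/2}(\Gamma_R^+)$ norm is controlled by $\|\nabla v\|_{L^2(\Om_R^+)}=\|e_p\|_{L^2(\Om_R^+)}$ and $\|v\|_{L^2(\Om_R^+)}\le T^{1/2}\|e\|_{L^2(0,t;L^2(\Om_R^+))}$. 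Applying Young's inequality with $\vep$ and letting $s_1=1/T$ (so the weight costs only a bounded factor on $[0,T]$) gives $\max_{0\le t\le T}(\|e\|_{L^2(\Om_R^+)}+\|e_p\|_{L^2(\Om_R^+)})\le C\,T^{1/2}\big(\|\zeta\|_{L^2(0,T;H^{-1/2}(\Gamma_R^+))}+\|\pa_t\zeta\|_{L^2(0,T;H^{-1/2}(\Gamma_R^+))}\big)$, and combining with the previous paragraph yields precisely the claimed bound. The main obstacle is the Bessel-function step: deriving the sharp exponent $\rho\hat\sigma(\rho)(1-R^2/\rho^2)$ from the ratio $|K_n(s\tilde\rho)/K_n(sR)|$ \emph{uniformly in the order $n$} and along $\mathrm{Re}\,s=1/T$, and then propagating that decay through Parseval, Lemma \ref{lem:pml-2}, the layer stability of Lemma \ref{thm:au} and the $H(\mathrm{div})$-to-$H^{-1/2}$ passage without creating compensating powers of $\rho$; a secondary technical difficulty is the careful bookkeeping of time derivatives and time integrals among $e$, $e_p$, $v$, $p_w$, $\zeta$ so that every boundary pairing is dual and every Sobolev index is one that is actually controlled.
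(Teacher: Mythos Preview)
Your overall plan mirrors the paper's: same error system in $\Om_R^+$, same identification of the boundary residual $\hat p\cdot\hat x+\mathscr T(\int_0^t\hat u\,d\tau)$ with the normal trace of the layer field $\hat p-\tilde{\hat p}$ (your $p_w$), same use of the auxiliary layer stability (Lemma~\ref{thm:au}) with data $\xi=-\tilde{\hat u}|_{\Gamma_\rho^+}$, and the same Bessel-ratio/convolution device for the exponential factor (note: the relevant estimate is Lemma~\ref{lem:pml-1}, not the $K_n'/K_n$ identities behind Lemmas~\ref{lem:bd}--\ref{lem:G}).

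The substantive difference is how the boundary residual is fed into the energy estimate, and here your route costs one time derivative too many. You propose to take the $H(\mathrm{div})$ normal trace to obtain $\|\zeta\|_{L^2(0,T;H^{-1/2}(\Gamma_R^+))}$ and $\|\pa_t\zeta\|_{L^2(0,T;H^{-1/2}(\Gamma_R^+))}$, then integrate $\langle\zeta,\pa_\tau v\rangle_{\Gamma_R^+}$ by parts in time. But controlling $\|\pa_t\zeta\|_{H^{-1/2}}$ requires $\pa_t p_w\in H(\mathrm{div})$, i.e.\ $\nabla\cdot\pa_t p_w\in L^2(\Om_{PML}^+)$; Lemma~\ref{thm:au} only delivers $\|\pa_t p_w\|_{L^2}+\|\nabla\cdot p_w\|_{L^2}$, and invoking ``its analogue for once time-differentiated fields'' forces $\|\pa_t^3\xi\|$ on the right, hence $\|\pa_t^3\hat u\|_{L^2(0,T;H^{-1/2}(\Gamma_R^+))}$ instead of the $\|\pa_t^2\hat u\|$ claimed in the theorem.

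The paper sidesteps this by never passing through $H^{-1/2}(\Gamma_R^+)$. It introduces the duality pair $X(0,T;\Om_R^+)$, $Y(0,T;\Gamma_R^+)$, extends the test function $v=\omega=u-\hat u$ into $\Om_{PML}^+$ with $v=0$ on $\Gamma_\rho^+$, and uses the divergence theorem there to rewrite $\int_0^T\langle(\hat p-\tilde{\hat p})\cdot\hat x,v\rangle_{\Gamma_R^+}\,dt$ as bulk pairings $(\nabla\cdot(\hat p-\tilde{\hat p}),v)_{\Om_{PML}^+}+(\hat p-\tilde{\hat p},\nabla v)_{\Om_{PML}^+}$. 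The time integration by parts is then done on the \emph{bulk} term $(\hat p-\tilde{\hat p},\nabla v)$, yielding $(\pa_t(\hat p-\tilde{\hat p}),\nabla v^*)$ with $v^*=\int_0^t v\,d\tau$; this only calls on $\|\pa_t(\hat p-\tilde{\hat p})\|_{L^2(\Om_{PML}^+)}$ and $\|\nabla\cdot(\hat p-\tilde{\hat p})\|_{L^2(\Om_{PML}^+)}$, exactly what Lemma~\ref{thm:au} supplies, and keeps the right-hand side at $\pa_t^2\hat u$, $\pa_t\hat u$. In short: extend the test function into the layer and integrate by parts in the bulk rather than on the boundary.
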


\begin{proof}
By (\ref{eqs:h-1}) and (\ref{eq:t-pml-a})-(\ref{eq:t-pml-b}), it follows that
\be
\frac{\pa (u-\hat u)}{\pa t}+\nabla\cdot(p-\hat p)=0 \quad &&\mbox{in} \; \Om_R^{+}\times(0,T), \label{eq:c-1}\\
\frac{\pa (p-\hat p)}{\pa t}+\nabla (u-\hat u)=0  \quad &&\mbox{in} \; \Om_R^{+}\times(0,T). \label{eq:c-2}
\en
Multiplying both sides of (\ref{eq:c-1}) by a test function $v\in X_R$, using the DtN boundary condition (\ref{bc:dtn-1}) and Green's first formula, we obtain
\be \label{eq:c-3}
\begin{aligned}
&\Big (\frac{\pa (u-\hat u)}{\pa t},\,v  \Big)_{\Om_R^{+}}- (p-\hat p,\, \nabla v )_{\Om_R^{+}}-\Big\langle \mathscr{T}\Big(\int_0^t (u-\hat u)\, d\tau\Big),\, v \Big \rangle_{\Gamma_R^+} \\
=&\Big\langle \hat p \cdot \hat x+\mathscr{T}\Big(\int_0^t \hat u\, d\tau\Big),\, v \Big\rangle_{\Gamma_R^+}.
\end{aligned}
\en
Define
\ben
\om:=u-\hat u,\quad \om^*:=\int_0^t u-\hat u\,d\tau.
\enn
Taking $v=\om$ in (\ref{eq:c-3}) and applying (\ref{eq:c-3}) with $p-\hat p|_{t=0}=0$, we have
\be \label{eq:c-4}
\frac{1}{2}\frac{d}{dt}\Big( \| \om \|_{L^2(\Om_R^+)}^2 +  \| \nabla \om^* \|_{L^2(\Om_R^+)}^2  \Big)-\langle \mathscr{T}(\om^*),\, \om \rangle_{\Gamma_R^+}= \Big\langle \hat p \cdot \hat x+\mathscr{T}\Big(\int_0^t \hat u\, d\tau\Big),\, \om \Big\rangle_{\Gamma_R^+}.
\en
Denote the spaces
\ben
&&X(0,T;\Om_R^+):=\Big\{v\in L^{\infty}(0,T;L^2(\Om_R^+)),\, v^*=\int_0^t v\, dt \in L^{\infty}(0,T;H^1(\Om_R^+))\Big\},\\&&Y(0,T;\Gamma_R^+):=\Big\{\phi:\int_0^T \langle \phi,\,v \rangle_{\Gamma_R^+}\,dt<\infty, \forall\; v\in X(0,T;\Om_R^+) \Big\}.
\enn
It is clear that $X(0,T;\Om_R^+)$ and $Y(0,T;\Gamma_R^+)$  are Banach spaces with the norms, respectively
\be
&&\|v\|_{X(0,T;\Om_R^+)}=\sup_{0\leq t\leq T}\Big( \| v\|_{L^2(\Om_R^+)}^2  + \| \nabla v^*\|_{L^2(\Om_R^+)}^2 \Big)^{1/2},\\
&&\|\phi \|_{Y(0,T;\Gamma_R^+)}=\sup_{v\in X(0,T;\Om_R^+)}\frac{\Big| \int_0^T \langle \phi,\, v \rangle_{\Gamma_R^+} \,dt \Big| }{\|v\|_{X(0,T;\Om_R^+)}}. \label{eq:con-y}
\en
Multiplying both sides of (\ref{eq:c-4}) by $e^{-2s_1 t}$ and then integrating from $0$ to $t$.  Since $\om|_{t=0}=\om^*|_{t=0}=0$, taking the real part of the resulting identity and using Lemma \ref{lem:con} and trace theorem, we obtain
\ben
\|e^{-s_1  t} \om\|_{{X(0,T;\Om_R^+)}}^2\leq& C \Big\| e^{-s_1  t} \Big (\hat p \cdot \hat x+\mathscr{T}\Big(\int_0^t \hat u\, d\tau \Big) \Big )\Big \|_{Y(0,T;\Gamma_R^+)}  \, \|e^{-s_1  t} \om\|_{Y(0,T;\Gamma_R^+)}\\
\leq & C \| e^{-s_1  t}  \Big (\hat p \cdot \hat x+\mathscr{T}\Big(\int_0^t \hat u\, d\tau\Big)\Big ) \Big\|_{Y(0,T;\Gamma_R^+)}  \, \|e^{-s_1  t} \om\|_{X(0,T;\Om_R^+)}.
\enn
Hence, by taking $s_1\rightarrow 0$
\be \label{eq:con-4}
\sup_{0\leq t\leq T}\Big( \| \om\|_{L^2(\Om_R^+)}^2  + \| \nabla \om^*\|_{L^2(\Om_R^+)}^2 \Big)  \leq C\Big \| \hat p \cdot \hat x+\mathscr{T}\Big(\int_0^t \hat u\, d\tau \Big)   \Big\|_{Y(0,T;\Gamma_R^+)}.
\en
It is clear that $\mathscr{T}\Big(\int_0^t \hat u\, d\tau \Big)  =-\tilde {\hat p}\cdot \hat x$ on $\Gamma_R^+$, where $\tilde {\hat p}$ defines the PML extension of $\hat p$. Hence, in order to estimate $\Big\| \hat p \cdot \hat x+\mathscr{T}\Big(\int_0^t \hat u\, d\tau \Big)   \Big\|_{Y(0,T;\Gamma_R^+)}$, it suffices to estimate $\|(\hat p-\tilde{\hat p})\cdot \hat x\|_{Y(0,T;\Gamma_R^+)}$.
Since any function $v\in X(0,T;\Om_R^+)$ can be extended into $\Om_{PML}^+\times(0,T)$ such that $v=0$ on $\Gamma_\rho^+\times(0,T)$ and $\|v\|_{X(0,T;\Om_R^+)}\leq C \|v\|_{X(0,T;\Om_{PML}^+)}$, it follows by (\ref{eq:con-y}) that
\be
\begin{aligned}  \label{eq:con-5}
\|(\hat p-\tilde{\hat p})\cdot \hat x\|_{Y(0,T;\Gamma_R^+)}  =&& \sup_{v\in X(0,T;\Om_R^+)}\frac{\Big| \int_0^T \langle \phi,\, v \rangle_{\Gamma_R^+} \,dt \Big| }{\|v\|_{X(0,T;\Om_R^+)}}\\
\leq && \sup_{v\in X(0,T;\Om_R^+)}\frac{\Big| \int_0^T \langle \phi,\, v \rangle_{\Gamma_R^+} \,dt \Big| }{\|v\|_{X(0,T;\Om_{PML}^+)}}.
\end{aligned}
\en
For any $v\in X(0,T;\Om_{PML}^+)$ it has that $v=0$ on $\Gamma_\rho^+$, and then, by divergence theorem,
\be \label{eq:con-6}
\int_0^T \langle (\hat p- \tilde{\hat p})\cdot \hat x,\, v \rangle_{\Gamma_R^+}\, dt= \int_0^T \Big[(\nabla\cdot(\hat p-\tilde{\hat{p}}),\,v)_{\Om_{PML}^+}+(\hat p-\tilde{\hat{p}},\,\nabla v)_{\Om_{PML}^+} \Big]\,dt.
\en
Now,  it follows that,  for  any $v\in X(0,T;\Om_{PML}^+)$, by the definition of $v^*$ and the initial condition $\hat p-\tilde{\hat{p}}|_{t=0} =0$,
\be \label{eq:con-7}
\begin{aligned}
\Big | \int_0^T (\hat p- \tilde{\hat p},\, \nabla v)_{\Om_{PML}^+}\, dt \Big | = &&(\hat p- \tilde{\hat p},\,\nabla v^*)_{\Om_{PML}^+}\Big|_{0}^T-\int_0^T (\pa_t(\hat p- \tilde{\hat p}),\, \nabla v^*)_{\Om_{PML}^+}\, dt \\
\leq && 2 \max_{0\leq t\leq T}\| \nabla v^*\|_{L^2(\Om_{PML}^+)} \int_0^T \| \pa_t(\hat p- \tilde{\hat p})\|_{L^2(\Om_{PML}^+)}\, dt.
\end{aligned}
\en
Combining (\ref{eq:con-5}), (\ref{eq:con-6}), (\ref{eq:con-7}) and using the Cauchy-Schwartz inequality, we have
\ben
\|(\hat p-\tilde{\hat p})\cdot \hat x\|_{Y(0,T;\Gamma_R^+)} \leq C \int_0^T  \| \nabla \cdot (\hat p- \tilde{\hat p})\|_{L^2(\Om_{PML}^+)}+ \| \pa_t(\hat p- \tilde{\hat p})\|_{L^2(\Om_{PML}^+)} \,dt.
\enn
This together with (\ref{eq:con-4}) leads to
\ben
&&\sup_{0\leq t\leq T}\Big( \| \om\|_{L^2(\Om_R^+)}^2  + \| \nabla \om^*\|_{L^2(\Om_R^+)}^2 \Big) \\
\leq && C \int_0^T  \| \nabla \cdot (\hat p- \tilde{\hat p})\|_{L^2(\Om_{PML}^+)}+ \| \pa_t(\hat p- \tilde{\hat p})\|_{L^2(\Om_{PML}^+)} \,dt.
\enn
Let $(\tilde{\hat u}, \tilde{\hat p}, \tilde{\hat u}^*, \tilde{\hat p}^*)$ be the PML extension of $(\hat u,  \hat p, \hat u^*, \hat p^*)$.  Then, $(\hat u-\tilde{\hat{u}}, \hat p- \tilde{\hat p}, \hat u^*-\tilde{\hat{u}}^*, \hat p^*- \tilde{\hat p}^*)$ satisfies the problem (\ref{eq:t-pml-l})  with $\xi=-\tilde{\hat u}|_{\Gamma_\rho^+}$. It follows by Theorem \ref{thm:au}  and  Cauchy-Schwartz inequality that
\be
\begin{aligned} \label{eq:con-8}
&\sup_{0\leq t\leq T}\Big( \| \om\|_{L^2(\Om_R^+)}^2  + \| \nabla \om^*\|_{L^2(\Om_R^+)}^2 \Big)\\
\leq& \quad  C (1+\sigma T)^2 T^{3/2} \left( \rho  \|\pa_t^2 \tilde{\hat{u}}\|_{L^2(0,T;H^{-1/2}(\Gamma_\rho^+))} + \rho^{-1} \|\pa_t \tilde{\hat{u}}\|_{L^2(0,T;H^{1/2}_0(\Gamma_\rho^+))} \right ).
\end{aligned}
\en
Now we estimate each term on the right hand side of the above inequality. Since $\tilde{\hat u}$ is the PML extension of $\hat u$ in the time-domain for $r>R$,  it can be written as
\ben
\tilde{\hat u}(r,\theta, t)
=\sum_{n=1}^{\infty} \left [ \mathscr{L}^{-1}\Big(\frac{K_n'(s\tilde{r})}{K_n(sR)}\Big)*\hat u_n(R,t)\right]\,\sin{n\theta},
\enn
where $\hat u_n(R,t)=\frac{2}{\pi}\int_0^{\pi} \hat  u(R,\theta,t)\sin n\theta \,d\theta$. Since $\hat u_n(R,0)=0$, we have
\ben
\pa_t \tilde{\hat u}=\sum_{n=1}^{\infty} \left [ \mathscr{L}^{-1}\Big(\frac{K_n'(s\tilde{r})}{K_n(sR)}\Big)*\pa_t \hat  u_n(R,t)\right]\,\sin{n\theta}.
\enn
Then, since $s\tilde{\rho}=s\rho+\hat\sigma\rho$, by Lemmas \ref{lem:pml-1} and   \ref{lem:pml-2}, we know that for any $s_1>0$
\ben
&&\|\pa_t \tilde {\hat u}\|_{L^2(0,T;H^{1/2}_0(\Gamma_\rho^+))}^2=\int_0^T \|\pa_t \tilde u  \|_{H^{1/2}_0(\Gamma_\rho^+))}^2\,dt \\
=&& \int_0^T\frac{\pi}{2} \rho \sum_{n=1}^{\infty} (1+n^2)^{\frac{1}{2}} \left [ \mathscr{L}^{-1}\left(\frac{K_n'(s\tilde{\rho})}{K_n(sR)}\right)*\pa_t \hat u_n(R,t)\right]^2  \,dt \\
=&& \frac{\pi}{2} \rho \sum_{n=1}^{\infty} (1+n^2)^{\frac{1}{2}} \Big \| \mathscr{L}^{-1}\left(\frac{K_n'(s\tilde{\rho})}{K_n(sR)}\right)*\pa_t \hat u_n(R,t)\Big \|^2_{L^2(0,T)}\\
\leq && \frac{\pi}{2} \rho\, e^{2s_1 T} \sum_{n=1}^{\infty} (1+n^2)^{\frac{1}{2}} \max_{-\infty<s_2<+\infty}\left|\frac{K_n'(s\tilde{\rho})}{K_n(sR)}\right|^2\| \pa_t \hat  u_n(R,t)\|^2_{L^2(0,T)}\\
\leq && \frac{\rho}{R}\, e^{2s_1 T}  \max_{-\infty<n<+\infty} \max_{-\infty<s_2<+\infty}\left|\frac{K_n'(s\tilde{\rho})}{K_n(sR)}\right|^2\| \pa_t \hat u\|^2_{L^2(0,T;H^{1/2}_0(\Gamma_R^+))}\\
\leq && \frac{\rho}{R}\, e^{2s_1 T} e^{-2\rho \hat \sigma(\rho)\left(1-\frac{R^2}{\rho^2}\right)}  \| \pa_t \hat u\|^2_{L^2(0,T;H^{1/2}_0(\Gamma_R^+))}.\\
\enn
This implies that
\be
\|\pa_t \tilde{ \hat u}\|_{L^2(0,T;H^{1/2}_0(\Gamma_\rho^+))} \leq Ce^{-\rho \hat \sigma(\rho)\left(1-\frac{R^2}{\rho^2}\right)}  \| \pa_t \hat u\|_{L^2(0,T;H^{1/2}_0(\Gamma_R^+))}. \label{eq:con-9}
\en
Analogously, we obtain
\be
\|\pa_t^2 \tilde {\hat u}\|_{L^2(0,T;H^{-1/2}_0(\Gamma_\rho^+))} \leq C e^{-\rho \hat \sigma(\rho)\left(1-\frac{R^2}{\rho^2}\right)}  \| \pa_t^2 \hat u\|_{L^2(0,T;H^{-1/2}(\Gamma_R^+))}. \label{eq:con-10}
\en
Combining (\ref{eq:con-8}) with (\ref{eq:con-9}) and (\ref{eq:con-9}), we complete the proof.

\end{proof}

\begin{rem}
Theorem \ref{convergence} illustrates that the exponential convergence of error between the PML solution and the original solution can be achieved by enlarging the absorbing parameter $\sigma$  or the thickness $\rho-R$ of the PML layer .
\end{rem}

\begin{rem}
We remark that the results in this paper can be easily extended to the Neumann boundary condition imposed on $\Gamma_0$. In the Neumann case, one should expand the solutions in terms of cosine functions in the Laplace domain and change correspondingly the solution spaces.  However, we don't know how to extend the approach to  the case of the impedance boundary condition.
\end{rem}

\section{Numerical implementation}

In this section, we will present two numerical examples to demonstrate the convergence of the PML method. The PML equations are discretized by the finite element method in space and finite difference in time. The computations are carried out by the software FreeFEM. 

 Multiply (\ref{eq:t-pml-a})-(\ref{eq:t-pml-c}) with test functions $v\in H^1_{0}(\Om_\rho^+)$, $q\in L^2(\Om_\rho^+)$, $v^*\in H^1_{0}(\Om_\rho^+)$, $q^*\in L^2(\Om_\rho^+)$, respectively. The weak formulation of system (\ref{eq:t-pml}) reads as follow: find  $u\in L^2(0,T;  H^1_{0}(\Om_\rho^+))$, $p\in L^2(0,T; L^2(\Om_\rho^+))$, $u^*\in L^2(0,T; L^2(\Om_\rho^+))$, $p^*\in L^2(0,T; L^2(\Om_\rho^+))$ such that
\begin{subequations}  \label{eq:t-pml-weak}
\begin{align}
&\left(\pa_t \tilde u,v\right)_{\Om_\rho^+}+((\sigma +\hat \sigma)\tilde u, v)_{\Om_\rho^+} +(\sigma \tilde u^{*}, v)_{\Om_\rho^+}- ( \tilde p, \nabla  v)_{\Om_\rho^+} =(f,v)_{\Om_\rho^+},  \\
&(\pa_t \tilde{p},q)_{\Om_\rho^+}+(\Lambda_1 \tilde p,q)_{\Om_\rho^+}-( \pa_t \tilde{p}^{*},q)_{\Om_\rho^+}-(\Lambda_2 \tilde p^{*},q)_{\Om_\rho^+}=0, \\
&(\pa_t \tilde{u}^{*}, v^{*})_{\Om_\rho^+}-(\sigma \tilde u, q^{*})_{\Om_\rho^+}=0,  \\
&(\pa_t \tilde{p}^{*},q^{*})_{\Om_\rho^+} + (\nabla \tilde u,  q^{*})_{\Om_\rho^+}=0,\\
&\tilde u =0 \quad \mbox{on}\; \pa D \cup \Gamma_0 \times (0,T),\\
&\tilde u =0 \quad \mbox{on}\;  \Gamma_\rho^+ \times (0,T),\\
&\tilde u|_{t=0} =\tilde p|_{t=0} =\tilde u^{*}|_{t=0}=\tilde p^{*}|_{t=0} =0\quad \mbox{in}\;  \Om_\rho^+.
\end{align}
\end{subequations}
Solutions of the weak form (\ref{eq:t-pml-weak}) will be numerically solved by an implicit finite difference in time and a finite element method in space.  Let $\{t_0, t_1,...,t_N\}$ be a partition of the time interval $[0, T]$ and $\delta_ t^n = t_{n+1}-t_n$ be the $n$-th time-step size. Let $\mathcal M_h$ be a regular triangulation of $\Om_\rho^+$. We assume the elements $K \in \mathcal M_h$ may have one curved edge align with $\Gamma_0\cup\Gamma_\rho^+$ so that $\Om_\rho^+ = \cup_{ K\in \mathcal M_h }K$. As usual, we shall use the most simple continuous finite elements in the computation. The solutions $u$, $p$, $u^{*}$ and $p^{*}$  will be approximated in  the  finite element space $P_1$ for piecewise linear functions, namely, the standard Taylor-Hood finite element for the velocity-pressure variables, satisfying the inf-sup condition. Denote by $P_0$ the finite element space for piecewise constant functions. 
Define the spaces $L_h$, $\tilde V_h$, $V_h$ and $W_h$ as
\ben
L_h=\{\sigma \in L^2 (\Om_{\rho}^+)\,|\,\forall K\in \mathcal T_h,\, \sigma_{|K}\in P_0\},\\
 \tilde V_h=\{u\in H^1_{\star}(\Om_{\rho}^+)\,|\,\forall K\in \mathcal T_h,\, u_{|K}\in P_1\},\\
 V_h=\{u\in H^1(\Om_{\rho}^+)\,|\,\forall K\in \mathcal T_h,\, u_{|K}\in P_1\},\\
 W_h=\{p \in L^2 (\Om_{\rho}^+)\,|\,\forall K\in \mathcal T_h,\, \sigma_{|K}\in P_1\}.
 \enn
Let $(u_h^n, p_h^n, u_h^{*n}, p_h^{*n}) \in \tilde V_h\times (W_h)^2\times V_h\times (W_h)^2$
 be an approximation of $u(t_n), p(t_n), u^{*}(t_n)$ and $p^{*}(t_n)$ at the time point $t_n$. The approximated solution at $t_{n+1}$, which we denote as  $(u_h^{n+1}, p_h^{n+1}, u_h^{*n+1}, p_h^{*n+1}) \in \tilde V_h\times (W_h)^2\times V_h\times (W_h)^2$, will be obtained by the following typical temporal scheme:
\begin{figure}[htb]
\centering
\includegraphics[scale=0.6]{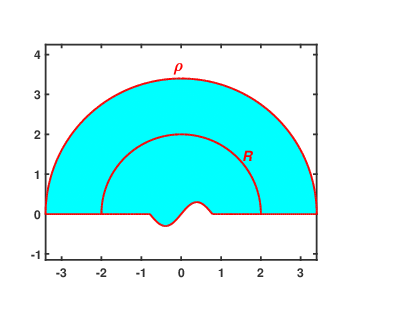}
\caption{Geometry of the computational domain where $\{x\in \Omega: R<|x|<\rho\}$ is the PML layer.
} \label{fig:mesh}
\end{figure}
\begin{subequations}
\begin{align*}
&\left( \frac{\tilde u_h^{n+1}-\tilde u_h^{n}}{\delta_t^n}
,v\right)_{\Om_\rho^+}+\left((\sigma +\hat \sigma)\tilde u_h^{n+1}, v\right)_{\Om_\rho^+} +\left(\sigma \tilde u_h^{*n+1}, v\right)_{\Om_\rho^+} - \left( \tilde p_h^{n+1}, \nabla  v\right)_{\Om_\rho^+} =\left(f^{n+1},v\right)_{\Om_\rho^+},  \\
&\left(\frac{\tilde{p}_h^{n+1}-\tilde{p}_h^{n}}{\delta_t^n},q\right)_{\Om_\rho^+}+\left(\Lambda_1 \tilde p_h^{n+1},q\right)_{\Om_\rho^+}-\left( \pa_t \tilde{p}^{*n+1}_h,q\right)_{\Om_\rho^+}-\left(\Lambda_2 \tilde p^{*n+1}_h,q\right)_{\Om_\rho^+}=0, \\
&\left( \frac{\tilde u_h^{*n+1}-\tilde u_h^{*n}}{\delta_t^n}, v^{*}\right)_{\Om_\rho^+}-\left(\sigma \tilde u_h^{n+1}, q^{*}\right)_{\Om_\rho^+}=0,  \\
&\left(\frac{\tilde{p}_h^{*n+1}-\tilde{p}_h^{*n}}{\delta_t^n},q^{*}\right)_{\Om_\rho^+} + \left( \nabla \tilde u_h^{n+1},  q^{*}\right)_{\Om_\rho^+}=0,
\end{align*}
\end{subequations}
where $f^{n+1}=f(t_{n+1})$ and the Dirichlet boundary condition is imposed on $\partial \Omega_\rho^+$.

In the following numerical examples, we suppose $D= \emptyset $. The local rough surface is given by 
\ben
h(x)= \left\{\begin{array}{lll}
0,  && x\in(-\infty, -\frac{\pi}{4}), \\
0.3\sin(4x), && x\in[ -\frac{\pi}{4}, \frac{\pi}{4}],\\
0,  && x\in(\frac{\pi}{4}, \infty).
\end{array}\right.
\enn

$\textbf{Example 1}$ We consider  a time harmonic source term over the local rough surface. In the computation, we take $R=2$, $\rho=3$ and set $\sigma=\hat\sigma=10$. A mesh of $9245$ vertices, $510$ edges and $18128$ triangles is adopted and the terminal time is set at $t=8$. The time harmonic source is supposed to be given by
\ben
f(x,t):=\frac{e^{\frac{-|x-x_0|^2}{2\eta}}}{\sqrt{2\pi}\;\eta}\, \sin(2t),\; \eta=0.1,
\enn

\begin{figure}[htb]
\centering
\subfigure[$t=2$]{
\includegraphics[scale=0.2]{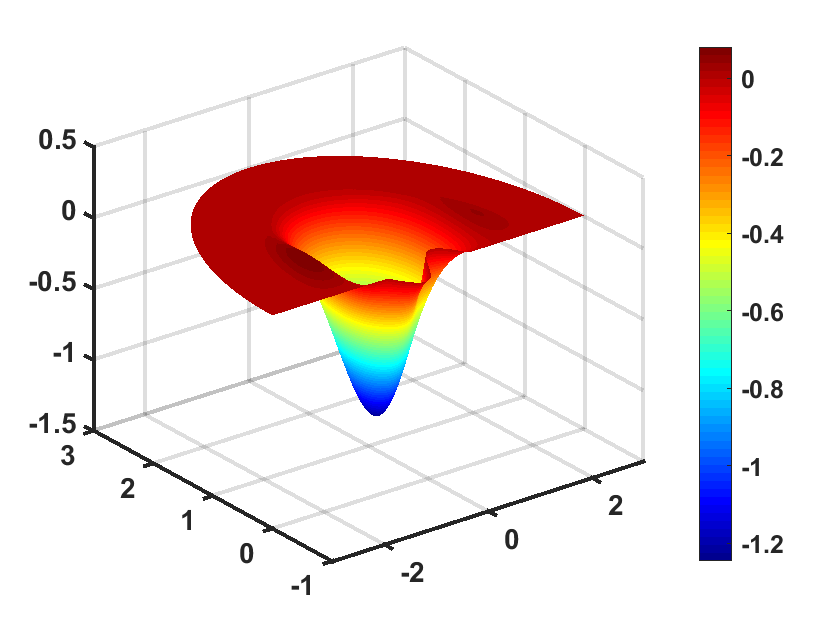}
}
\subfigure[$t=4$ ]{
\includegraphics[scale=0.2]{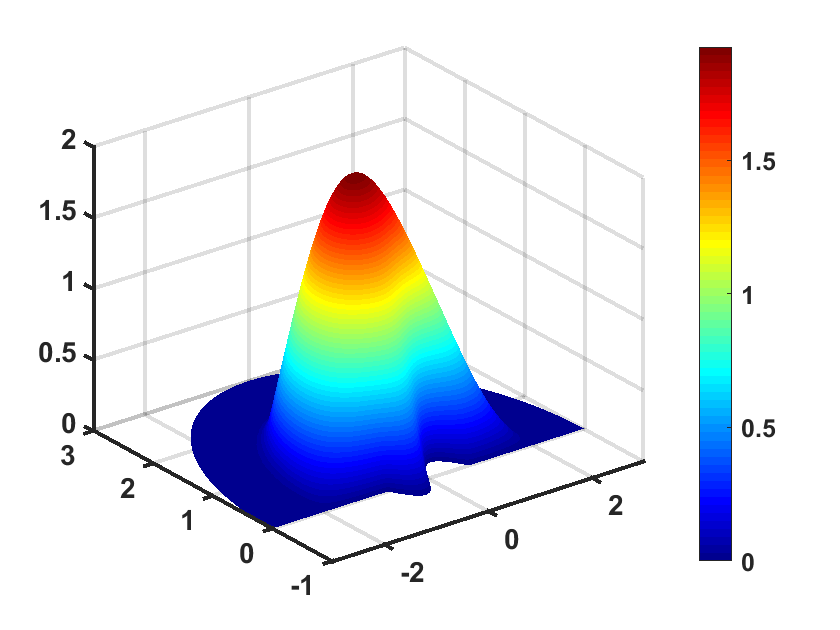}
}
\subfigure[$t=8$]{
\includegraphics[scale=0.2]{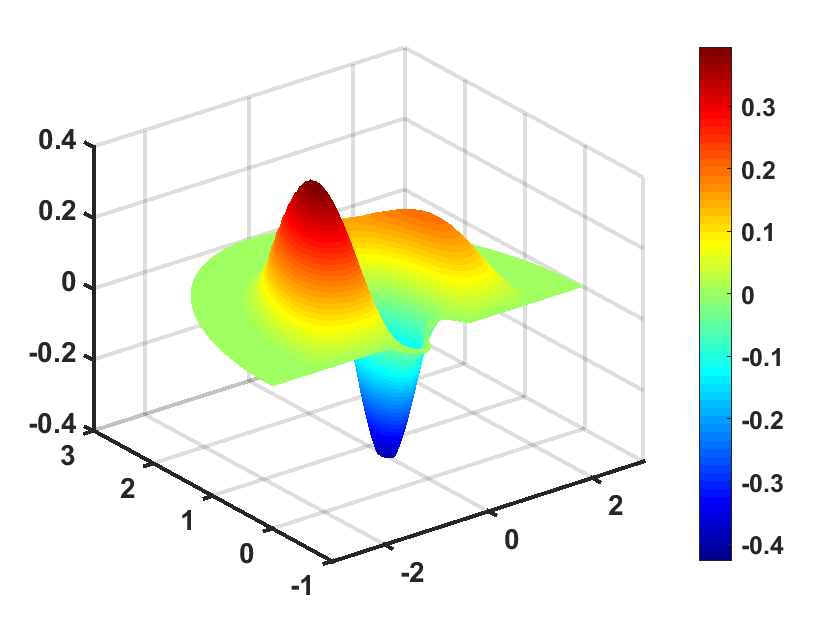}

}
\caption{Numerical solutions excited by  a point source over a  local rough surface at time  $t=2,4,8$, respectively.} \label{fig:sint}
\end{figure}

\noindent where the excitation frequency is $\omega=2$ and the location  of source is at $x_0=(0,0.5)$.  In  Figure $\ref{fig:sint}$, we show the numerical solution at $t=2,4,8$, respectively. It is observed that the waves are almost attenuated in the PML layer without reflections from the interface between the physical domain and PML layer.

To validate convergence of the PML method, we compute the relative error
\ben
E_{rel}:=\frac{\| u^{num}-u^{exa}  \|_{L^{\infty}(0,T;L^\infty(\Om_\rho^+))}}{\|u^{exa}\|_{L^{\infty}(0,T;L^\infty(\Om_\rho^+))}},
\enn
where $u^{exa}$ represents the numerical solution with relatively larger absorbing parameters $\sigma$, $\hat \sigma$ and with a larger thickness $\rho-R$ of the PML layer. Note that analytical solutions are not available in general.

\begin{figure}[htb]
\centering
\includegraphics[scale=0.5]{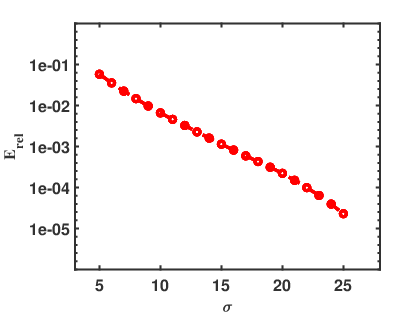}
\caption{ Relative error $E_{rel}$  versus  PML absorbing parameter $\sigma$   for fixed PML thickness $\rho-R=1$. The vertical axis is logarithmically scaled.
} \label{fig:sigma}
\end{figure}

\begin{figure}[htb]
\centering
\includegraphics[scale=0.5]{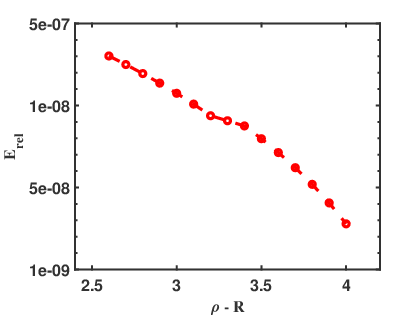}
\caption{ Relative error $E_{rel}$  versus  PML thickness  $\rho-R$ for fixed PML absorbing parameter $\sigma=25$.  The vertical axis is logarithmically scaled.
} \label{fig:thick-sin2t}
\end{figure}


Figures \ref{fig:sigma} and \ref{fig:thick-sin2t} show the decaying behavior of  the relative error $E_{rel}$  as the  PML absorbing parameter  $\sigma$ or the PML thickness $\rho-R$ increases. In Figure \ref{fig:sigma} we take the PML absorbing parameter $\sigma$ varying between $5$  and $25$ and fix  the PML layer thickness at $\rho -R=1$.  Since the vertical axis is logarithmically scaled,   the  dashed lines indicate that the relative error $E_{rel}$ decays exponentially as $\sigma$ increases for a fixed layer thickness. In Figure $\ref{fig:thick-sin2t}$ we display the relative error versus PML thickness $\rho-R$ for fixed absorbing $\sigma=25$. We take the PML thickness $\rho-R$ ranging from $2.6$ to $4$. It is obvious that relative error $E_{rel}$ decays exponentially as $\rho-R$ increases for a fixed absorbing parameter.

$\textbf{Example 2}$ In this example, we consider a non-harmonic source term, that is not compactly supported with respect to the time-variable. We take $R=2$, $\rho=3.4$ and set $\sigma=\hat \sigma =25$.  A mesh of $11502$ vertices,  $553$ edges and $22599$ triangles is applied and the final time is set at $t=10$.  The source is of the form
\ben
f(x,t):=\frac{e^{\frac{-|x-x_0|^2}{2\eta}}}{\sqrt{2\pi}\;\eta}\, t,\; \eta=0.1,
\enn
which is located at $(0,0.5)$.  We present in Figure \ref{fig:t} the numerical solutions at time $t=3, 7, 10$, respectively. It shows that the source is excited all the time and rare reflection occurs at the interface $\Gamma_R^+$.

\begin{figure}[htb]
\centering
\subfigure[$t=3$]{
\includegraphics[scale=0.2]{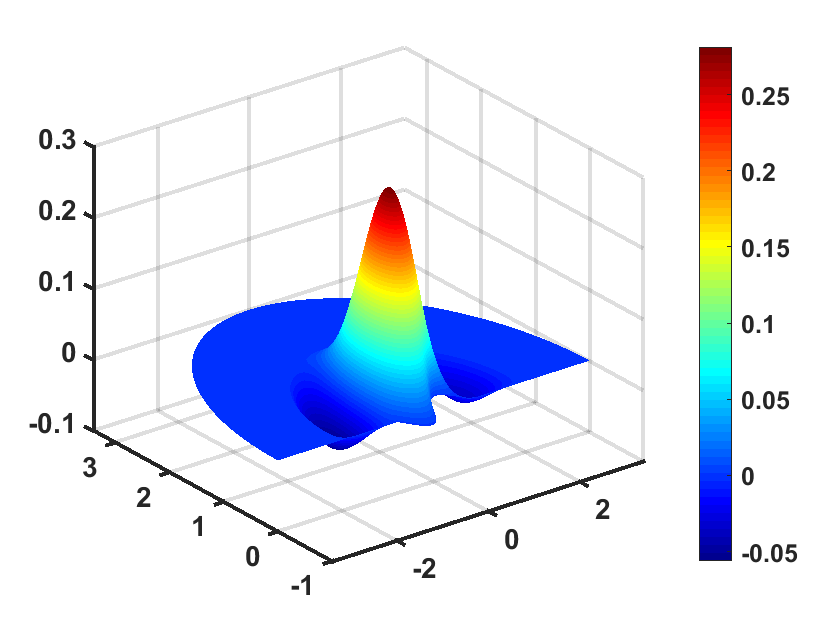}
}
\subfigure[$t=7$ ]{
\includegraphics[scale=0.2]{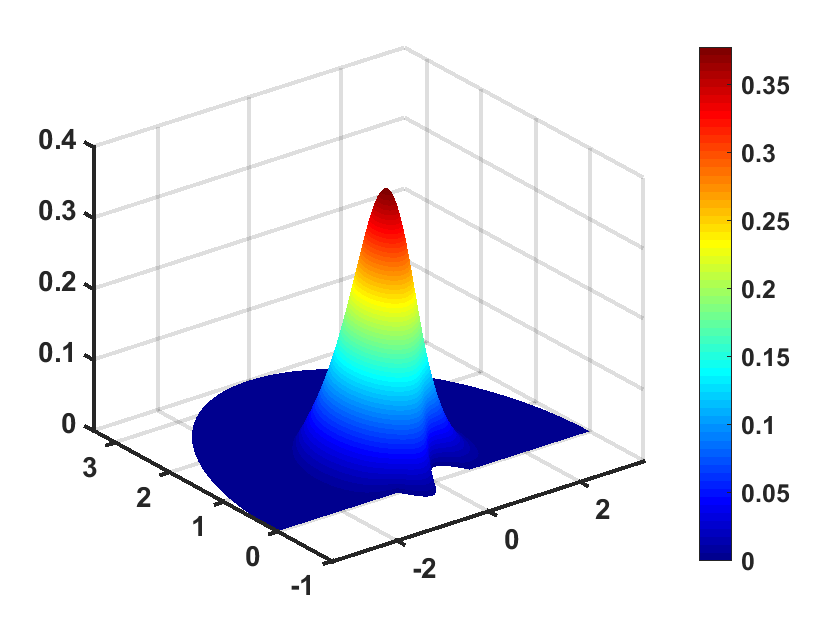}
}
\subfigure[$t=10$]{
\includegraphics[scale=0.2]{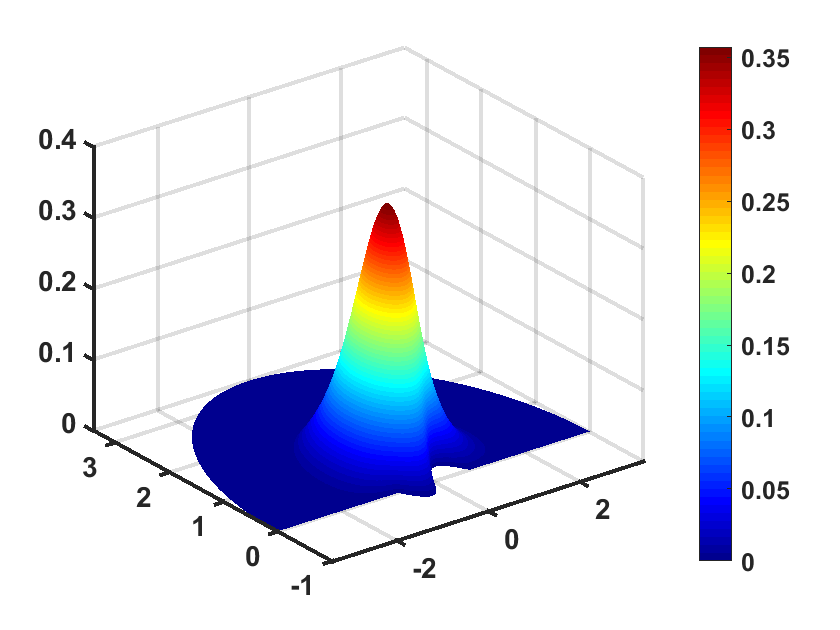}

}
\caption{Numerical solutions for a point source over a  local rough surface at time  $t=3,7,10$, respectively.} \label{fig:t}
\end{figure}

\begin{figure}[htb]
\centering
\includegraphics[scale=0.5]{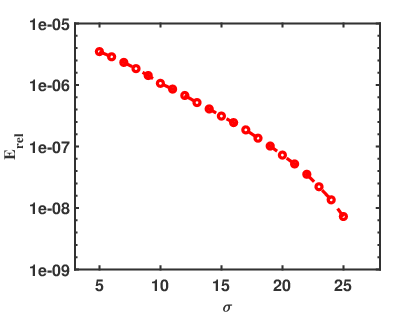}
\caption{ Relative error $E_{rel}$  versus  PML absorbing parameter $\sigma$   for fixed PML thickness $\rho-R=1.4$. The vertical axis is logarithmically scaled.
} \label{fig:sigma-t}
\end{figure}
\begin{figure}[htb]
\centering
\includegraphics[scale=0.5]{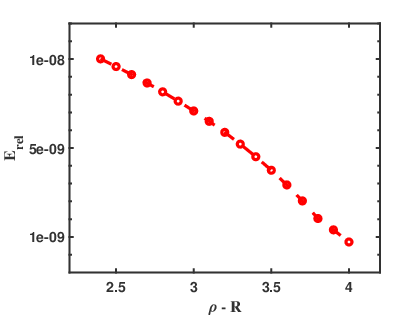}
\caption{ Relative error $E_{rel}$  versus  PML thickness  $\rho-R$ for fixed PML absorbing parameter $\sigma=25$. The vertical axis is logarithmically scaled.
} \label{fig:thick-t}
\end{figure}

Figures \ref{fig:sigma-t} and \ref{fig:thick-t} show the convergence of the relative error $E_{rel}$ versus one of the two PML parameters $\sigma$ and $\rho-R$. In Figure \ref{fig:sigma-t}, we present the relative error $E_{rel}$ versus the PML absorbing parameter $\sigma$ changing from $5$ to $25$ for the fixed PML thickness $\rho-R=1.4$. As in \textbf{Example 1}, we observe that $E_{rel}$ decays exponentially as $\sigma$ increases. In Figure \ref{fig:thick-t} we display the relative error versus the PML thickness $\rho-R$ for a fixed absorbing $\sigma=25$. We take the PML thickness $\rho-R$ changing from $2.4$ to $4$. It is obvious that the relative error $E_{rel}$ decays exponentially as $\rho-R$ increases.

\section{Conclusion}

In this paper we study the PML-method to  the time domain acoustic  scattering problem  over a locally rough surface. We proved well-posedness of the scattering problem  and the PML formulation. The long time stability of the PML formulations is obtained by the energy method. The exponential convergence of  the PML solution is proved and it can be realized by either enlarging the PML absorbing parameter or the thickness of the layer. The convergence results are verified by two numerical examples.

\newpage

\begin{appendices}
\section{Laplace transform}
For any $s=s_1+is_2$ with $s_1>0$, $s_2\in \R$, we define the Laplace transform of $u$ as
\ben
u_L(s)=\mathscr{L}(u)(s):=\int_0^\infty e^{-st}u(t)\, dt.
\enn
Some properties of the Laplace transform and its inversion are listed  as follows:
\be
\mathscr{L}(\frac{du}{dt})&=&s u_L-u(0),\\
\mathscr{L}(\frac{d^2u}{dt^2})&=&s^2 u_L- su(0) -\frac{du}{dt}(0),\\
\int_0^t u(\tau)\, d\tau&=&\mathscr{L}^{-1}(s^{-1}u_L(s)). \label{eq:l-3}
\en
It can be verified from the inverse Laplace transform that
\ben
u(t)=\mathscr{F}^{-1}(e^{s_1t} \mathscr{L}(u)(s_1+is_2)).
\enn
where $\mathscr{F}^{-1}$ denotes the inverse Fourier transform with respect to $s_2$.

Below is the Parseval or Plancherel identity for the Laplace transform  (see \cite[equation (2.46)]{Cohen})
\begin{lem}
\textnormal{(Parseval or Plancherel Identity)} If $u_L=\mathscr{L}(u)$ and $v_L=\mathscr{L}(v)$,then
\be \label{PI}
\frac{1}{2\pi}\int_{-\infty}^{+\infty} u_L(s)\,v_L(s) \,ds_2=\int_0^{\infty} e^{-2s_1 t} u(t)\,v(t)\,dt,
\en
for all $s_1>s_0$, where $s_0$ is abscissa of convergence for Laplace transform of $u$ and $v$.
\end{lem}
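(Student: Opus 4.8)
The plan is to deduce \eqref{PI} from the classical Plancherel theorem for the Fourier transform, by exponentially damping the causal functions $u$ and $v$ and then identifying their Fourier transforms with the line values of the Laplace transforms. This is the standard route, and it keeps the whole argument at the level of $L^2$ Fourier theory rather than contour integration.

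First I would introduce the damped, causal auxiliary functions
\ben
g(t):=e^{-s_1 t}u(t)\,H(t),\qquad h(t):=e^{-s_1 t}v(t)\,H(t),
\enn
extended by zero to $t\le 0$ through the Heaviside function $H$. The crucial observation is that their Fourier transforms in $t$, evaluated at the frequency $s_2$, coincide with the Laplace transforms on the vertical line $\real(s)=s_1$: with the convention $\hat g(s_2)=\int_{-\infty}^{+\infty}e^{-is_2 t}g(t)\,dt$, the definition of $\mathscr L$ gives directly
\ben
\hat g(s_2)=\int_0^{\infty}e^{-(s_1+is_2)t}u(t)\,dt=u_L(s),\qquad \hat h(s_2)=v_L(s),\qquad s=s_1+is_2.
\enn

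Next I would invoke the polarized Plancherel identity $\int_{-\infty}^{+\infty}g(t)\overline{h(t)}\,dt=\tfrac{1}{2\pi}\int_{-\infty}^{+\infty}\hat g(s_2)\overline{\hat h(s_2)}\,ds_2$. Expanding $g\,\overline h=e^{-2s_1 t}u\,\overline v$ on the left (using $s_1\in\R$) and substituting the two identifications above on the right yields
\ben
\int_0^{\infty}e^{-2s_1 t}u(t)\overline{v(t)}\,dt=\frac{1}{2\pi}\int_{-\infty}^{+\infty}u_L(s)\overline{v_L(s)}\,ds_2,
\enn
which is exactly \eqref{PI}; the conjugation on the second factor is the one carried by the sesquilinear pairing $\langle\cdot,\cdot\rangle$ at every place the identity is actually invoked in the paper (e.g.\ in Lemmas \ref{lem:t} and \ref{lem:wave-s}), so the statement is used in this conjugated form.

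The step requiring genuine care is the applicability of Plancherel, namely the verification that $g,h\in L^2(\R)$, and this is precisely where the hypothesis $s_1>s_0$ enters. By the definition of the abscissa of convergence $s_0$, the integrals defining $u_L(s)$ and $v_L(s)$ converge for $\real(s)>s_0$; choosing $s_1>s_0$ forces the damping factor $e^{-s_1 t}$ to dominate the growth of $u$ and $v$, so that $e^{-s_1 t}u(t)$ and $e^{-s_1 t}v(t)$ are absolutely and square integrable on $(0,\infty)$. Thus $g,h\in L^1(\R)\cap L^2(\R)$, their Fourier transforms are classically well defined and equal to $u_L,v_L$ on $\real(s)=s_1$, and the Fourier-Plancherel theorem applies directly with no density or approximation argument needed. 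I would close by noting that only the line values of $u_L,v_L$ on $\real(s)=s_1$ are used—no holomorphy in a half-plane is required here—so the identity holds verbatim for every fixed $s_1>s_0$.
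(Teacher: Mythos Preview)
The paper does not actually prove this lemma; it is stated in Appendix~A with a bare citation to \cite[equation (2.46)]{Cohen}. Your route---damp by $e^{-s_1 t}$, identify the Laplace transform on the vertical line $\real(s)=s_1$ with the Fourier transform of the damped causal function, then apply the polarized Fourier--Plancherel identity---is the standard derivation and is correct. Your observation that the identity is used throughout the paper in the conjugated form $u_L\overline{v_L}$ (e.g.\ in Lemmas~\ref{lem:t} and~\ref{lem:wave-s}) rather than the bilinear form literally written in \eqref{PI} is also well taken.

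One small technical caution: the sentence ``choosing $s_1>s_0$ forces the damping factor $e^{-s_1 t}$ to dominate the growth of $u$ and $v$, so that $e^{-s_1 t}u(t)$ and $e^{-s_1 t}v(t)$ are absolutely and square integrable'' slightly overstates what the abscissa of \emph{convergence} delivers. Convergence of $\int_0^\infty e^{-st}u(t)\,dt$ for $\real(s)>s_0$ may be conditional and does not by itself force $e^{-s_1 t}u\in L^1\cap L^2$. To make the Plancherel step airtight you want either the abscissa of absolute convergence, or a mild extra hypothesis such as local integrability plus at-most-exponential growth (which gives $e^{-\sigma t}u\in L^1$ for $\sigma$ strictly between $s_0$ and $s_1$, and then $e^{-s_1 t}u=e^{-(s_1-\sigma)t}\cdot e^{-\sigma t}u\in L^2$ by Cauchy--Schwarz against the $L^\infty$ factor $e^{-(s_1-\sigma)t}$ on $[0,\infty)$, since an $L^1$ function times a bounded decaying exponential is in $L^2$ once one interpolates via $L^1\cap L^\infty$ on tails). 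In every application in the paper the damped functions are manifestly in $L^2(0,\infty)$, so this does not affect any downstream argument.
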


The following lemma refers to \cite[Theorem 43.1]{Treves}.
\begin{lem} \label{lem:a}
let $\breve{\omega}(s)$ denote a holomorphic function in the half complex plane $\textnormal{Re} (s)>\sig_0$ for some $\sigma_0\in\R$, valued in the Banach space $\E$. The following conditions are equivelent:
\item[(1)] there is a distribution $\om \in \mathcal{D}_+^{'}$ whose Laplace transform is equal to  $\breve\omega(s)$, where $\mathcal{D}^{'}_+(\E)$ is the space of distributions on the real line which vanish identically in the open negative half-line;
\item[(2)] there is a $\sig_1$ with $\sig_0\leq\sig_1<\infty$ and an integer $m\geq0$ such that for all complex numbers $s$ with $\textnormal{Re} (s)>\sig_1$  it holds that $\|\breve{\omega}(s)\|_{\E}\leq C(1+|s|)^m$.
\end{lem}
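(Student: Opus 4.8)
The plan is to prove the two implications separately. The direction $(1)\Rightarrow(2)$ is the structural (easy) one and follows from the order structure of tempered distributions, whereas $(2)\Rightarrow(1)$ requires reconstructing a distribution from its prescribed Laplace transform and is where the substantive work lies. Throughout I would regard $\breve\omega$ as $\E$-valued and use the distributional Laplace transform on $\mathcal D_+^{'}(\E)$.

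For $(1)\Rightarrow(2)$, I would start from the fact that $\breve\omega=\mathscr L(\omega)$ being holomorphic on $\textnormal{Re}(s)>\sig_0$ forces $e^{-\sigma_1 t}\omega$ to be a tempered $\E$-valued distribution supported in $[0,\infty)$ for every $\sigma_1>\sig_0$. By the structure theorem for tempered distributions, I can write $e^{-\sigma_1 t}\omega=\sum_{j=0}^{m}\pa_t^{\,j}F_j$ for finitely many continuous $\E$-valued functions $F_j$ of at most polynomial growth that vanish for $t<0$. Pairing with $e^{-(s-\sigma_1)t}$ and integrating by parts then gives $\breve\omega(s)=\sum_{j=0}^{m}(s-\sigma_1)^{j}\int_0^\infty F_j(t)\,e^{-(s-\sigma_1)t}\,dt$, and for $\textnormal{Re}(s)$ bounded away from $\sigma_1$ each integral is uniformly bounded in $\E$ while the prefactors contribute the polynomial factor $(1+|s|)^m$. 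This yields the bound in (2) after possibly enlarging $\sigma_1$.

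For $(2)\Rightarrow(1)$, the idea is to invert the Laplace transform but to first regularize away the polynomial growth. Fixing $s_0$ with $\textnormal{Re}(s_0)>\sig_1$, the function $\breve\omega(s)/(s-s_0)^{m+2}$ decays like $|s|^{-2}$ on vertical lines, so the Bromwich integral
\ben
g(t):=\frac{1}{2\pi i}\int_{s_1-i\infty}^{\,s_1+i\infty} e^{st}\,\frac{\breve\omega(s)}{(s-s_0)^{m+2}}\,ds
\enn
converges absolutely in $\E$ and defines a continuous $\E$-valued function. Using holomorphy of $\breve\omega$ together with the polynomial bound, Cauchy's theorem shows that $g$ is independent of the abscissa $s_1>\sig_1$; pushing $s_1\to+\infty$ and using $|e^{st}|=e^{s_1 t}\to0$ for $t<0$ together with the uniform integrability of the regularized integrand shows $g(t)=0$ for $t<0$, so $g\in\mathcal D_+^{'}(\E)$ with $\mathscr L(g)(s)=\breve\omega(s)/(s-s_0)^{m+2}$. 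I would then set $\omega:=(\pa_t-s_0)^{m+2}g$; since $g\in\mathcal D_+^{'}(\E)$, each distributional derivative keeps the support in $[0,\infty)$ and obeys $\mathscr L(\pa_t\,\cdot\,)=s\,\mathscr L(\cdot)$ with no boundary term, so iterating $\mathscr L((\pa_t-s_0)\,\cdot\,)=(s-s_0)\mathscr L(\cdot)$ a total of $m+2$ times gives $\mathscr L(\omega)=\breve\omega$, as required.

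The main obstacle is precisely the support statement in $(2)\Rightarrow(1)$: proving that the reconstructed $g$ vanishes identically for $t<0$. This is exactly where holomorphy on the \emph{whole} right half-plane, rather than merely the polynomial bound on one vertical line, is essential, and it is carried out by the contour-shift argument $s_1\to+\infty$, for which the regularizing factor $(s-s_0)^{-(m+2)}$ is indispensable to keep the vertical-line integrals absolutely convergent and uniformly controlled as the contour moves. Verifying the independence of $g$ from $s_1$ and the iterated operator identity when applying $(\pa_t-s_0)^{m+2}$ are then routine.
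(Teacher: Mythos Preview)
The paper does not prove this lemma at all; it merely quotes it from Tr\`eves, \emph{Basic Linear Partial Differential Equations}, Theorem~43.1, and uses it as a black box. So there is no ``paper's own proof'' to compare against.

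Your argument is essentially the classical one that appears in Tr\`eves. Both directions are correctly outlined: for $(1)\Rightarrow(2)$ you invoke the structure theorem for tempered distributions supported in $[0,\infty)$ and read off polynomial growth from the derivative order; for $(2)\Rightarrow(1)$ you regularize by $(s-s_0)^{-(m+2)}$ so that the Bromwich integral converges absolutely, use a contour-shift $s_1\to+\infty$ to force the inverse transform to vanish on $t<0$, and then undo the regularization by applying $(\pa_t-s_0)^{m+2}$ distributionally. The only places that deserve a little more care in a written-up version are (i) justifying that the $F_j$ in the structure-theorem representation can indeed be taken to vanish for $t<0$ (this follows by taking primitives from $0$, but should be stated), and (ii) in the contour shift, checking that the vertical-line integral stays bounded uniformly in $s_1$---a scaling $s_2=s_1\tau$ shows it is in fact $O(1/s_1)$, so the product with $e^{s_1 t}$ for $t<0$ tends to zero. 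With those minor clarifications, the proof is sound and matches the source the paper cites.
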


\section{Sobolev spaces}

For a bounded domain $D$ with Lipschitz continuous boundary $\partial D$, define the Sobolev space
\ben
H(\mbox{div},\,D):=\{u\in L^2(D): \nabla \cdot u\in L^2(D)\}.
\enn
which is a Hilbert space with the norm
\ben
\|u\|_{H(\mbox{div},\,D)}=\left (\|u\|^2_{L^2 (D)}+\|\nabla\cdot u\|^2 _{L^2(D)}\right)^{1/2}.
\enn

\n The first Green formula takes the form
\be
(\nabla\cdot u,\, v)_{D}+(u,\, \nabla v)_{D}=\langle u\cdot n,\, v \rangle_{\partial D} \quad \mbox{for all}\quad u, v \in H(\mbox{div},\,D)
\en
where $(\cdot, \cdot)_{D}$ and $\langle \cdot ,\cdot \rangle_{\partial D}$ denote the $L^2$-inner product on $D$ and the dual pairing product between $H^{-1/2}(\pa D)$ and $H^{1/2}(\pa D)$, respectively.

Let $\Gamma_R^+$  be defined as in Section 2.  For any $u\in C^{\infty}_0(\Gamma_R^+)$, we have the Fourier series expansion
\ben
u(R,\theta)=\sum_{n=1}^{\infty} a_n \sin n \theta, \quad a_n=\frac{2}{\pi}\int_0^\pi u(R,\theta) \sin n \theta\, d\theta.
\enn
Define the trace function space $H^p_0(\Gamma_R^+)$ as
\ben
H^p_0(\Gamma_R^+):=\left \{u\in L^2(\Gamma_R^+): \|u\|_{H^p(\Gamma_R^+)}:= \left(\sum_{n=1}^{\infty}(1+n^2)^p|a_n|^2\right)^{1/2}<+\infty. \right\}.
\enn

\section{Modified Bessel functions}

We look for solutions to the  Helmholz equation
\ben
\Delta u(x)-s^2u(x)=0,
\enn
in the form of
\ben
u(x)=y(sr)e^{in\theta}, \quad n=0, \pm1, \pm2, \cdots,
\enn
where $(r,\theta)$ are cylindrical coordinates. It is obvious that $y(r)$ is a solution of the ordinary equation
\ben
\frac{d^2y}{dr^2}+\frac{1}{r}\frac{dy}{dr}-\left(1+\frac{n^2}{r^2}\right)y=0.
\enn
The modified Bessel functions of order $\nu$, which we denote by $K_{\nu}(z)$, are solutions to the differential equation
\ben
z^2 \frac{d^2y}{dz^2}+z\frac{dy}{dz}-(z^2+\nu^2)y=0.
\enn
$K_\nu(z)$ satisfies the following asymptotic behavior as $|z|\rightarrow\infty$
\ben
K_\nu(z)\sim\left( \frac{\pi}{2z} \right)^{1/2}e^{-z}.
\enn

The following estimates for the modified Bessel function $K_\nu(z)$  have been proved in \cite[Lemma 2.10 and 5.1]{Chen09} .
\begin{lem} \label{lem:mbf-1}
Let $s=s_1+i s_2$ with $s_1>0$, $s_2\in\R$. It holds that
\ben
-\textnormal{Re}\Big(\frac{K_n^{'}(sR)} {K_n(sR)}\Big)\geq0.
\enn
\end{lem}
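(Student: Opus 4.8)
The plan is to realize the logarithmic derivative $K_n'(sR)/K_n(sR)$ as the Dirichlet-to-Neumann quotient of the exponentially decaying radial solution and to read off the sign from an energy identity on the half-line. Concretely, I would set $v(r):=K_n(sr)$ for $r\geq R$. Substituting $z=sr$ into the modified Bessel equation recalled in Appendix C shows that $v$ solves the Sturm--Liouville equation $(rv')'-\big(s^2r+n^2/r\big)v=0$ on $(R,\infty)$. Because $\textnormal{Re}(sr)=s_1 r\to+\infty$ and $K_n(z)\sim(\pi/2z)^{1/2}e^{-z}$ as $|z|\to\infty$ (together with the analogous decay of $K_n'$), both $v$ and $v'$ are $O(r^{-1/2}e^{-s_1 r})$, so all the integrals below converge and the boundary contribution at infinity vanishes. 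Note also that $K_n(sR)\neq0$, since $K_n$ has no zeros in the right half-plane $\textnormal{Re}(z)>0$; this legitimizes the quotient.

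Next I would multiply the Sturm--Liouville equation by $\overline{v}$ and integrate over $(R,\infty)$. Integrating the principal term by parts and discarding the vanishing boundary term at infinity gives
\ben
-R\,v'(R)\,\overline{v(R)}&=&\int_R^\infty r|v'|^2\,dr+s^2\int_R^\infty r|v|^2\,dr+n^2\int_R^\infty \frac{|v|^2}{r}\,dr.
\enn
Writing $A:=\int_R^\infty r|v'|^2\,dr$, $B:=\int_R^\infty r|v|^2\,dr$ and $D:=n^2\int_R^\infty |v|^2/r\,dr$, each of these is real and nonnegative, so the identity reads $-R\,v'(R)\,\overline{v(R)}=A+s^2B+D$.

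Since $v'(R)=sK_n'(sR)$ and $v(R)=K_n(sR)$, dividing by $s\,|v(R)|^2$ yields
\ben
\frac{K_n'(sR)}{K_n(sR)}=\frac{v'(R)\,\overline{v(R)}}{s\,|v(R)|^2}&=&-\frac{1}{R\,|K_n(sR)|^2}\Big(\frac{A+D}{s}+sB\Big).
\enn
Taking real parts and using $\textnormal{Re}(s)=s_1>0$ together with $\textnormal{Re}(1/s)=s_1/|s|^2>0$, every term on the right is a nonnegative multiple of a nonnegative quantity, whence $\textnormal{Re}\big(K_n'(sR)/K_n(sR)\big)\leq0$, i.e. $-\textnormal{Re}\big(K_n'(sR)/K_n(sR)\big)\geq0$, which is the claim.

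The only delicate point is the justification of the integration by parts over the half-line: the convergence of $A$, $B$, $D$ and the vanishing of $r\,v'\,\overline{v}$ as $r\to\infty$. This is precisely where the hypothesis $s_1>0$ is used, through the exponential decay $K_n(sr),\,K_n'(sr)=O(r^{-1/2}e^{-s_1 r})$ supplied by the asymptotics. Every remaining step is either the elementary Sturm--Liouville rewriting or a nonnegativity observation, so I expect this convergence/boundary-term verification to be the main (though essentially routine) obstacle.
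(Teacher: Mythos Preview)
Your proof is correct. The paper itself does not supply a proof of this lemma; in Appendix~C it is simply stated and attributed to \cite[Lemma~2.10]{Chen09}. Your energy-identity argument---writing $v(r)=K_n(sr)$ as a solution of $(rv')'-(s^2r+n^2/r)v=0$, integrating against $\overline{v}$ over $(R,\infty)$, and reading off the sign of $\textnormal{Re}\big((A+D)/s+sB\big)$ from $s_1>0$---is the standard route to this inequality and is self-contained. The technical point you isolate (decay of $v,v'$ and vanishing of the boundary term at infinity, guaranteed by $K_n(sr),K_n'(sr)=O(r^{-1/2}e^{-s_1 r})$) is indeed the only thing requiring care, and your justification via the asymptotics $K_n(z)\sim(\pi/2z)^{1/2}e^{-z}$ together with the nonvanishing of $K_n$ in the right half-plane is adequate.
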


\begin{lem} \label{lem:pml-1}
 Suppose $\nu \in \R$, $s=s_1+i s_2$ with $s_1>0$, $s_2\in\R$,  $\rho_1>\rho_2>0$ and $\tau>0$. It holds that
\ben
\frac{|K_{\nu}(s\rho_1+\tau)|}{|K_{\nu}(s\rho_2)|}\leq e^{-\tau (1-\rho_2^2/\rho_1^2)}.
\enn
\end{lem}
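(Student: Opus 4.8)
The plan is to argue from the integral representation
$$K_{\nu}(z)=\frac12\int_{0}^{\infty}\exp\!\Bigl(-\frac z2\bigl(\sigma+\sigma^{-1}\bigr)\Bigr)\,\sigma^{-\nu-1}\,d\sigma ,$$
valid for every $z$ with $\textnormal{Re}(z)>0$, where for $z$ off the positive real axis the path of integration is understood to be deformed to a curve from $0$ to $\infty$ along which the integrand is absolutely integrable (possible because $|\arg z|<\pi/2$, and legitimate by Cauchy's theorem in the cut $\sigma$-plane). Since $\textnormal{Re}(s\rho_{1}+\tau)=s_{1}\rho_{1}+\tau>0$ and $\textnormal{Re}(s\rho_{2})=s_{1}\rho_{2}>0$, the representation applies to both $z=s\rho_{1}+\tau$ and $z=s\rho_{2}$. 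This is the route of \cite[Lemma~5.1]{Chen09}, and I would follow its argument.

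The key observation is that $s\rho_{1}+\tau=s\rho_{2}+\bigl(s(\rho_{1}-\rho_{2})+\tau\bigr)$, so that, over any admissible contour $\Gamma$ shared by the two representations,
$$K_{\nu}(s\rho_{1}+\tau)=\frac12\int_{\Gamma}\exp\!\Bigl(-\tfrac{s(\rho_{1}-\rho_{2})+\tau}{2}\bigl(\sigma+\sigma^{-1}\bigr)\Bigr)\,\exp\!\Bigl(-\tfrac{s\rho_{2}}{2}\bigl(\sigma+\sigma^{-1}\bigr)\Bigr)\,\sigma^{-\nu-1}\,d\sigma .$$
I would choose $\Gamma$ to be the steepest-descent curve through the saddle $\sigma=1$ for the factor $\exp(-\tfrac{s\rho_{2}}{2}(\sigma+\sigma^{-1}))$, i.e. the curve on which $\textnormal{Im}\bigl(\tfrac{s\rho_{2}}{2}(\sigma+\sigma^{-1})\bigr)\equiv\textnormal{Im}(s\rho_{2})$ while $\textnormal{Re}\bigl(\tfrac{s\rho_{2}}{2}(\sigma+\sigma^{-1})\bigr)$ increases from its minimum $s_{1}\rho_{2}$ at $\sigma=1$. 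On such a $\Gamma$ one has $\sigma+\sigma^{-1}=\tfrac{R_{0}}{s\rho_{2}}+\tfrac{2is_{2}}{s}$ with $R_{0}\ge 2s_{1}\rho_{2}$, and a short computation that uses $s_{1}^{2}+s_{2}^{2}=|s|^{2}$ then gives $\textnormal{Re}\bigl(\tfrac{s(\rho_{1}-\rho_{2})+\tau}{2}(\sigma+\sigma^{-1})\bigr)\ge s_{1}(\rho_{1}-\rho_{2})+\tau$ for every $\sigma\in\Gamma$; hence $\bigl|\exp(-\tfrac{s(\rho_{1}-\rho_{2})+\tau}{2}(\sigma+\sigma^{-1}))\bigr|\le e^{-(s_{1}(\rho_{1}-\rho_{2})+\tau)}\le e^{-\tau}\le e^{-\tau(1-\rho_{2}^{2}/\rho_{1}^{2})}$ uniformly on $\Gamma$. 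Pulling this bound out of the integral yields
$$\bigl|K_{\nu}(s\rho_{1}+\tau)\bigr|\le e^{-\tau(1-\rho_{2}^{2}/\rho_{1}^{2})}\cdot\frac12\int_{\Gamma}\Bigl|\exp\!\bigl(-\tfrac{s\rho_{2}}{2}(\sigma+\sigma^{-1})\bigr)\sigma^{-\nu-1}\Bigr|\,|d\sigma| .$$

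What remains — and where I expect the real difficulty to lie — is to show that the last integral is controlled by $|K_{\nu}(s\rho_{2})|$ up to an absolute constant (ideally equal to it). That integral is the $L^{1}$-norm of the integrand, whereas $|K_{\nu}(s\rho_{2})|$ is the modulus of its integral, and the two differ because $\sigma^{-\nu-1}$ makes the integrand oscillate away from the saddle; they coincide only if $\Gamma$ is the steepest-descent contour of the \emph{full} integrand $\exp(-\tfrac{s\rho_{2}}{2}(\sigma+\sigma^{-1}))\,\sigma^{-\nu-1}$, whose saddle drifts to $\approx \tfrac{s\rho_{2}}{2(\nu+1)}$ as $\nu$ grows, and on that contour the clean bound of the previous paragraph is no longer exact. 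Reconciling the two requirements — equivalently, producing a lower bound for $|K_{\nu}(s\rho_{2})|$ that is uniform in $\nu$ and in $s_{2}\in\R$ — is the heart of the proof, and it is precisely at this point that one concedes the factor $\rho_{2}^{2}/\rho_{1}^{2}$ in place of the sharper $\rho_{1}/\rho_{2}$ that the leading saddle contribution alone would suggest; this estimate is carried out in \cite[Lemma~5.1]{Chen09}, which I would invoke to close the argument.
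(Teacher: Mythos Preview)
The paper does not give its own proof of this lemma: it simply records that the estimate ``has been proved in \cite[Lemma~5.1]{Chen09}'' and cites it. Your proposal ultimately does the same --- you invoke \cite[Lemma~5.1]{Chen09} to close the argument --- so at the level of what is actually \emph{proved} here, you and the paper agree.

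That said, the sketch you offer en route has a genuine gap, and your diagnosis of it is not quite right. After pulling the factor $\exp\bigl(-\tfrac{s(\rho_1-\rho_2)+\tau}{2}(\sigma+\sigma^{-1})\bigr)$ out of the integral on the steepest-descent contour $\Gamma$, you are left with
\[
\frac12\int_{\Gamma}\Bigl|\exp\!\bigl(-\tfrac{s\rho_{2}}{2}(\sigma+\sigma^{-1})\bigr)\,\sigma^{-\nu-1}\Bigr|\,|d\sigma|,
\]
and you need this to be bounded by $|K_\nu(s\rho_2)|$. But the $L^{1}$-norm of an integrand dominates the modulus of its integral, not the other way around; this inequality points the wrong direction and cannot be repaired by choice of contour alone (on any $\Gamma$ the oscillation of $\sigma^{-\nu-1}$ for large $\nu$ makes the discrepancy real). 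Your suggestion that this is ``precisely the point where one concedes the factor $\rho_2^2/\rho_1^2$'' is not correct: your contour estimate already produced the stronger bound $e^{-\tau}$, and weakening the exponent by a constant in $(0,1)$ does nothing to fix an inequality going the wrong way. The factor $(1-\rho_2^2/\rho_1^2)$ is not a concession to this $L^1$-versus-modulus issue; it arises in \cite{Chen09} from a different mechanism --- a monotonicity/differential-inequality argument for $|K_\nu(z)|$ along a path in the right half-plane, where the loss reflects the geometry of the path rather than any saddle-point bookkeeping. If you want to present an argument rather than a bare citation, that is the line to follow.
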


\end{appendices}

\section*{Acknowledgements}
G. Hu is partially supported by the National Natural Science Foundation of China (No. 12071236) and the Fundamental Research Funds for Central Universities in China (No. 63213025).


\begin{thebibliography}{99}

\bibitem{Amm00} H. Ammari, G. Bao, and A. Wood, An integral equation method for the electromagnetic  scattering from cavities, Math. Methods Appl. Sci., 23 (2000), pp. 1057–1072.

\bibitem{Amm03} H. Ammari, G. Bao, and A. Wood, Analysis of the electromagnetic scattering from a cavity, Jpn. J. Ind. Appl. Math., 19 (2002), pp. 301–310.

\bibitem{appelo06}D. Appelo, T. Hagstrom, and G. Kreiss, Perfectly matched layers for hyperbolic systems: General formulation, well-posedness, and stability, SIAM J. Appl. Math., 67 (2006), pp. 1–23.


\bibitem{Bao05} G. Bao, and H. Wu, Convergence analysis of the PML problems for time-harmonic Maxwell’s equations, SIAM J. Numer. Anal., 43(2005), pp. 2121–2143.


\bibitem{bao18}G. Bao, Y. Gao, and P. Li, Time-domain analysis of an acoustic-elastic interaction problem, Arch. Rational Mech. Anal., 229(2018), pp. 835–884.

\bibitem{Bao18}G. Bao, Y. Gao, and P. Li, Time-domain analysis of an acoustic-elastic interaction problem, Arch. Rational Mech. Anal. 229 (2018), PP. 835-884.




\bibitem{Ber94} J. P. B\'{e}renger, A perfectly matched layer for the absorption of electromagnetic waves, J. Comput. Phys., 114(1994), pp. 185–200.

\bibitem{Ber96} J. P. B\'{e}renger, Three-dimensional perfectly matched layer for the absorption of electromagnetic waves, J. Comput. Phys., 127(1996), pp. 363–379.


\bibitem{joly03} E. B\'{e}cache, S. Fauqueux, and P. Joly, Stability of perfectly matched layers, group velocities and anisotropic waves, J. Comput. Phys., 188 (2003), pp. 399–433.

\bibitem{joly02} E. B\'{e}cache and P. Joly, On the analysis of B\'{e}renger’s perfectly matched layers for Maxwell equations, M2AN Math. Model. Numer. Anal., 36 (2002), pp. 87–119.




\bibitem{Becache21}E. B\'{e}cache and M. Kachanovska, Stability and convergence analysis of time-domain perfectly matched layers for the wave equation in waveguides, SIAM J. Numer. Anal. , 59(2021), pp.2004–2039.





\bibitem{Bram} J. H. Bramble, and J. E. Pasciak, Analysis of a finite PML approximation for the three dimensional time-harmonic Maxwell and acoustic scattering problems,  Math. Comp. 76 (2007), pp. 597–614.


\bibitem{Bram08}J. H. Bramble, and J. E. Pasciak, Analysis of a finite element PML approximation for the three dimensional time-harmonic Maxwell problem, Math. Comp., 77 (2008), pp. 1–10.

\bibitem{Bramble}J. H. Bramble, and J. E. Pasciak, Analysis of a Cartesian PML approximation to acoustic scattering problems in $\R^2$  and $\R^ 3$, Math. Comp., 247 (2013), pp. 209-230.


\bibitem{Func}H. Brezis, Functional analysis, sobolev spaces and partial differential equations, Springer, 2010.



\bibitem{Chew94} W. Chew, and W.  Weedon, A 3-D perfectly matched medium from modified Maxwell's equation with stretched coordinates, Microw. Opt. Tech. Lett.,7(1994), pp. 599-604.



\bibitem{Cheny} Y. Chen, P. Li, and X. Yuan, An adaptive finite element PML method for the open cavity scattering problems, Commun. Comput. Phys., 29 (2021), pp. 1505-1540.


\bibitem{Chen10-fre} Z. Chen, and W. Zheng, Convergence of the uniaxial perfectly matched layer method for time- harmonic scattering problems in two-layered media, SIAM J. Numer. Anal., 48 (2010), pp. 2158-2185.


\bibitem{Chen09} Z.Chen, Convergence of the time-domain perfectly matched layer method for acoustic scattering problems, International Journal of Numerical Analysis and Modeling, 6 (2009), pp. 124--146.

\bibitem{Chen12}Z. Chen, and X. Wu, Long-time stability and convergence of the uniaxial perfectly matched layer method for time-domain acoustic scattering problems, SIAM J. Numer. Anal. 50(2012), pp. 2632-2655.




\bibitem{Nedelec} Z. Chen and J. Nedelec, On Maxwell equations with the transparent boundary condition, J. Comput. Math., 26(2008), pp. 284-296.

\bibitem{Cohen} A. M. Cohen, Numerical methods for Laplace Transform inversion, Numer. Methods Algorithms, 5, Springer, New York, 2007.



\bibitem{Coll} F. Collino, and, P.  Monk, The perfectly matched layer in curvilinear coordinates, SIAM J.Sci. Comput. 19(1998), pp. 1061–1090.

\bibitem{Joly06} J. Diaz, and, P. Joly,  A time domain analysis of PML models in acoustics.Comput. Methods Appl. Mech. Eng. 195(2006), pp. 3820–3853.




\bibitem{Hohage}T. Hohage, F. Schmidt, and L. Zschiedrich, Solving time-harmonic scattering problems based on the pole condition II: Convergence of the PML method, SIAM J. Math. Anal., 35(2003), pp. 547-560.


\bibitem{hoop}A. T. De Hoop, P. M. Van den Berg, and,  R. F. Remis, Absorbing boundary conditions and perfectly matched layers-analytic time-domain performance analysis, IEEE Trans. Magn. 38(2002), 657–660.


\bibitem{Joly03} P. Joly, Variational methods for time-dependent wave propagation problems, Lecture Notes in Computational Science and Engineering, 31(2003), pp. 201--264.

\bibitem{Joly12} P. Joly, An elementary introduction to the construction and the analysis of perfectly matched layers for time domain wave propagation, SeMA Journal, 57(2012), pp. 5-48.









\bibitem{Lassas}M. Lassas, and E. Somersalo, On the existence and convergence of the solution of PML equations, Computing, 60 (1998), pp. 229-241.


\bibitem{Lidtn} P. Li, A survey of open cavity scattering problems, J. Comp. Math., 36 (2018), pp. 1–16.


\bibitem{li15}P. Li, L. Wang, and A. Wood, Analysis of transient electromagnetic scattering from a three-dimensional open cavity, SIAM J. Appl. Math., 75(2015), pp. 1674-1699.

\bibitem{LM72}J. L. Lions and E. Magenes, Non-Homogeneous Boundary Value Problems and Applications,
Vol. I, Springer-Verlag, New York-Heidelberg, 1972.

\bibitem{Mclean}W. Mclean, Strongly elliptic system and boundary integral equations, Cambridge University Press, 2000.



\bibitem{Petropoulos}P. G. Petropoulos, Reflectionless sponge layers as absorbing boundary conditions forthe numerical solutions of Maxwellequation in Rectangular, cylindrical, and sopherical coordinates, SIAM J. Appl. Math., 60(2000), PP. 1037-1058.






\bibitem{Treves} F. Tr\'{e}ves,  Basic linear partial differential equations, Ann. of Math., 47(1975), pp.  202–212.




\bibitem{Tur} E. Turkel, and, A. Yefet,  Absorbing PML boundary layers for wave-like equations. Appl. Numer. Math. 27(1998), pp. 533–557.



\bibitem{wei19}C. K.  Wei, and J. Q. Yang,  Analysis of a time-dependent fluid-solid interaction problem above a local rough surface, Sci China Math, 62 (2019).


\bibitem{Wei20}C. K. Wei, J. Q. Yang, and B. Zhang, Convergence analysis of the PML method for time-domain electromagnetic scattering problems, SIAM J. Numer. Anal, 58(2020), pp. 1918-1940.

\bibitem{Wood} A. Wood, Analysis of electromagnetic scattering from an overfilled cavity in the ground plane, J.Comput. Phys., 215 (2006), pp. 630–641.




\end{thebibliography}
\end{document}